\pdfoutput=1
%
\documentclass{article}
\usepackage{excludeonly}

\usepackage[draft]{todonotes}





\usepackage{savesym}
%
%
\usepackage{amsthm,amsmath}
%
\usepackage{amssymb,latexsym,graphicx}
%
\usepackage{amscd}
 \usepackage[all,cmtip]{xy}
\usepackage{tikz-cd}

%
\usepackage{accents}
\usepackage{cite}
\usepackage{mathtools}
\usepackage{stmaryrd} 


\usepackage{calligra}
\DeclareMathAlphabet{\mathcalligra}{T1}{calligra}{m}{n}
%
%
\DeclareMathAlphabet{\mathpzc}{OT1}{pzc}{m}{it}
%

%
%
\usepackage{hycolor}
\usepackage{xcolor}
\usepackage[
                       colorlinks=true,
                       linkcolor=black, 
                       citecolor=black, 
                       urlcolor=blue,
%
                     ]{hyperref}
%
\usepackage{soul} 

%


\usepackage{makeidx}

\usepackage[intoc]{nomentbl}
\makenomenclature

\setlength{\nomitemsep}{-\parsep}
%
%
%
%
%
\usepackage{stackengine} 
\usepackage{booktabs} 

%
%
%
%
%
%
%

%
%




\newtheorem{theorem}{Theorem}[section]
\newtheorem{corollary}[theorem]{Corollary}

\newtheorem{lemma}[theorem]{Lemma}
\newtheorem{proposition}[theorem]{Proposition}

\theoremstyle{definition}
\newtheorem{definition}[theorem]{Definition}

\newtheorem{remark}[theorem]{Remark}

\newtheorem{example}[theorem]{Example}

\theoremstyle{remark}

%
%




%
%

%
%
%

\newcommand{\C}{{\mathbb{C}}}

\newcommand{\N}{{\mathbb{N}}}

\newcommand{\R}{{\mathbb{R}}}
\renewcommand{\SS}{{\mathbb{S}}}

\newcommand{\Z}{{\mathbb{Z}}}
%
%
%
\newcommand{\Aa}{{\mathcal{A}}}   
\newcommand{\Bb}{{\mathcal{B}}}

\newcommand{\Ee}{{\mathcal{E}}}


\newcommand{\Ll}{{\mathcal{L}}}   

%
%
%

       %
       %
%
%
%

%
%
%
\newcommand{\im}{{\rm im\, }}             
\newcommand{\SPAN}{{\rm span\, }}         
\newcommand{\id}{{\rm id}}                
\newcommand{\Id}{{\rm Id}}
\newcommand{\Map}{{\rm Map}}          

\newcommand{\norm}{{\rm norm}}

\newcommand{\eps}{{\varepsilon}}


%

%



%
%
   
    
%

%

%


%
  
%
\newcommand{\inner}[2]{\langle #1, #2\rangle}   
\newcommand{\INNER}[2]{\left\langle #1, #2\right\rangle}

%

\newcommand{\SC}{{\mathrm{sc}}}
\newcommand{\SCz}{{\mathrm{sc}^0}}
\newcommand{\SCo}{{\mathrm{sc}^1}}
\newcommand{\EM}{{\mathrm{M}}}

\newcommand{\mbf}[1]{\text{\boldmath $#1$}}  

%

%

%

%

%
\def\NABLA#1{{\mathop{\nabla\kern-.5ex\lower1ex\hbox{$#1$}}}}
\def\Nabla#1{\nabla\kern-.5ex{}_{#1}}
\def\Tabla#1{\Tilde\nabla\kern-.5ex{}_{#1}}
\def\abs#1{\mathopen|#1\mathclose|}   
\def\Abs#1{\left|#1\right|}            
\def\norm#1{\mathopen\|#1\mathclose\|}
\def\Norm#1{\left\|#1\right\|}

\renewcommand{\Tilde}{\widetilde}


%

\newcommand{\INTO}{\hookrightarrow}              

%












\newlength\eqshift
\setlength\eqshift{\widthof{)}}
\renewcommand\theequation{\thesection.\arabic{equation}}
\let\savetheequation\theequation



\makeatletter
\renewcommand*\env@matrix[1][\arraystretch]{%
  \edef\arraystretch{#1}%
  \hskip -\arraycolsep
  \let\@ifnextchar\new@ifnextchar
  \array{*\c@MaxMatrixCols c}}
\makeatother


\makeatletter
\let\save@mathaccent\mathaccent
\newcommand*\if@single[3]{%
  \setbox0\hbox{${\mathaccent"0362{#1}}^H$}%
  \setbox2\hbox{${\mathaccent"0362{\kern0pt#1}}^H$}%
  \ifdim\ht0=\ht2 #3\else #2\fi
  }
\newcommand*\rel@kern[1]{\kern#1\dimexpr\macc@kerna}
\newcommand*\widebar[1]{\@ifnextchar^{{\wide@bar{#1}{0}}}{\wide@bar{#1}{1}}}
\newcommand*\wide@bar[2]{\if@single{#1}{\wide@bar@{#1}{#2}{1}}{\wide@bar@{#1}{#2}{2}}}
\newcommand*\wide@bar@[3]{%
  \begingroup
  \def\mathaccent##1##2{%
    \let\mathaccent\save@mathaccent
    \if#32 \let\macc@nucleus\first@char \fi
    \setbox\z@\hbox{$\macc@style{\macc@nucleus}_{}$}%
    \setbox\tw@\hbox{$\macc@style{\macc@nucleus}{}_{}$}%
    \dimen@\wd\tw@
    \advance\dimen@-\wd\z@
    \divide\dimen@ 3
    \@tempdima\wd\tw@
    \advance\@tempdima-\scriptspace
    \divide\@tempdima 10
    \advance\dimen@-\@tempdima
    \ifdim\dimen@>\z@ \dimen@0pt\fi
    \rel@kern{0.6}\kern-\dimen@
    \if#31
      \overline{\rel@kern{-0.6}\kern\dimen@\macc@nucleus\rel@kern{0.4}\kern\dimen@}%
      \advance\dimen@0.4\dimexpr\macc@kerna
      \let\final@kern#2%
      \ifdim\dimen@<\z@ \let\final@kern1\fi
      \if\final@kern1 \kern-\dimen@\fi
    \else
      \overline{\rel@kern{-0.6}\kern\dimen@#1}%
    \fi
  }%
  \macc@depth\@ne
  \let\math@bgroup\@empty \let\math@egroup\macc@set@skewchar
  \mathsurround\z@ \frozen@everymath{\mathgroup\macc@group\relax}%
  \macc@set@skewchar\relax
  \let\mathaccentV\macc@nested@a
  \if#31
    \macc@nested@a\relax111{#1}%
  \else
    \def\gobble@till@marker##1\endmarker{}%
    \futurelet\first@char\gobble@till@marker#1\endmarker
    \ifcat\noexpand\first@char A\else
      \def\first@char{}%
    \fi
    \macc@nested@a\relax111{\first@char}%
  \fi
  \endgroup
}
\makeatother

\long\def\symbolfootnote[#1]#2{\begingroup%
\def\thefootnote{\fnsymbol{footnote}}\footnote[#1]{#2}\endgroup}


%
%
%
%
%
%


\begin{document}
\sloppy

\author{\quad Urs Frauenfelder \quad \qquad\qquad
             Joa Weber
        %
        %
    \\
    Universit\"at Augsburg \qquad\qquad
    UNICAMP
}

\title{The shift map on Floer trajectory spaces}

\date{\today}

\maketitle 
%


%
%

%





\begin{abstract}
In this article we give a uniform proof
why the shift map on Floer homology trajectory spaces
is scale smooth.
This proof works for various Floer homologies, periodic,
Lagrangian, Hyperk\"ahler, elliptic or parabolic, and uses
Hilbert space valued Sobolev theory.
\end{abstract}

\tableofcontents

\section{Introduction}

In Morse and Floer homology gradient flow lines
play a crucial role.
These locally lie in spaces of maps
$\R\to S$ of the real line into a vector space.
The shift map $\Psi:\R\times \Map(\R,S)\to \Map(\R,S)$ is
defined by $(\tau,v)\mapsto \tau_* v$, where $(\tau_*v)(t)=v(\tau+t)$
for $t\in\R$. After endowing the mapping space with some topology
the shift map has terrible properties.
\newline
By differentiating the shift map with respect to the first variable
one looses a derivative. Moreover, the shift map is merely continuous
in the compact open topology, but not in the norm topology.
In the last two decades this led Hofer, Wysocki, and
Zehnder~\cite{Hofer:2017a} to the discovery and exploration of a new
notion of smoothness in infinite dimensions called
\emph{scale smoothness} or \emph{$\SC$-smoothness}.
See also the article by Fabert et al~\cite{Fabert:2016a}
for the crucial importance of the new notion in various
fields of symplectic topology or the article by 
Hofer~\cite{Hofer:2017c} for a survey.

For Morse homology the vector space $S$ is finite
dimensional. However, for Floer homology the vector space $S$
will be infinite dimensional. 

Floer theory arises in many features.
In the study of periodic orbits of Hamiltonian systems
one looks at a closed string version of Floer homology,
namely, periodic Floer homology~\cite{floer:1988c,floer:1989a}.
In this case the vector space consists of loops
in a finite dimensional symplectic vector space $S$.
The study of gradient flow lines is based on the study of an elliptic
pde on the cylinder.

In the study of Lagrangian intersection points
one looks at an open string analogon, namely
Floer homology with Lagrangian boundary conditions~\cite{floer:1988a}.
In this case the vector space $S$ consists of paths
in a symplectic vector space which start and end
in a Lagrangian subspace.
The study of gradient flow lines is based on the study of an elliptic
pde on the strip.

The second author established Morse homology for the
heat flow~\cite{weber:2013a,weber:2013b}
which led to the study of a parabolic pde on the cylinder.
This was an essential ingredient in the joint proof
with Dietmar Salamon~\cite{salamon:2006a}
of the famous Viterbo isomorphism~\cite{Viterbo:1998a}.

Motivated by Donaldson-Thomas gauge theory
in higher dimensions~\cite{donaldson:1998a}
Hohloch, Noetzel, and Salamon~\cite{hohloch:2009a}
discovered a hyperk\"ahler version of Floer homology
which leads to dynamics in higher dimensional time,
see as well Ginzburg and Hein~\cite{Ginzburg:2012a}.
In this setup the vector space $S$ consists of maps
from a three-dimensional closed manifold into
hyperk\"ahler space.

Although the shift map has terrible properties in
the first variable, it is quite innocent in the second variable,
namely, it is \emph{linear}. People familiar with Sobolev
theory~\cite{Adams:2003a} know what a pain products are.
Thanks to linearity in the second variable
this difficulty is absent.

It is generally believed that the moduli space of (unparametrized)
gradient flow lines, namely gradient trajectories modulo shift,
can be interpreted as the zero set of a section from an $\SC$-manifold
into an $\SC$-bundle over it.
If the gradient flow lines are allowed to be broken, then the
$\SC$-manifold has to be replaced by an $\EM$-polyfold.
In the Morse case this is explained by Albers and
Wysocki~\cite{Albers:2013b}.
See as well Wehrheim~\cite{Wehrheim:2012b}
for the case of periodic Floer homology.

A crucial ingredient to construct this
$\SC$-manifold is the scale smoothness of the shift map.
In view of the various Floer homologies defined on different spaces
displaying elliptic and parabolic features one might worry
that this property has to be proved for each Floer theory
individually.
The purpose of this paper is to give a uniform
treatment of the scale smoothness of the shift map
which is applicable for all the above mentioned Floer homologies.
This crucially uses the linearity of the shift map
in the second variable.

In order to formulate this uniform treatment we use
Hilbert space valued Sobolev spaces.
This idea is not new in Floer homology, but has
sucessfully been used by Robbin and Salamon\cite{robbin:1995a}
in the treatment of the spectral flow.
Moreover, this gives interesting connections between
Floer homology and interpolation theory.
For a detailled treatment of interpolation theory
see e.g.~\cite{Triebel:1978a}.

The philosophy behind this comes from scale Morse homology,
namely, a still to be developped Morse homology on scale manifolds.
Scale Morse homology not only gives a unified treatment
of various topics in Floer homology like gluing,
spectral flow, and exponential decay~\cite{albers:2013a},
but due to its non-local character scale Morse homology
leads to so far unexplored applications to delay equations,
as discussed by the first author jointly with
Peter Albers and Felix Schlenk~\cite{Albers:2018b,Albers:2018c}.

\vspace{.2cm}
\textit{This paper is organized as follows.}
In Section~\ref{sec:shift-loop} we explain that the shift map
is continuous in the compact open topology, but fails to be continuous
in the operator topology.
The strange behavior of the shift map was one of the main
reasons for Hofer, Wysocki, and Zehnder to introduce
scale smoothness whose definition we recall in
Section~\ref{sec:scale-smoothness}.
In order to explain scale smooth one needs the notion
of a scale structure which we recall in Section~\ref{sec:scale-structures}.
In Section~\ref{sec:scale-structures} we also introduce the examples
of scale structures which are relevant in Floer homology.
The examples use Hilbert valued Sobolev theory.
The proof that these examples actually satisfy the axioms
of a scale structure is carried out in Section~\ref{sec:Bsc-str-ex}.
The importance of having a scale structure is that this
guarantees that the chain rule holds.
We recall the chain rule in Section~\ref{sec:chain-rule}.
In Section~\ref{sec:scale-actions}
we give a uniform proof that the shift map is $\SC$-smooth
for the trajectory spaces relevant in the various types of Floer
homologies.
The reason why the trajectory spaces
which we introduce in Section~\ref{sec:scale-structures}
is explained in Section~\ref{sec:mapping-spaces}.

\vspace{.2cm}
\textit{Acknowledgements.}
This article was written during the stay of the first author
at the Universidade Estadual de Campinas (UNICAMP) whom the first
author would like to thank for hospitality.
His visit was supported by UNICAMP and Funda\c{c}\~{a}o de Amparo
\`{a} Pesquisa do Estado de S\~{a}o Paulo (FAPESP),
processo $\mathrm{n}^{\rm o}$ 2017/19725-6.


\section{Shift map on loop spaces}\label{sec:shift-loop}

Throughout we identify the unit circle $\SS^1$ with the quotient space
$\R/\Z$. To indicate that a function $v:\R\to\R$ has the property of being
$1$-periodic, that is $v(t+1)=v(t)$ for every $t\in\R$, we use the
notation $v:\SS^1\to\R$.

As a warmup we discuss in this section 
the shift map on the loop space $H=L^2(\SS^1,\R)$.
For $\tau\in\SS^1$ define the \textbf{shift map}
\[
     \Psi_\tau:H\to H,\quad v\mapsto \tau_* v
\]
where $(\tau_* v)(t):=v(t+\tau)$. Observe that $\Psi_\tau$ is linear
and an isometry
\[
     \Norm{\Psi_\tau v}_{H}=\norm{v}_H,\quad
     \tau\in\SS^1,\, v\in H.
\]

\begin{lemma}[Continuity in compact open topology]
\label{le:shift-C0-compopen}
As $\tau$ goes to zero, $\Psi_\tau$ converges to
$\Psi_0=\id$ in the \textbf{compact open topology}, i.e.
for each $v\in H$ it holds that
\[
     \lim_{\tau\to 0}\Norm{\Psi_\tau(v)-v}_H=0.
\]
\end{lemma}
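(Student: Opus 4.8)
The plan is to exploit two features of $\Psi_\tau$ that are already visible in the setup: it is a \emph{linear isometry} of $H$, uniformly in $\tau$, and the asserted convergence is elementary on a dense subspace. This is the classical statement that translation is strongly continuous on $L^2$, and the natural route is an $\eps/3$ argument in which density does the real work and the isometry property absorbs the approximation error.

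First I would prove the claim on the dense subspace $\Cinf(\SS^1,\R)\subset H$ of smooth $1$-periodic functions. For such a $w$, uniform continuity on the compact circle gives $\sup_{t\in\R}\Abs{w(t+\tau)-w(t)}\to 0$ as $\tau\to 0$; since $\SS^1$ has finite total measure, the $L^2$-norm is dominated by the supremum norm, so $\Norm{\Psi_\tau w-w}_H\to 0$. (Merely continuous functions would already suffice here, but smooth ones are equally dense in $H$ and convenient.)

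Next, for an arbitrary $v\in H$ and a given $\eps>0$, I would choose $w\in\Cinf(\SS^1,\R)$ with $\Norm{v-w}_H<\eps/3$, which is possible by density, and then split
\[
\Norm{\Psi_\tau v-v}_H\le\Norm{\Psi_\tau(v-w)}_H+\Norm{\Psi_\tau w-w}_H+\Norm{w-v}_H.
\]
The first term equals $\Norm{v-w}_H<\eps/3$ because $\Psi_\tau$ is an isometry, and the last term is $<\eps/3$ by the choice of $w$. For the middle term I invoke the previous step: there is a $\delta>0$ such that $\Abs{\tau}<\delta$ forces $\Norm{\Psi_\tau w-w}_H<\eps/3$. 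Combining the three estimates yields $\Norm{\Psi_\tau v-v}_H<\eps$ for all $\Abs{\tau}<\delta$, which is exactly the assertion.

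The only point worth flagging is that this convergence is genuinely pointwise in $v$ and cannot be upgraded to uniformity over the unit ball of $H$ — indeed the later sections show that $\Psi_\tau$ does \emph{not} converge to $\id$ in the operator norm. Thus the argument must be run one $v$ at a time, with the dense-subspace step supplying the convergence and the isometry property controlling the difference between $v$ and its smooth approximant. There is no serious obstacle; the entire content is the familiar interplay of uniform boundedness and density.
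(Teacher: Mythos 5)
Your proof is correct and follows essentially the same route as the paper's: approximate $v$ by a smooth function, use uniform continuity on the circle to handle the smooth approximant, and absorb the two error terms via the triangle inequality together with the fact that $\Psi_\tau$ is a linear isometry. The only cosmetic difference is that the paper phrases the density step through a sequence $v_\nu\to v$ rather than choosing a single $w$ with $\Norm{v-w}_H<\eps/3$.
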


\begin{proof}
Since $C^\infty(\SS^1,\R)$ is dense in $L^2(\SS^1,\R)$ there is a
sequence $v_\nu\in C^\infty(\SS^1,\R)$ such that $v_\nu\to v$ in $L^2$.
Choose $\eps>0$.
Because $v_\nu$ converges to $v$ in $L^2$,
there is $\nu_0\in \N$ such that
\begin{equation}\label{eq:1}
     \norm{v_\nu-v}_{H}\le\eps/3
\end{equation}
whenever $\nu\ge\nu_0$
Since $v_{\nu_0}$ is smooth and uniformly continuous
there is a time $\tau_0>0$ such that for all $\tau\in[0,\tau_0]$
it holds that
\[
     \Abs{v_{\nu_0}(t+\tau)-v_{\nu_0}(t)}\le\eps/3,\quad t\in\SS^1.
\]
We estimate for $0\le\tau\le\tau_0$
\begin{equation}\label{eq:2}
\begin{split}
     \Norm{\Psi_\tau(v_{\nu_0})-v_{\nu_0}}_{H}
   &=\sqrt{\int_0^1\Abs{v_{\nu_0}(t+\tau)-v_{\nu_0}(t)}^2\, dt}\\
   &\le \sqrt{\int_0^1\left(\eps/3\right)^2\, dt}\\
   &=\eps/3.
\end{split}
\end{equation}
Combining~(\ref{eq:1}) and~(\ref{eq:2}) we estimate
\begin{equation*}
\begin{split}
     \Norm{\Psi_\tau(v)-v}_{H}
   &\le \Norm{\Psi_\tau(v)-\Psi_\tau(v_{\nu_0}) }_{H}
     +\Norm{\Psi_\tau(v_{\nu_0})-v_{\nu_0}}_{H}
     +\Norm{v_{\nu_0}-v}_{H}\\
   &=\Norm{v-v_{\nu_0}}_{H}
     +\Norm{\Psi_\tau(v_{\nu_0})-v_{\nu_0}}_{H}
     +\Norm{v-v_{\nu_0}}_{H}\\
   &=\eps
\end{split}
\end{equation*}
where the second step uses that $\Psi_\tau$ is linear and an isometry.
\end{proof}

\begin{lemma}[Discontinuity in norm topology]
As $\tau$ goes to zero, $\Psi_\tau$ does \underline{not} converge
to $\Psi_0=\id$ in the norm topology. More precisely,
for each $0<\tau\le 1/2$ there is an element $v_\tau\in H$ of norm $1$
and with the property that $\norm{\Psi_\tau(v_\tau)-v_\tau}_H=\sqrt{2}>0$.
\end{lemma}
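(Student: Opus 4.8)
The plan is to exploit the fact, recorded just above the statement, that $\Psi_\tau$ is a \emph{linear isometry} of $H$. For any unit vector $v_\tau$ the quantity to be controlled expands as
\[
  \Norm{\Psi_\tau(v_\tau)-v_\tau}_H^2
  =\Norm{\Psi_\tau(v_\tau)}_H^2-2\inner{\Psi_\tau(v_\tau)}{v_\tau}+\Norm{v_\tau}_H^2
  =2-2\inner{\Psi_\tau(v_\tau)}{v_\tau},
\]
where I used $\Norm{\Psi_\tau(v_\tau)}_H=\Norm{v_\tau}_H=1$. Hence the desired value $\sqrt2$ follows the moment I produce a unit vector $v_\tau$ whose shift $\Psi_\tau(v_\tau)$ is \emph{orthogonal} to $v_\tau$; the conclusion is then just Pythagoras in $H$, with no eigenfunction or Fourier computation needed.

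First I would reduce the whole statement to finding such an orthogonal pair. The natural way to force $\inner{\Psi_\tau(v_\tau)}{v_\tau}=0$ is to concentrate $v_\tau$ on a short arc and arrange that shifting by $\tau$ moves that arc entirely off itself. Concretely I would take the normalized indicator
\[
  v_\tau:=\tfrac{1}{\sqrt\tau}\,\1_{[0,\tau)},
\]
viewed as a $1$-periodic function, so that $\Norm{v_\tau}_H^2=\tfrac1\tau\int_0^\tau dt=1$ and $v_\tau$ has norm $1$ as required.

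The key step is the support computation on the circle. Since $(\Psi_\tau v_\tau)(t)=v_\tau(t+\tau)$ is nonzero exactly where $t+\tau\in[0,\tau)$ modulo $1$, namely on the arc $[1-\tau,1)$, the product $v_\tau(t+\tau)\,v_\tau(t)$ vanishes identically provided $[0,\tau)$ and $[1-\tau,1)$ are disjoint. This is precisely where the hypothesis $0<\tau\le1/2$ enters: it guarantees $\tau\le 1-\tau$, so the two arcs do not overlap (they are at most adjacent, in the limiting case $\tau=1/2$). Therefore $\inner{\Psi_\tau(v_\tau)}{v_\tau}=\int_0^1 v_\tau(t+\tau)\,v_\tau(t)\,dt=0$, and the displayed identity yields $\Norm{\Psi_\tau(v_\tau)-v_\tau}_H=\sqrt2$, which is bounded away from $0$ uniformly in $\tau$, contradicting convergence in the norm topology.

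I expect the only genuinely delicate point to be the bookkeeping of the periodic wrap-around in the support computation, i.e.\ correctly reading off that the shifted bump lands on $[1-\tau,1)$ and recognizing that $\tau\le1/2$ is exactly the disjointness threshold. Everything else, namely the normalization, the isometry property inherited from the preceding discussion, and the Pythagorean identity, is immediate.
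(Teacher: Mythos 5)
Your proof is correct and follows essentially the same route as the paper's: both take a normalized indicator bump on a short arc whose $\tau$-shift has disjoint support (the paper uses height $\sqrt{2/\tau}$ on $[1-\tfrac{\tau}{2},1]$, you use height $1/\sqrt{\tau}$ on $[0,\tau)$). The only difference is cosmetic: the paper writes out $\Psi_\tau(v_\tau)-v_\tau$ pointwise and integrates its square, while you reach $\sqrt{2}$ via the isometry property and the orthogonality $\inner{\Psi_\tau(v_\tau)}{v_\tau}=0$.
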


\begin{proof}
As illustrated by Figure~\ref{fig:fig-Psi_v-v} we define
\begin{equation*}
   v_\tau(t):=
   \begin{cases}
     0&\text{, $t\in(0,1-\frac{1}{2}\tau)$.}\\
     \sqrt{2/\tau}&\text{, $t\in[1-\frac{\tau}{2},1]$,}
   \end{cases}
\end{equation*}
and
\begin{figure}
  \centering
  \includegraphics
                             [height=4cm]
                             {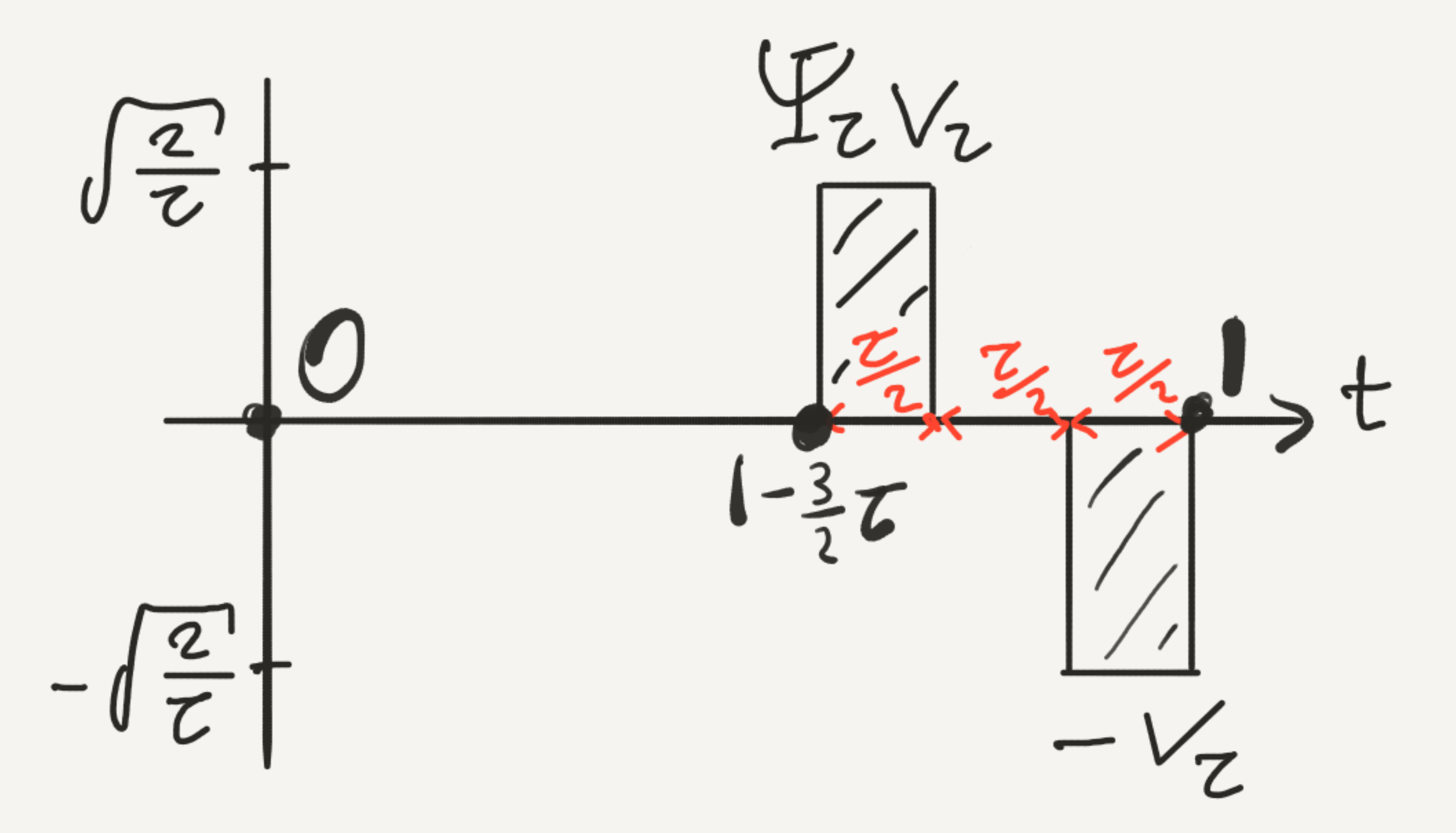}
  \caption{The function $\Psi_\tau(v_\tau)-v_\tau$}
  \label{fig:fig-Psi_v-v}
\end{figure}
compute
\begin{equation*}
\begin{split}
     \Norm{v_\tau}_H
     =\sqrt{\int_0^1\Abs{v_\tau(t)}^2\, dt}
     =\sqrt{\frac{2}{\tau}\,\frac{\tau}{2}}=1.
\end{split}
\end{equation*}
Note that
\begin{equation*}
   \left(\Psi_\tau(v_\tau)-v_\tau\right)(t):=
   \begin{cases}
     \sqrt{2/\tau}&\text{, $t\in[1-\frac{3}{2}\tau,1-\tau]$,}\\
     -\sqrt{2/\tau}&\text{, $t\in[1-\frac{\tau}{2},1]$,}\\
     0&\text{, else.}
   \end{cases}
\end{equation*}
Hence
\begin{equation*}
   \Abs{\left(\Psi_\tau(v_\tau)-v_\tau\right)(t)}^2:=
   \begin{cases}
     2/\tau&\text{, $t\in[1-\frac{3}{2}\tau,1-\tau]\cup[1-\frac{\tau}{2},1]$,}\\
     0&\text{, else.}\\
   \end{cases}
\end{equation*}
We calculate
\begin{equation*}
\begin{split}
     \Norm{\Psi_\tau(v_\tau)-v_\tau}_H
     =\sqrt{\int_0^1
     \Abs{\left(\Psi_\tau(v_\tau)-v_\tau\right)(t)}^2\, dt
     }
     =\sqrt{\frac{2}{\tau}\,(\tau/2+\tau/2)}=\sqrt{2}.
\end{split}
\end{equation*}
\end{proof}

\section{Scale structures}\label{sec:scale-structures}

Scale structures were introduced by Hofer, Wysocki, and
Zehnder~\cite{Hofer:2007a,Hofer:2017c,Hofer:2017a}.
We first recall its definition and then we discuss the
main examples relevant in Morse and Floer homology.
That these examples satisfy the conditions of a scale structure
is proved in Section~\ref{sec:Bsc-str-ex}.
The examples for Floer homology use Hilbert space
valued Sobolev theory motivated by the paper
of Robbin and Salamon on the spectral flow~\cite{robbin:1995a}.

Let $(E,\norm{\cdot}_E)$ be a Banach space.

\begin{definition}\label{def:Banach-scale}
A \textbf{scale-structure} on $E$, also called an
\textbf{sc-structure} or a \textbf{Banach scale}, is a nested sequence
$E=:E_0\supset E_1\supset E_2\supset\dots$ of Banach spaces
meeting the following requirements:
\begin{itemize}
\item[(i)]
   Each level includes compactly into the previous one, i.e. the
   linear operator given by inclusion
  $E_{i+1}\INTO E_i$ is compact for each $i\in\N_0$.
\item[(ii)]
  The intersection $E_\infty:=\cap_{i\ge 0} E_i$ is dense in each level $E_i$.
\end{itemize}
In this case one calls $E$ a scale Banach space
and also writes $E=(E_i)_{i\in{\N_0}}$.
\end{definition}

\begin{remark}\label{rem:dense}
a) It follows from~(ii) that the inclusions
$E_{i+1}\INTO E_i$ are dense for all $i\in\N_0$.
b) The intersection $E_\infty$ carries the structure of a Fr\'{e}chet space.
\end{remark}

\begin{definition}[Shifted scale Banach space]\label{def:shift-scBsp}
Given a scale Banach space $E$ and $m\in\N_0$, then one defines
the scale Banach space $E^m$ by
\[
     (E^m)_k:=E_{k+m}.
\]
\end{definition}

\begin{definition}[Scale direct sum]
If $E$ and $F$ are scale Banach spaces one defines their
direct sum as the scale Banach space $E\oplus F$
whose levels are given by
\[
     (E\oplus F)_k:=E_k\oplus F_k.
\]
\end{definition}

\begin{definition}[Scale isomorphism]
A map $I:E\to F$ between scale Banach spaces
is called a \textbf{scale morphism}, or an
\textbf{\boldmath$\SC$-morphism},
if the restriction to each level $E_k$
takes values in $F_k$ and
\[
     I_k:=I|_{E_k}:E_k\to F_k
\]
is linear and continuous.
A scale morphism
is called a \textbf{scale isomorphism}, or an
\textbf{\boldmath$\SC$-isomorphism},
if its restriction $I_k$ to each level $E_k$ is bijective.
Note that by the open mapping theorem
if $I$ is a scale isomorphism its inverse is a scale
isomorphism as well.
Two scale Banach spaces are called \textbf{scale isomorphic}
if there exists a scale isomorphism between them.
\end{definition}

\subsection*{Examples}

\begin{example}[Finite dimension]
If the Banach space $E$ is of finite dimension, then
property (ii) implies that the scale-structure is constant
$E=:E_0=E_1=E_2=\dots$.
\end{example}

\begin{remark}[Infinite dimension]
In contrast, if $E$ is infinite dimensional, then the compactness
requirement in property (i) enforces strict inclusions
$E_{i+1}\subsetneq E_i$. Indeed the identity map
on an infinite dimensional Banach space is never compact,
because the unit ball of a Banach space is compact
if and only if the Banach space is finite dimensional.
\end{remark}

Now we introduce the main examples of scale structures
relevant in this text. The proofs that these examples satisfy the
requirements of a scale structure are
given in Section~\ref{sec:Bsc-str-ex}.

\begin{example}[The fractal Hilbert scales $\ell^{2,f}$]
\label{eq:sc-ell2}
Given a monotone unbounded function $f:\N\to(0,\infty)$,
define the Hilbert space of weighted $\ell^2$-sequences by
\[
     \ell^2_f
     :=\left\{ x=(x_\nu)_{\nu\in\N}\in\ell^2\,\bigg|\,
     \sum_{\nu=1}^\infty f(\nu) x_\nu^2<\infty\right\}.
\]
The inner product on $\ell^2_f$ is given by
\[
     \INNER{x}{y}_f:=\sum_{\nu=1}^\infty f(\nu) x_\nu y_\nu.
\]
We obtain an sc-structure on $H=\ell^2$
by using the Hilbert spaces
\begin{equation}\label{eq:ell-2-f-k}
     H_k:=(\ell^{2,f})_k:=\ell^2_{f^k},\quad k\in\N_0.
\end{equation}

Denote by $e_i=(0,\dots,0,1,0,\dots)$ the sequence whose members are
all $0$ except for member $i$ which is $1$.
The set $\Ee$ of all $e_i$ not only forms an orthonormal basis of the Hilbert
space $H_0=\ell^2$, but simultaneously an orthogonal basis of all
spaces $H_k=\ell^2_{f^k}$. Rescaling provides an orthonormal basis
\[
     \Ee_{f^k}:=\{e_{i, f^k}\mid i\in \N\},\quad
     e_{i, f^k}:=\frac{1}{f(i)^{k/2}}\, e_i ,
\]
of $H_k$. The isometric Hilbert space isomorphism obtained by
identifying the canonical orthonormal bases, namely
\[
     \phi_k:H_0\to H_k,\quad
     e_i\mapsto e_{i, f^k},
\]
induces a levelwise-isometric $\SC$-isomorphism
\[
     \phi_k:H^0\to H^k.
\]
This means that the restriction to the $i^{\rm th}$ level
\[
     \phi_k|_{(H^0)_i}: (H^0)_i=H_i\to
     (H^k)_i=H_{k+i}
\]
is an isometric Hilbert space isomorphism,
and this is true for every level $i\in \N_0$.
This explains the term \textbf{fractal}
since as a consequence each of the Banach scales $H^j$
is self-similar to any $H^k$.
The fractal scales $\ell^{2,f}$ are intensively studied in
interpolation theory; see e.g.~\cite{Triebel:1978a}.

Two monotone unbounded functions $f,g:\N\to(0,\infty)$
are called \textbf{equivalent}
if there is a constant $c>0$ such that $\frac{1}{c} f\le g\le c f$.
Two equivalent weight functions provide equivalent inner products
on the vector space $\ell^2_f=\ell^2_g$.

The \textbf{\boldmath product $f*g$} of two monotone unbounded
functions $f,g:\N\to(0,\infty)$
is the monotone unbounded function $f*g:\N\to(0,\infty)$
whose value at $\nu$ is the $\nu^{\rm th}$ smallest number among the
members of the two lists $(f(1),f(2),\dots)$ and $(g(1),g(2),\dots)$.
The Banach scale associated to $f*g$ is the scale direct sum
\[
     \ell^{2,f*g}=\ell^{2,f}\oplus\ell^{2,g}.
\]
\end{example}

\begin{example}[Path spaces for Morse homology]
\label{eq:path-Morse}
Fix a monotone cutoff function $\beta\in C^\infty(\R,[-1,1])$
with $\beta(s)-1$ for $s\le-1$ and $\beta(s)=1$ for $s\ge 1$.
Fix a constant $\delta>0$ and, see Figure~\ref{fig:fig-Morse-exp-weight},
define a function $\gamma_\delta:\R\to\R$ by
\[
     \gamma_\delta(s):=e^{\delta\beta(s) s}.
\]
\begin{figure}
  \centering
  \includegraphics
                             [height=4cm]
                             {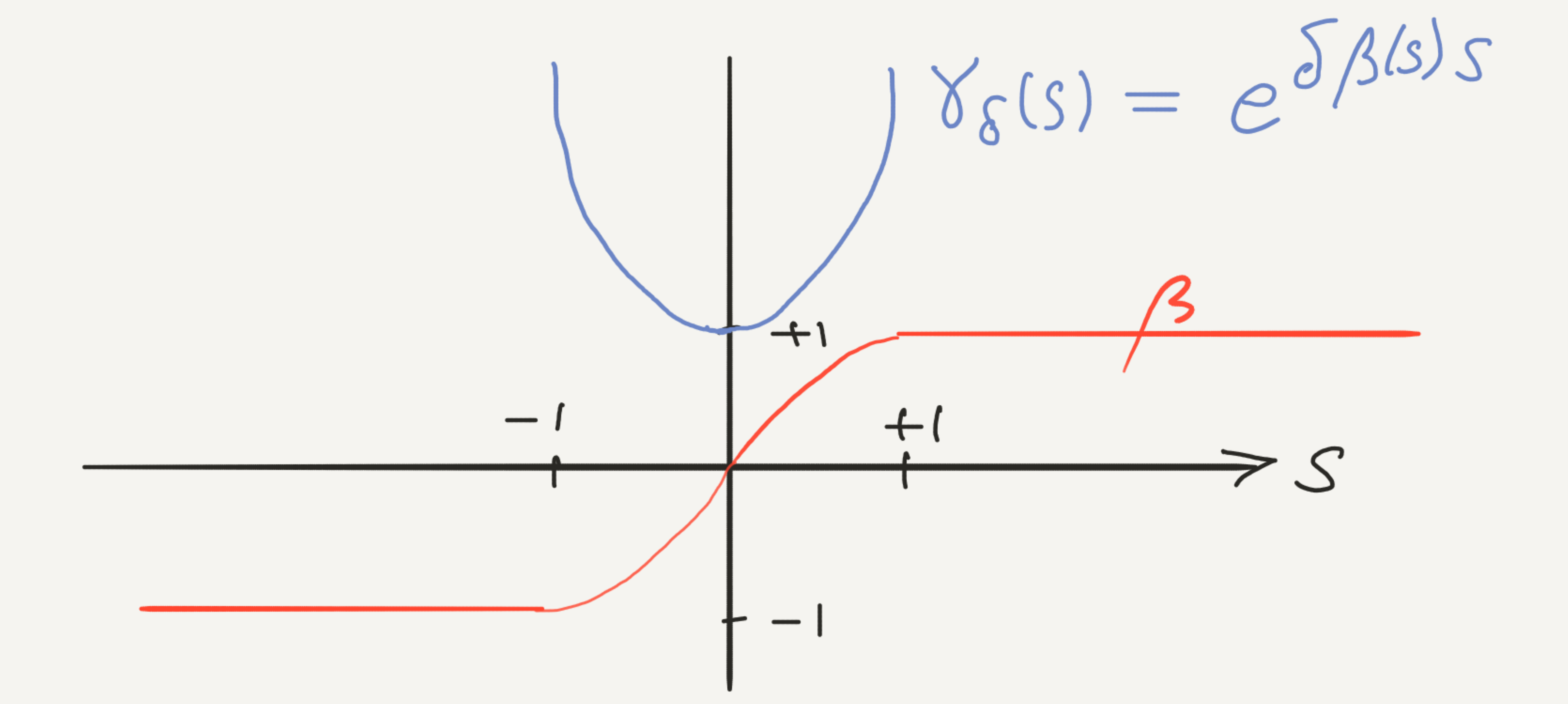}
  \caption{Monotone cutoff function $\beta$ and exponential weight $\gamma_\delta$}
  \label{fig:fig-Morse-exp-weight}
\end{figure}
Pick a constant $p\in(1,\infty)$.
Consider the Banach spaces defined for $k\in\N_0$ by
\begin{equation}\label{eq:W-kp-delta}
     W^{k,p}_\delta(\R,\R^n)
     :=\{v\in W^{k,p}(\R,\R^n)\mid
     \gamma_\delta v\in W^{k,p}(\R,\R^n)\}
\end{equation}
with norm
\[
     \Norm{v}_{W^{k,p}_\delta}
     :=\Norm{\gamma_\delta v}_{W^{k,p}}.
\]
Choose a sequence $0=\delta_0<\delta_1<\delta_2\dots$ and
define
\begin{equation}\label{eq:Morse-path-Ek}
     E_k:=W^{k,p}_{\delta_k}(\R,\R^n) ,\quad k\in\N_0.
\end{equation}
\end{example}

\begin{definition}[Weighted Hilbert space valued Sobolev spaces]
\label{def:weighted-spaces}
Let $k\in\N_0$, $p\in(1,\infty)$, and $\delta\ge 0$.
Suppose $H$ is a separable Hilbert space
and define the space $W^{k,p}_\delta(\R,H)$
by~(\ref{eq:W-kp-delta}) with $\R^n$ replaced by $H$.
This is again a Banach space, see Appendix~\ref{sec:Hilb-Sob}.
\end{definition}

\begin{example}[Path spaces for Floer homology]
\label{eq:path-Floer}
Pick a constant $p\in(1,\infty)$. For $k\in\N_0$
let $H_k$ be as in Example~\ref{eq:sc-ell2} and let $\delta_k$ be a
sequence as in Example~\ref{eq:path-Morse}.
The Banach space $E_k$ is defined as the intersection
of $k+1$ Banach spaces
\begin{equation}\label{eq:E_k-Floer}
     E_k:=\bigcap_{i=0}^k W^{i,p}_{\delta_k}(\R,H_{k-i}) ,\quad k\in\N_0.
\end{equation}
The norm on $E_k$ is defined by taking the maximum of the $k+1$
individual norms. This not only defines a norm, but even a complete one.
\end{example}

\section{Scale smoothness}\label{sec:scale-smoothness}
The notion of scale smoothness is due to Hofer, Wysocki, and
Zehnder~\cite{Hofer:2007a,Hofer:2017c,Hofer:2017a}.
An elegant way to introduce scale smoothness is via
the tangent map.
In particular, the chain rule, see Section~\ref{sec:chain-rule},
is nicely explained using the tangent map.
We also give equivalent descriptions of scale smoothness
in terms of $\SC$-differentials. This equivalent description
is useful to check scale smoothness explicitly in examples.

Let $E$ be a scale Banach space.
Given an open subset $U\subset E$, then the part of $U$ in $E_k$
is denoted by $U_k:=U \cap E_k$. Note that $U_k$ is open in $E_k$.
In particular, one obtains a nested sequence
$U=U_0\supset U_1\supset U_2\dots$.

\begin{definition}[Scale continuity]
Suppose that $E$ and $F$ are sc-Banach spaces
and $U\subset E$ is an open subset.
A map $f:U\to F$ is called \textbf{scale continuous}
or of class $\mbf{\SCz}$ if
\begin{itemize}
\item[(i)]
  $f$ is level preserving, i.e. the restriction of $f$ to each level
  $U_k$ takes values in the corresponding level $F_k$, and
\item[(ii)]
  the map $f|_{U_k}:U_k\to F_k$ is continuous.
\end{itemize}
\end{definition}

In order to introduce the notion of scale differentiable or $\SCo$
we first need to introduce the notion of tangent bundle.
The \textbf{tangent bundle} of a scale Banach space $E$
is defined as the scale Banach space
\[
     TE:=E^1\oplus E^0.
\]
If $U\subset E$ is an open subset of the scale Banach space $E$,
as in Definition~\ref{def:shift-scBsp}
one denotes by $U^m\subset E^m$ the scale of open subsets
whose levels are given by $(U^m)_k:=U_{m+k}$ where $k\in\N_0$.
The tangent bundle of $U$ is the open subset
of $TE$ defined by
\[
     TU:=U^1\oplus E^0\subset TE.
\]
Note that the filtration of $TU$ is given by
\[
     (TU)_k=U_{k+1}\oplus E_k,\quad k\in\N_0.
\]

\begin{definition}[Scale differentiability]
Suppose $f:U\to F$ is $\SCz$, then $f$ is
called \textbf{continuously scale differentiable}
or of class $\mbf{\SCo}$ if for every $x\in U_1$
there is a bounded linear map
\[
     Df(x):E_0\to F_0,
\]
called \textbf{\boldmath$\SC$-differential},
such that the following two conditions hold:
\begin{itemize}
\item[(i)]
  The restriction of $f$ to $U_1$ interpreted as a map
  $f:U_1\to F_0$ is required to be \emph{pointwise} differentiable in
  the usual sense.
  The restriction of $Df(x)$ to $E_1$ is required to be
  the differential of $f:U_1\to F_0$
  in the usual sense, notation $df(x):E_1\to F_0$, i.e.
  \begin{equation}\label{eq:scale-differential}
     Df(x)|_{E_1}=df(x).
  \end{equation}
\item[(ii)]
  The \textbf{tangent map} $Tf:TU\to TF$
  defined for $(x,h)\in U^1\oplus E^0=TU$
  by
  \[
     Tf(x,h):=\left( f(x),Df(x)h\right)
  \]
  is of class $\SCz$.
\end{itemize}
\end{definition}

\begin{remark}[Unique extension and continuous differentiability]
Suppose $f:U\to F$ is of class $\SCo$.

a) Because $E_1$ is dense in $E_0$, see Remark~\ref{rem:dense},
the map $Df(x)$ is uniquely determined by~(\ref{eq:scale-differential}).
However, note that the mere requirement that $f:U_1\to F_0$ is
pointwise differentiable does not guarantee that a bounded extension
of $df(x):E_1\to F_0$ to $E_0$ exists. Existence of such an extension
is part of the definition of $\SCo$.

b) Because $E_1$ includes compactly in $E_0$, the usual differential
$df(x)\in\Ll(E_1,F_0)$ depends continuously on $x\in U_1$.
In other words, the restriction
\[
     f\in C^1(U_1,F_0)
\]
is not only pointwise, but even continuously, differentiable in the
usual sense.

To see this suppose $x_\nu\in U_1\subset E_1$ is a sequence of points
converging to a point $x\in U_1$.
Now assume by contradiction that there is a constant $\eps>0$
and a sequence in $E_1$ of unit norm $\norm{h_\nu}_{E_1}=1$
such that $\norm{df(x_\nu)h_\nu-df(x)h_\nu}_{F_0}\ge \eps$.
Now since the inclusion $E_1\INTO E_1$ is compact
there are subsequences, still denoted by $x_\nu$ and $h_\nu$,
such that
\begin{equation*}
\begin{split}
     \lim_{\nu\to\infty}\Norm{df(x_\nu)h_\nu-df(x)h_\nu}_{F_0}
   &=\lim_{\nu\to\infty}\Norm{Df(x_\nu)h_\nu-Df(x)h_\nu}_{F_0}\\
   &=\Norm{Df(x)h-Df(x)h_\nu}_{F_0}\\
   &=0.
\end{split}
\end{equation*}
Here step one uses that $Df(p)|_{E_1}=df(p)\in\Ll(E_1,F_0)$
for $p\in U_1$ and step two holds by continuity of $Df$
with respect to the compact open topology.
Hence $\eps=0$. Contradiction.
\end{remark}

\begin{remark}[Level preservation]\label{rem:level-preservation}
Suppose that $x$ lies in $U_m$ and $k\in\{0,\dots,m-1\}$.
Then, firstly, the restriction $Df(x)|_{E_k}:E_k\to F_0$ to $E_k$
automatically takes values in the Banach space $F_k$ and, secondly, the linear
operator
$$
     Df(x)|_{E_k}:E_k\to F_k
$$
is bounded. This follows from condition~(ii) of scale differentiability:
Because $x$ lies in $U_m$ and $k<m$, one has $x\in U_{k+1}$.
Since the tangent map is $\SCz$ it maps
\[
     (TU)_k=(U^1\oplus E^0)_k=U_{k+1}\oplus E_k
\]
to
\[
     (TF)_k=(F^1\oplus F^0)_k=F_{k+1}\oplus F_k
\]
continuously.
\end{remark}

\begin{remark}[Continuity in compact-open topology]
\label{rem:co-top}
A further consequence of condition (ii) in scale differentiability
is that the scale differential viewed as a map
$$Df|_{U_{k+1}\oplus E_k}: U_{k+1}\oplus E_k\to F_k$$
is continuous in the compact-open topology:
Namely, if $h\in E_k$ and $x_\nu$ is a sequence in $U_{k+1}$, then
\[
     \lim_{\nu\to 0}
     \Norm{Df(x_\nu)h-Df(x)h}_{F_k}=0.
\]
\end{remark}

\begin{lemma}[Characterization of $\SCo$ in terms of the scale-differential $Df$]
Assume that $f:U\to F$ is $\SCz$. Then
$f$ is $\SCo$ if and only if the following conditions hold:
\begin{itemize}
\item[(i)]
  $f:U_1\to F_0$ is pointwise differentiable in the usual sense;
\item[(ii)]
  For every $x\in U_1$ the differential $df(x):E_1\to F_0$
  has a continuous extension
  $Df(x):E_0\to F_0$;
\item[(iii)]
  For all $k\in\N_0$ and $x\in U_{k+1}$, the continuous extension
  $Df(x):E_0\to F_0$ restricts to a continuous map
  \[
     Df(x)|_{E_k}:E_k\to F_k
  \] 
  such that
  \[
     Df|_{U_{k+1}\oplus E_k}: U_{k+1}\oplus E_k\to F_k
  \]
  is continuous in the compact-open topology.
\end{itemize}
\end{lemma}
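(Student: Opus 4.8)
The plan is to prove the two implications separately, in each case simply unwinding the definition of $\SCo$ together with the three Remarks above (unique extension, level preservation, and continuity in the compact-open topology). Throughout, recall that $f$ is assumed $\SCz$, hence level preserving with $f|_{U_k}\colon U_k\to F_k$ continuous for every $k$. For the direction $\SCo\Rightarrow$ (i)--(iii): condition (i) of the lemma is literally part of the definition of $\SCo$. For (ii), the $\SC$-differential $Df(x)\colon E_0\to F_0$ supplied by $\SCo$ is bounded linear and restricts to $df(x)$ on $E_1$ by~(\ref{eq:scale-differential}); since $E_1$ is dense in $E_0$ by Remark~\ref{rem:dense}, it is the unique continuous extension of $df(x)$. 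For (iii), I would invoke Remark~\ref{rem:level-preservation} with $m=k+1$ to see that $Df(x)|_{E_k}\colon E_k\to F_k$ is a well-defined bounded operator for $x\in U_{k+1}$, and Remark~\ref{rem:co-top} for the required compact-open continuity of $Df|_{U_{k+1}\oplus E_k}$. This direction is routine bookkeeping.

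For the converse, assume (i)--(iii). Condition (ii) furnishes, for each $x\in U_1$, a bounded linear extension $Df(x)\colon E_0\to F_0$ of $df(x)$, so that $Df(x)|_{E_1}=df(x)$; together with (i) this is exactly condition (i) in the definition of $\SCo$. It then remains to verify condition (ii) of that definition, i.e.\ that the tangent map $Tf(x,h)=(f(x),Df(x)h)$ is $\SCz$. Level preservation of $Tf$ on $(TU)_k=U_{k+1}\oplus E_k$ follows because $f$ maps $U_{k+1}$ to $F_{k+1}$ (as $f$ is $\SCz$) and, by (iii), $Df(x)$ maps $E_k$ to $F_k$ whenever $x\in U_{k+1}$; thus $Tf$ lands in $F_{k+1}\oplus F_k=(TF)_k$. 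Continuity of the first component $x\mapsto f(x)$ into $F_{k+1}$ is again the $\SCz$-property of $f$.

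The heart of the matter, and the step I expect to be the main obstacle, is the continuity of the second component $(x,h)\mapsto Df(x)h$ as a map $U_{k+1}\oplus E_k\to F_k$ in the full joint norm topology, whereas condition (iii) only supplies continuity in the weaker compact-open sense, i.e.\ continuity in $x$ for each fixed $h$. To bridge this gap I would take sequences $x_\nu\to x$ in $E_{k+1}$ and $h_\nu\to h$ in $E_k$ and split
\[
     Df(x_\nu)h_\nu-Df(x)h
     =Df(x_\nu)(h_\nu-h)+\bigl(Df(x_\nu)-Df(x)\bigr)h.
\]
The second summand tends to $0$ in $F_k$ by the compact-open continuity in (iii). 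For the first summand, that same compact-open continuity shows $\{Df(x_\nu)h\}_\nu$ is a convergent, hence bounded, sequence in $F_k$ for each fixed $h\in E_k$; since each $Df(x_\nu)\colon E_k\to F_k$ is bounded and $E_k$ is a Banach space, the uniform boundedness principle yields $\sup_\nu\norm{Df(x_\nu)}_{\Ll(E_k,F_k)}<\infty$. Consequently $\norm{Df(x_\nu)(h_\nu-h)}_{F_k}\le\bigl(\sup_\nu\norm{Df(x_\nu)}_{\Ll(E_k,F_k)}\bigr)\norm{h_\nu-h}_{E_k}\to 0$, so the second component is jointly continuous. This shows $Tf$ is $\SCz$ and hence $f$ is $\SCo$, completing the converse.
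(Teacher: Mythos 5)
Your proposal is correct and follows essentially the same route as the paper: the forward direction is read off from the definition together with Remarks~\ref{rem:dense}, \ref{rem:level-preservation} and~\ref{rem:co-top}, and the converse verifies that the tangent map is $\SCz$ by first checking level preservation and then sequential continuity on each $(TU)_k$. The only difference is that where the paper simply asserts $Df(x_\nu)h_\nu\to Df(x)h$ ``again by (iii)'', you supply the missing detail via the splitting $Df(x_\nu)(h_\nu-h)+\bigl(Df(x_\nu)-Df(x)\bigr)h$ and the uniform boundedness principle, which is a correct and welcome elaboration of the paper's terse step.
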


\begin{proof}
'$\Rightarrow$'
Suppose $f$ is $\SCo$. Then statements (i) and (ii) are obvious
and (iii) follows from Remarks~\ref{rem:level-preservation} and~\ref{rem:co-top}.

'$\Leftarrow$'
Suppose that $f$ is $\SCz$ and satisfies (i--iii). We have to show
that the tangent map is $\SCz$. We first discuss why $Tf$
maps $(TU)_k$ to $(TF)_k$ for every $k\in\N_0$.
Pick $(x,h)\in(TU)_k=U_{k+1}\oplus E_k$.
Since $f$ is $\SCz$ we have that $f(x)\in F_{k+1}$.
By (iii) we have that $Df(x)h\in F_k$.
Hence
\[
     Tf(x,h)=(f(x), Df(x)h)\in F_{k+1}\oplus F_k=(TF)_k.
\]
This shows that $Tf$ maps $(TU)_k$ to $(TF)_k$.

We next explain why $Tf$ as a map $Tf|_{(TU)_k}:(TU)_k\to(TF)_k$ is
continuous. Assume $(x_\nu,h_\nu)\in(TU)_k=U_{k+1}\oplus E_k$
is a sequence which converges to $(x,h)\in(TU)_k$.
Because $f$ is $\SCz$, it follows that
\[
     \lim_{\nu\to\infty} f(x_\nu)=f(x).
\]
Again by (iii) we have that 
\[
     \lim_{\nu\to\infty} Df(x_\nu)h_\nu=Df(x)h.
\]
Therefore
\[
     \lim_{\nu\to\infty} Tf(x_\nu,h_\nu)
     =\lim_{\nu\to\infty} \left(f(x_\nu),Df(x_\nu)h_\nu\right)
     =\left(f(x),Df(x)h\right)
     =Tf(x,h).
\]
This proves continuity and hence the lemma holds.
\end{proof}

For $m\ge 2$ one defines higher continuous scale differentiability
$\SC^m$ recursively as follows.

\begin{definition}[Higher scale differentiability]
An $\SCo$-map $f:U\to F$ is of class $\mbf{\SC^m}$ if and only if
its tangent map $Tf:TU\to TF$ is $\SC^{m-1}$.
It is called \textbf{sc-smooth}, or of class $\mbf{\SC^\infty}$,
iff it is of class $\SC^m$ for every $m\in\N$.
\end{definition}

An $\SC^m$-map has iterated tangent maps as follows.
Recursively one defines the iterated tangent bundle as
$T^{m+1}U:=T(T^m U)$.
For example
\begin{equation*}
\begin{split}
     T^2U=T(TU)&=T(U^1\oplus E^0)\\
   &=(U^1\oplus E^0)^1\oplus(E^1\oplus E^0)\\
   &=U^2\oplus E^1\oplus E^1\oplus E^0.
\end{split}
\end{equation*}
If $f$ is of class $\SC^m$, the iterated tangent map
$T^mf:T^mU\to T^m F$
is recursively defined as
\[
     T^mf:=T(T^{m-1}f).
\]
For example
\begin{equation*}
\begin{split}
     T^2f: U^2\oplus E^1\oplus E^1\oplus E^0
     \to F^2\oplus F^1\oplus F^1\oplus F^0
\end{split}
\end{equation*}
is given by
\begin{equation*}
\begin{split}
     T^2f(x,h,\hat x,\hat h)
     =\left(f(x),Df(x)h,Df(x)\hat x, D^2f(x)(h,\hat x)+Df(x)\hat h
     \right).
\end{split}
\end{equation*}

\begin{lemma}[Characterization of $\SC^2$ in terms of sc-differentials]
Assume that $f:U\to F$ is $\SCo$. Then
$f$ is $\SC^2$ if and only if the following conditions hold:
\begin{itemize}
\item[(i)]
  $f:U_2\to F_0$ is pointwise twice differentiable in the usual sense;
\item[(ii)]
  For every $x\in U_2$ the second differential $d^2f(x):E_2\oplus E_2\to F_0$
  has a continuous extension $D^2f(x):E_1\oplus E_1\to F_0$;
\item[(iii)]
  For all $k\in\N$ and $x\in U_{k+1}$, the continuous extension
  $D^2f(x):E_1\oplus E_1\to F_0$ restricts to a continuous bilinear map
  \[
     D^2f(x)|_{E_k\oplus E_k}:E_k\oplus E_k\to F_{k-1}
  \] 
  such that
  \[
     D^2f|_{U_{k+1}\oplus E_k\oplus E_k}: U_{k+1}\oplus E_k\oplus E_k\to F_{k-1}
  \]
  is continuous in the compact-open topology.
\end{itemize}
\end{lemma}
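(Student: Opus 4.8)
The plan is to reduce everything to the preceding characterization of $\SCo$, applied not to $f$ but to its tangent map. By definition $f$ is $\SC^2$ precisely when $Tf:TU\to TF$ is $\SCo$, and since $f$ is already $\SCo$ the map $Tf$ is $\SCz$; hence that lemma is available for $Tf$ regarded as a map between the scale Banach spaces $TE=E^1\oplus E^0$ and $TF=F^1\oplus F^0$. First I would record the levels $(TU)_k=U_{k+1}\oplus E_k$, $(TE)_k=E_{k+1}\oplus E_k$, $(TF)_k=F_{k+1}\oplus F_k$, together with the differential $D(Tf)(x,h)(\hat x,\hat h)=\bigl(Df(x)\hat x,\,D^2f(x)(h,\hat x)+Df(x)\hat h\bigr)$ read off from the formula for $T^2f$. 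The whole proof then consists in writing out the three conditions of the $\SCo$-lemma for $Tf$ and matching them against (i)--(iii).

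Each of those three conditions splits into a first-component part, involving only $Df$, and a second-component part, involving $D^2f$. I would first dispose of the first-component parts: they assert exactly that $f$, viewed on the once-shifted scales $E^1,F^1$, is again $\SCo$. Most of this is immediate, being the conditions of the $\SCo$-lemma for $f$ read at the next level; the one substantive point is the differentiability of $f:U_2\to F_1$ into the higher target. This is forced by $f\in\SCo$ together with the compactness of the inclusions $E_{k+1}\INTO E_k$, by the same mechanism as in the continuous-differentiability remark: strong (compact-open) continuity of $y\mapsto Df(y)$ upgrades, on the compactly included unit ball of $E_2$, to operator-norm continuity $U_2\to\Ll(E_2,F_1)$, and the fundamental theorem of calculus then gives the remainder estimate in $F_1$. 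Granting this, $Tf\in\SCo$ becomes equivalent to the second-component conditions alone.

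It remains to identify the second-component conditions with (i)--(iii). The differentiability of $(x,h)\mapsto Df(x)h$ into $F_0$ required by the $\SCo$-lemma for $Tf$ is condition~(i) here. The extension clause is condition~(ii): for $(x,h)\in U_2\oplus E_1$ the lemma demands a continuous extension of $\hat x\mapsto D^2f(x)(h,\hat x)$ from $E_2$ to $E_1$, and since this holds for every $h\in E_1$ and $D^2f(x)$ is symmetric, the resulting bilinear form is separately continuous on $E_1\oplus E_1$, hence jointly continuous by Banach--Steinhaus — exactly the bounded bilinear extension $D^2f(x):E_1\oplus E_1\to F_0$ of~(ii). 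Finally the level-restriction and compact-open clause for $Tf$ at level $k$ asks, for $x\in U_{k+2}$ and $h,\hat x\in E_{k+1}$, that $D^2f(x)(h,\hat x)\in F_k$ with the appropriate continuity; reindexing $k\mapsto k+1$ this is precisely~(iii), the level drop to $F_{k-1}$ being forced by pairing the lower summand $E_k$ of $(TE)_k$ against the lower summand $F_k$ of $(TF)_k$ while $x$ sits one level deeper.

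The hard part will be the $\Leftarrow$-verification of the compact-open continuity in~(iii) for the mixed term $D^2f(x_\nu)(h_\nu,\hat x)$. Along a convergent sequence $(x_\nu,h_\nu)\to(x,h)$ in $(TU)_{k+1}=U_{k+2}\oplus E_{k+1}$ the base point of $Tf$ carries the argument $h_\nu$, which therefore varies in the $E_{k+1}$-norm, whereas~(iii) only controls the motion of the base point $x$ with the bilinear arguments held fixed (cf.\ Remark~\ref{rem:co-top}). I would bridge the gap by bilinearity, writing $D^2f(x_\nu)(h_\nu,\hat x)-D^2f(x)(h,\hat x)=D^2f(x_\nu)(h_\nu-h,\hat x)+[D^2f(x_\nu)-D^2f(x)](h,\hat x)$: the second summand tends to $0$ by~(iii) at level $k+1$, and the first is bounded by $\sup_\nu\Norm{D^2f(x_\nu)(\cdot,\hat x)}_{\Ll(E_{k+1},F_k)}\Norm{h_\nu-h}_{E_{k+1}}$. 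The required uniform bound is not contained in~(iii), but follows from Banach--Steinhaus, since the family $\{D^2f(x_\nu)(\cdot,\hat x)\}_\nu$ is pointwise bounded — each sequence $D^2f(x_\nu)(\eta,\hat x)$ converges by~(iii). This uniform-boundedness step is the genuine obstacle; everything else is bookkeeping with levels, the fundamental theorem of calculus, and the compactness of the scale inclusions.
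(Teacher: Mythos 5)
The paper states this lemma without any proof (only the $\SCo$-characterization is proved; this one and Proposition~\ref{prop:char-sc-diff} are left to the reader), so there is no argument of the authors' to set yours against; I can only measure the proposal against the proof pattern of the $\SCo$-case, which is exactly what you do. Your route --- unwind ``$f$ is $\SC^2$ iff $Tf$ is $\SCo$'' by applying the $\SCo$-characterization to $Tf$ on the scales $TE=E^1\oplus E^0$, $TF=F^1\oplus F^0$, and match the first- and second-component conditions against (i)--(iii) --- is the natural one and is correct; the level bookkeeping (the reindexing $k\mapsto k+1$ producing the target $F_{k-1}$ and the restriction to $k\in\N$ in (iii)) checks out. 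You also correctly isolate the two points that are not bookkeeping. First, the $\Leftarrow$-direction needs $f:U_2\to F_1$ to be differentiable with differential $Df(x)|_{E_2}$; this appears nowhere in (i)--(iii) and is not stated anywhere in this paper, being the Hofer--Wysocki--Zehnder fact that an $\SCo$-map remains $\SCo$ on the shifted scales. Your mechanism (operator-norm continuity of $x\mapsto Df(x)|_{E_2}\in\Ll(E_2,F_1)$ extracted from compact-open continuity plus compactness of $E_2\INTO E_1$, then the fundamental theorem of calculus for the remainder) is the standard proof of that fact. Second, the Banach--Steinhaus upgrade from the pointwise convergence guaranteed by (iii) to a uniform bound on $\{D^2f(x_\nu)(\cdot,\hat x)\}_\nu$ is the right way to absorb the varying argument $h_\nu$ carried by the base point of $Tf$, which (iii) by itself does not control.

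The one place you are quicker than you should be is the identification of condition (i) with differentiability of $(x,h)\mapsto Df(x)h$ on $U_2\oplus E_1$: for $h\in E_1\setminus E_2$ this is not literally ``$f:U_2\to F_0$ is twice differentiable,'' and in the $\Leftarrow$-direction it needs the same approximate-$h$-by-$E_2$-elements argument, again via the fundamental theorem of calculus and compactness of the inclusion, that you invoke for the first component; similarly, full bilinearity and symmetry of the extended $D^2f(x)$ on $E_1\oplus E_1$ require a short density argument rather than following formally from the extension clause. Neither is a gap in the approach --- the tools you name suffice, and the paper itself dispatches the symmetry point in one line in Remark~\ref{rem:sym-D2f} --- but these steps deserve to be written out if the proof is to be more than a reduction scheme.
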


\begin{remark}[Symmetry of second scale differentials]
\label{rem:sym-D2f}
The second scale differential $D^2f(x):E_1\oplus E_1\to F_0$ is
symmetric, because the usual second differential
$d^2f(x):E_2\oplus E_2\to F_0$ is symmetric
and $E_2$ is a dense subset of the Banach space $E_1$.
\end{remark}

\begin{proposition}[Characterizing $\SC^m$ by higher
sc-differentials $D^mf(x)$]\label{prop:char-sc-diff}
Let $f:U\to F$ be $\SC^{m-1}$. Then
$f$ is $\SC^m$ iff the following conditions hold:
\begin{itemize}
\item[(i)]
  $f:U_m\to F_0$ is pointwise $m$ times differentiable in the usual sense;
\item[(ii)]
  For every $x\in U_m$ the $m^{\rm th}$ differential
  $d^mf(x):E_m\oplus\dots\oplus E_m\to F_0$
  has a continuous extension
  \[
     D^mf(x):\underbrace{E_{m-1}\oplus\dots\oplus E_{m-1}}_{\text{$m$ times}}
     \to F_0;
  \]
\item[(iii)]
  For all $k\ge m-1$ and $x\in U_{k+1}$, the continuous extension
  $D^mf(x):E_{m-1}\oplus\dots\oplus E_{m-1}\to F_0$ restricts to a
  continuous $m$-fold multilinear map

  \[
     D^mf(x):\underbrace{E_{k}\oplus\dots\oplus E_{k}}_{\text{$m$ times}}
     \to F_{k-(m-1)}=F_{k-m+1}
  \] 
  such that
  \[
     D^mf|_{U_{k+1}\oplus E_k\oplus\dots\oplus E_k}: U_{k+1}\oplus
     E_k\oplus\dots\oplus E_k\to F_{k-m+1}
  \]
  is continuous in the compact-open topology.
\end{itemize}
\end{proposition}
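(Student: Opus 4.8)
The plan is to argue by induction on $m$, the base case $m=1$ being exactly the Lemma characterizing $\SCo$. For the inductive step let $m\ge 2$, assume the Proposition already holds at order $m-1$ for arbitrary maps, and let $f:U\to F$ be of class $\SC^{m-1}$. By definition $f$ is $\SC^m$ if and only if its tangent map $Tf:TU\to TF$ is $\SC^{m-1}$. Since $f$ is $\SC^{m-1}$, the map $Tf$ is $\SC^{m-2}$, so the inductive hypothesis applies to $Tf$ and tells us that $Tf$ is $\SC^{m-1}$ if and only if conditions (i)--(iii) at order $m-1$ hold for $Tf$. It therefore suffices to show that these three conditions for $Tf$ are equivalent to conditions (i)--(iii) at order $m$ for $f$, under the standing assumption $f\in\SC^{m-1}$.

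The computational heart is the explicit form of the differentials of $Tf(x,h)=(f(x),Df(x)h)$. The first component is $f$ precomposed with a projection, so its differentials are those of $f$ and are already governed by the hypothesis $f\in\SC^{m-1}$. The second component $g(x,h):=Df(x)h$ is linear in $h$, hence differentiating it in the $h$-slot more than once yields zero; an easy induction gives
\begin{equation*}
\begin{split}
     d^{m-1}g(x,h)\bigl((\hat x_1,\hat h_1),\dots,(\hat x_{m-1},\hat h_{m-1})\bigr)
   &= D^mf(x)(h,\hat x_1,\dots,\hat x_{m-1})\\
   &\quad +\sum_{i=1}^{m-1} D^{m-1}f(x)(\hat h_i,\hat x_1,\dots,\hat x_{i-1},\hat x_{i+1},\dots,\hat x_{m-1}).
\end{split}
\end{equation*}
The crucial observation is that the only term involving an $m$-th order differential of $f$ is the leading term $D^mf(x)(h,\hat x_1,\dots,\hat x_{m-1})$; every remaining summand uses only $D^{m-1}f$, whose existence, extension, and level behaviour are already guaranteed by $f\in\SC^{m-1}$.

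Next I translate the three conditions using the identifications $(TE)_k=E_{k+1}\oplus E_k$, $(TF)_k=F_{k+1}\oplus F_k$, and $(TU)_k=U_{k+1}\oplus E_k$. Condition (i) at order $m-1$ for $Tf$ requires $(m-1)$-fold pointwise differentiability of $g:U_m\oplus E_{m-1}\to F_0$, which by the displayed formula is exactly the existence of $d^mf(x)$, i.e.\ condition (i) at order $m$ for $f$. Condition (ii) at order $m-1$ for $Tf$ concerns base points $(x,h)\in(TU)_{m-1}=U_m\oplus E_{m-1}$ and the extension of the $\hat x$-slots from $E_m$ to $E_{m-1}$; since $h$ already lies in $E_{m-1}$ and $d^mf(x)$ is symmetric (Remark~\ref{rem:sym-D2f} extends to all orders by density of $E_m$ in $E_{m-1}$), this is precisely the continuous symmetric extension $D^mf(x):E_{m-1}\oplus\dots\oplus E_{m-1}\to F_0$ of condition (ii) at order $m$ for $f$. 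Finally, condition (iii) at order $m-1$ for $Tf$, read off from the leading term with $(x,h)\in(TU)_{k+1}=U_{k+2}\oplus E_{k+1}$ and directions in $(TE)_k=E_{k+1}\oplus E_k$, places all $m$ arguments $(h,\hat x_1,\dots,\hat x_{m-1})$ into $E_{k+1}$ and sends them into the $F_{k-m+2}$-summand of $(TF)_{k-m+2}$; after the reindexing $k\mapsto k+1$, which converts the range $k\ge m-2$ into $k\ge m-1$ and the target $F_{k-m+2}$ into $F_{k-m+1}$, this is exactly condition (iii) at order $m$ for $f$, including the joint compact-open continuity.

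The main obstacle is not any single estimate but the bookkeeping. One must (a) separate, in each of the three conditions, the genuinely new leading term built from $D^mf$ from the lower-order remainder, and verify that the remainder is automatically controlled because $f\in\SC^{m-1}$; (b) track the single index shift caused by passing from $E$ to $TE$, which is what turns the range and target for $Tf$ into the range and target for $f$; and (c) invoke symmetry of the top differential to upgrade the one-sided extension in the $\hat x$-directions coming from $Tf$ to the full symmetric extension in all $m$ slots demanded by condition (ii). Once these points are in place, the equivalence of the two systems of conditions, and hence the inductive step, follows.
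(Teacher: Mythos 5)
The paper states Proposition~\ref{prop:char-sc-diff} without proof --- only the $m=1$ case (the Lemma characterizing $\SCo$) is proved there --- so there is no argument of the authors' to compare yours against. Your route, induction on $m$ by applying the order-$(m-1)$ characterization to the tangent map $Tf$ and isolating the leading term $D^mf(x)(h,\hat x_1,\dots,\hat x_{m-1})$ in $d^{m-1}\bigl(Df(\cdot)\,\cdot\bigr)$, is the natural one, and your formula for $d^{m-1}g$ (consistent with the paper's displayed $T^2f$) together with the index shift $(TE)_k=E_{k+1}\oplus E_k$ does deliver the stated ranges $k\ge m-1$ and targets $F_{k-m+1}$. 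Two points are asserted rather than proved and deserve a line each. First, in the backward direction you must verify conditions (i)--(iii) at order $m-1$ for \emph{both} components of $Tf$; for the first component this amounts to $f:U_m\to F_1$ (and, at level $k$, $f:U_{k+2}\to F_{k+1}$) being controlled to order $m-1$, which is not literally contained in the order-$(m-1)$ conditions for $f$ (those concern the target $F_0$ and the unshifted levels) but rather in the shift-invariance of $\SC^{m-1}$-regularity, i.e.\ that $f:U^1\to F^1$ is again $\SC^{m-1}$; this is true and standard but should be stated as a lemma and proved (it follows by restricting the iterated tangent maps), since your phrase ``already governed by the hypothesis'' is doing real work there. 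Second, the passage from ``$(m-1)$-fold pointwise differentiability of $(x,h)\mapsto Df(x)h$ on $U_m\oplus E_{m-1}$'' to ``condition (i) plus condition (ii) for $f$'' is an equivalence of packages, not of individual conditions: the $h$-slot is already extended to $E_{m-1}$ before the $\hat x$-slots are, so existence of $d^{m-1}g$ presupposes part of the extension property, and you should make explicit that you prove the two triples of conditions equivalent as wholes rather than item by item. With those two points written out, the induction closes.
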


The higher sc-differentials $D^mf(x)$ are \emph{symmetric}
$m$-fold multilinear maps by the argument in
Remark~\ref{rem:sym-D2f}.

\section{Chain rule}\label{sec:chain-rule}

The following theorem was proved by Hofer, Wysocki,
and Zehnder in~\cite{Hofer:2007a}.
The proof relies heavily on the compactness condition
on the scale inclusions $E_{i+1}\INTO E_i$
in Definition~\ref{def:Banach-scale} of a Banach scale.

\begin{theorem}[Chain rule]\label{thm:chain-rule}
Consider scale Banach spaces $E,F,G$
and open subsets $U\subset E$ and $V\subset F$.
Suppose the maps $f:U\to V$ and $g:V\to G$ are of class $\SCo$.
Then the composition $g\circ f:U\to G$
is of class $\SCo$ and
\[
     T(g\circ f)=Tg\circ Tf:TU\to TG.
\]
\end{theorem}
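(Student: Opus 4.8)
The plan is to verify the definition of $\SCo$ for the composition directly, guided by the form of the candidate tangent map. For $x\in U_1$ one has $f(x)\in V_1$ since $f$ is $\SCz$, so the bounded operators $Df(x)\colon E_0\to F_0$ and $Dg(f(x))\colon F_0\to G_0$ compose, and I set
\[
   D(g\circ f)(x):=Dg(f(x))\circ Df(x)\colon E_0\to G_0.
\]
Two of the required properties come essentially for free. First, $g\circ f$ is $\SCz$, being a composition of level preserving maps that are continuous on every level. Second, the candidate tangent map $Tg\circ Tf\colon TU\to TV\to TG$ is $\SCz$ as a composition of $\SCz$ maps, and a direct computation gives $Tg\circ Tf(x,h)=\big((g\circ f)(x),\,Dg(f(x))Df(x)h\big)$, which is precisely $(x,h)\mapsto\big((g\circ f)(x),D(g\circ f)(x)h\big)$. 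Thus condition (ii) in the definition of $\SCo$ holds, and the asserted identity $T(g\circ f)=Tg\circ Tf$ will follow once the differential is identified. The whole content of the theorem therefore reduces to condition (i): that $g\circ f\colon U_1\to G_0$ is pointwise differentiable in the usual sense with $d(g\circ f)(x)=D(g\circ f)(x)|_{E_1}$.

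To prove this I would fix $x\in U_1$ and write $y:=f(x)\in V_1$. For $h\in E_1$ small put $k:=f(x+h)-f(x)\in F_1$ and $r:=k-Df(x)h$; since $f\colon U_1\to F_0$ is differentiable with differential $Df(x)|_{E_1}$, we have $\Norm{r}_{F_0}=o(\Norm{h}_{E_1})$. For $h$ small the segment $y+tk$ lies in $V_1$, and applying the fundamental theorem of calculus to $t\mapsto g(y+tk)$, which is $C^1$ into $G_0$ because $g\colon V_1\to G_0$ is continuously differentiable and $k\in F_1$, gives $g(y+k)-g(y)=\int_0^1 Dg(y+tk)k\,dt$. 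Hence
\[
   g(f(x+h))-g(f(x))-D(g\circ f)(x)h=\int_0^1\big[Dg(y+tk)-Dg(y)\big]k\,dt+Dg(y)r.
\]
The last summand is $o(\Norm{h}_{E_1})$ in $G_0$ because $Dg(y)$ is bounded, so it remains to show the integral term is $o(\Norm{h}_{E_1})$ as well. This is the main obstacle, and it is exactly where the compactness of the scale inclusions is indispensable.

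I would establish it by contradiction. Suppose there were $\eps>0$ and $h_\nu\to 0$ in $E_1$ with $h_\nu\ne 0$ for which the integral term has $G_0$-norm at least $\eps\Norm{h_\nu}_{E_1}$. Normalizing, $\hat h_\nu:=h_\nu/\Norm{h_\nu}_{E_1}$ has unit $E_1$-norm, so by compactness of the inclusion $E_1\INTO E_0$ a subsequence satisfies $\hat h_\nu\to\hat h$ in $E_0$. Writing $k_\nu:=f(x+h_\nu)-f(x)$ and $\kappa_\nu:=k_\nu/\Norm{h_\nu}_{E_1}$, the decomposition $\kappa_\nu=Df(x)\hat h_\nu+r_\nu/\Norm{h_\nu}_{E_1}$ together with boundedness of $Df(x)\colon E_0\to F_0$ shows $\kappa_\nu\to\kappa:=Df(x)\hat h$ in $F_0$. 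Moreover $k_\nu\to 0$ in $F_1$, since $f\colon U_1\to F_1$ is $\SCz$-continuous, so $y+tk_\nu\to y$ in $F_1$ uniformly in $t\in[0,1]$.

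Now I invoke that $Tg$ is $\SCz$: on the zeroth level the map $(y',k')\mapsto Dg(y')k'$ is continuous from $V_1\oplus F_0$ to $G_0$. Applying this with $y'=y+tk_\nu\to y$ in $F_1$ and $k'=\kappa_\nu\to\kappa$ in $F_0$ yields $[Dg(y+tk_\nu)-Dg(y)]\kappa_\nu\to 0$ in $G_0$ for each fixed $t$; since these integrands are uniformly bounded in $G_0$ (the operator norms $\Norm{Dg(y+tk_\nu)}$ stay bounded by the uniform boundedness principle because $y+tk_\nu\to y$ in $F_1$, while $\Norm{\kappa_\nu}_{F_0}$ converges and hence is bounded), dominated convergence gives $\int_0^1[Dg(y+tk_\nu)-Dg(y)]\kappa_\nu\,dt\to 0$ in $G_0$. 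But this integral equals the integral term divided by $\Norm{h_\nu}_{E_1}$, contradicting the lower bound $\eps$. This establishes condition (i), completes the verification that $g\circ f$ is $\SCo$, and with it the tangent map identity $T(g\circ f)=Tg\circ Tf$.
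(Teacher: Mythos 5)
The paper does not prove this theorem itself: it defers entirely to Hofer--Wysocki--Zehnder~\cite{Hofer:2007a}, quoting only their remark that the proof ``uses all conditions on $\SCo$ maps.'' Your argument is a correct reconstruction of exactly that proof: the decomposition via the fundamental theorem of calculus into $\int_0^1[Dg(y+tk)-Dg(y)]k\,dt+Dg(y)r$, the contradiction argument using compactness of $E_1\INTO E_0$ to extract $\hat h_\nu\to\hat h$ in $E_0$, the joint continuity of $(y',k')\mapsto Dg(y')k'$ on $V_1\oplus F_0$ coming from $Tg$ being $\SCz$, and the uniform boundedness principle on the compact set $\{y+tk_\nu\}$ are precisely the ingredients HWZ use, so you have correctly identified where each hypothesis on $\SCo$ maps is indispensable.
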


Concerning the proof of the chain rule in~\cite[p.\,849]{Hofer:2007a}
Hofer, Wysocki, and Zehnder remark the following.

\begin{quote}
``The reader should realize that in the above proof all conditions on
$\SCo$ maps have been used, i.e. it just works.''
\end{quote}

An immediate consequence of the chain rule is the following corollary.

\begin{corollary}
Under the assumptions of Theorem~\ref{thm:chain-rule}
suppose, in addition, that $f$ and $g$ are of class $\SC^m$
where $m\in\N$. Then the composition $g\circ f:U\to G$
is of class $\SC^m$ and its $m$-fold iterated tangent map
is given by
\[
     T^m(g\circ f)=T^mg\circ T^mf:T^mU\to T^mG.
\]
\end{corollary}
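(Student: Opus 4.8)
The plan is to argue by induction on $m\in\N$, using the chain rule of Theorem~\ref{thm:chain-rule} as the base case and the recursive definitions of $\SC^m$ and of the iterated tangent map as the engine of the induction. Throughout I will exploit the elementary observation that iterating the tangent functor commutes with itself, namely $T^m X=T(T^{m-1}X)=T^{m-1}(TX)$ for any object or morphism $X$; this is immediate since $T^m$ is just $m$-fold application of $T$. All the genuine analytic content (existence of the extended differential, compactness of the inclusions, scale continuity of the tangent map) is already packaged inside Theorem~\ref{thm:chain-rule}, so the corollary should reduce to bookkeeping.

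For the base case $m=1$ there is nothing to prove: Theorem~\ref{thm:chain-rule} states precisely that $g\circ f$ is $\SCo$ and that $T(g\circ f)=Tg\circ Tf$, which is the asserted identity $T^1(g\circ f)=T^1g\circ T^1f$.

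For the inductive step I would assume the corollary holds with $m-1$ in place of $m$, for arbitrary scale Banach spaces and open subsets. Suppose now $f$ and $g$ are of class $\SC^m$. By the recursive definition of higher scale differentiability this means the tangent maps $Tf:TU\to TV$ and $Tg:TV\to TG$ are of class $\SC^{m-1}$. Since $TU$, $TV$, $TG$ are open subsets of the scale Banach spaces $TE$, $TF$, $TG$, the inductive hypothesis applies to the pair $(Tf,Tg)$ and yields that $Tg\circ Tf$ is of class $\SC^{m-1}$ together with the identity
\[
     T^{m-1}(Tg\circ Tf)=T^{m-1}(Tg)\circ T^{m-1}(Tf)=T^mg\circ T^mf,
\]
where the last equality uses $T^{m-1}(Tf)=T^mf$ and $T^{m-1}(Tg)=T^mg$. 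On the other hand, the base case gives $T(g\circ f)=Tg\circ Tf$, so $T(g\circ f)$ is of class $\SC^{m-1}$; by the recursive definition this is exactly the statement that the $\SCo$-map $g\circ f$ is of class $\SC^m$.

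To extract the tangent map formula I would then rewrite $T^m(g\circ f)=T^{m-1}(T(g\circ f))=T^{m-1}(Tg\circ Tf)$ and invoke the display above, concluding $T^m(g\circ f)=T^mg\circ T^mf$ and closing the induction. I do not expect a substantive obstacle; the only point demanding care is to apply the inductive hypothesis to the tangent maps $Tf,Tg$ rather than to $f,g$ themselves, and to use the identification $T^m=T\circ T^{m-1}=T^{m-1}\circ T$ consistently so that the two ways of peeling off a tangent functor agree.
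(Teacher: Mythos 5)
Your induction is correct and is exactly the argument the paper has in mind: the paper states this corollary without proof, calling it an ``immediate consequence'' of Theorem~\ref{thm:chain-rule}, and your proof supplies the standard bookkeeping -- base case $m=1$ is the chain rule itself, and the inductive step applies the hypothesis to the $\SC^{m-1}$ tangent maps $Tf:TU\to TV$ and $Tg:TV\to TG$ together with the identity $T^m=T^{m-1}\circ T$. The one point you rightly flag, applying the inductive hypothesis to $Tf,Tg$ rather than to $f,g$, is indeed the only place where care is needed, and your treatment of it is fine.
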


\section{Scale smooth actions}\label{sec:scale-actions}
In this section we explain that the shift map is scale smooth.
We first prove this for loop spaces $H_k=W^{k,2}(\SS^1,\R)$, however, basically the
same proof generalizes to Morse and Floer trajectory spaces
$E_k=\bigcap_{i=0}^k W^{i,p}_{\delta_k}(\R,H_{k-i})$.
Surprisingly the growth type used to define the Floer trajectory
spaces does not enter the proof.
This is due to the fact that the shift map is linear
in the second variable.

\subsection*{Loop spaces}

\begin{theorem}[Shift map on loop spaces is scale smooth]
\label{thm:shift-sc-smooth}
Let $H$ be the scale Hilbert space whose levels are given by
$H_k=W^{k,2}(\SS^1,\R)$ and consider the map
\begin{equation}\label{eq:shift-map}
     \Psi:F=\R\oplus H\to H,\quad
     (\tau,v)\mapsto \tau_*v,
\end{equation}
where $\R$ carries the constant scale structure.
Then the map $\Psi$ is $\SC$-smooth.
\end{theorem}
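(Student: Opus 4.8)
The plan is to verify scale smoothness of the shift map $\Psi$ by using the characterization of $\SC^m$-maps in terms of higher $\SC$-differentials $D^m\Psi$, established in Proposition~\ref{prop:char-sc-diff}. Since $\Psi$ is $\SC^\infty$ if and only if it is $\SC^m$ for every $m$, I would produce explicit candidate formulas for all the iterated differentials and then check the three conditions (pointwise differentiability, existence of a continuous extension dropping one level, and compact-open continuity) at each order $m$ simultaneously. The crucial structural input — emphasized repeatedly in the introduction — is that $\Psi$ is \emph{linear} in the second variable $v$. This means the only genuine differentiation happens in the $\tau$-direction, and differentiating in $\tau$ amounts to differentiating $v$ in $t$, since $\partial_\tau(\tau_*v)(t)=\partial_\tau v(t+\tau)=(\tau_* \dot v)(t)$. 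Thus each $\tau$-derivative costs exactly one Sobolev derivative, which is precisely the "loss of one level" built into the definition of the $\SC$-differential.

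**First I would** compute the candidate differentials. Writing $v'=\partial_t v$, the first $\SC$-differential should be
\[
     D\Psi(\tau,v)(\sigma,h)=\sigma\cdot\tau_*v' + \tau_* h,
\]
where $(\sigma,h)\in\R\oplus H$ is a tangent vector. More generally, because $\Psi$ is linear in $v$ and only $\tau$-derivatives produce $t$-derivatives, the $m^{\rm th}$ differential should take the symmetric form
\[
     D^m\Psi(\tau,v)\bigl((\sigma_1,h_1),\dots,(\sigma_m,h_m)\bigr)
     =\Bigl(\prod_{j=1}^m\sigma_j\Bigr)\tau_* v^{(m)}
     +\sum_{i=1}^m\Bigl(\prod_{j\ne i}\sigma_j\Bigr)\tau_* h_i^{(m-1)},
\]
where $v^{(m)}=\partial_t^m v$. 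I would verify that these are the genuine pointwise derivatives of the restriction $\Psi:F_m\to H_0$ — a finite-dimensional calculation in $\tau$ combined with the linearity in $v$ — and confirm the claimed multilinearity and symmetry, the latter being automatic from Remark~\ref{rem:sym-D2f}.

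**The main obstacle** is verifying condition~(iii) of Proposition~\ref{prop:char-sc-diff}: that $D^m\Psi$ extends to a bounded map $\bigl(\R\oplus E_k\bigr)^{\oplus m}\to H_{k-m+1}$ that is continuous in the compact-open topology, in particular continuous in $\tau$ when only a weaker norm on $v$ is controlled. Here is where the analytic content sits. On loop spaces the shift operators $\tau_*$ form a strongly continuous one-parameter group of isometries on each Hilbert level $H_k=W^{k,2}(\SS^1,\R)$; strong continuity is exactly Lemma~\ref{le:shift-C0-compopen} lifted to each level, and it is \emph{not} norm continuity, which matches the compact-open (not operator-norm) continuity demanded in~(iii). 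The key estimate is that $\tau_*\colon H_j\to H_j$ is an isometry for every $j$, so a top-order derivative like $\tau_*v^{(m)}$, applied to $v\in E_k=H_k$ (hence $v^{(m)}\in H_{k-m}$ once we differentiate, matching the target $H_{k-m+1}$ after accounting for the $\sigma$-factor bookkeeping), lands boundedly in the correct level. The compact-open continuity in $(\tau,v)$ then follows from combining (a) joint continuity of $(\tau,w)\mapsto\tau_* w$ on compact sets, which reduces via density to the smooth case exactly as in the proof of Lemma~\ref{le:shift-C0-compopen}, with (b) continuity of the differentiation operators $v\mapsto v^{(j)}$ between the relevant levels.

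**I expect** that once the strong-continuity-of-isometries viewpoint is set up on each level, every one of the three conditions at every order $m$ reduces to Lemma~\ref{le:shift-C0-compopen} applied at a shifted level, together with routine boundedness of the differentiation operators. No growth-rate information about the weights $\delta_k$ (nor, in the Floer generalization, about the fractal scale $f$) is needed, precisely because linearity in $v$ means we never multiply two $v$-dependent factors and so never invoke a product estimate. The proof is therefore uniform across the level structure, which is the phenomenon the paper advertises.
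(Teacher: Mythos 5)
Your proposal follows essentially the same route as the paper: the explicit formula you give for $D^m\Psi$ is exactly the paper's formula~(\ref{eq:Dk-Psi}), and the verification of the conditions of Proposition~\ref{prop:char-sc-diff} is reduced, as in the paper, to the compact-open continuity of $\tau\mapsto\tau_*$ on each level (Lemma~\ref{le:shift-C0-compopen}), with linearity in $v$ eliminating any product estimates. The only slip is in your level bookkeeping: the base point $v$ lies in $U_{k+1}$, i.e.\ $H_{k+1}$, so $v^{(m)}\in H_{k-m+1}$ directly (the scalar factors $\sigma_j$ play no role in the levels); otherwise your argument, which is somewhat more detailed than the paper's terse verification, is correct.
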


\begin{proof}
At the point $(\tau,v)\in F_1$ the sc-differential
evaluated on $( T_1,V_1)\in F_0$ is
\[
     D\Psi(\tau,v) \bigl( T_1,V_1\bigr)=\tau_*
     v^\prime\cdot T_1+ \tau_*V_1.
\]
At the point $(\tau,v)\in F_2$ the second sc-differential $D^2\Psi(\tau,v)$
evaluated on a pair $\bigl(( T_1,V_1), ( T_2,\hat
v_2)\bigr)\in F_1$ is given by
\[
     D^2\Psi(\tau,v) \bigl(( T_1,V_1), ( T_2,V_2)\bigr)
     =\tau_* v^{\prime\prime}\cdot T_1 T_2
     +\tau_*V_1^\prime \cdot T_2
     +\tau_*V_2^\prime \cdot T_1.
\]
By induction one shows that at $(\tau,v)\in F_k$
the $k^{\rm th}$ iterated sc-differential
\[
     D^k\Psi (\tau,v):F_{k-1}\oplus\dots\oplus F_{k-1}\to H_0
\]
evaluated on $k$ elements $( T_1,V_1),\dots,( T_k,V_k)\in
F_{k-1}$ is given by the formula
\begin{equation}\label{eq:Dk-Psi}
\begin{split}
   &D^k\Psi (\tau,v)\Bigl(( T_1,V_1),\dots,( T_k,V_k)\Bigr)\\
   &=\tau_* v^{(k)}\prod_{j=1}^k T_j
     +\sum_{j=1}^k \tau_* V_j^{(k-1)}
      T_1\dots\widehat{ T_j}\dots T_k
\end{split}
\end{equation}
where the wide hat in $\widehat{ T_j}$ means to delete that term.
That the iterated sc-differentials meet the requirements of
Proposition~\ref{prop:char-sc-diff}
follows from Lemma~\ref{le:shift-C0-compopen}.
\end{proof}

\subsection*{Floer trajectory spaces}

\begin{theorem}[Shift map on path spaces is scale smooth]
\label{thm:main}
Let $E$ be the scale Banach space of path spaces arising in Morse
or Floer homology as introduced in
Examples~\ref{eq:path-Morse} or~\ref{eq:path-Floer}.
Then the shift map in~(\ref{eq:shift-map}) with $H$ replaced by $E$
is $\SC$-smooth.
\end{theorem}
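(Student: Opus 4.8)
The plan is to repeat the proof of Theorem~\ref{thm:shift-sc-smooth} almost verbatim; the only genuinely new input is the behaviour of the translation group on the weighted, and in the Floer case Hilbert space valued, Sobolev levels. Since $\Psi$ is linear in the second variable, differentiation in $v$ is trivial and each differentiation in $\tau$ merely replaces $v$ by $\partial_s v$ and shifts. Hence the iterated sc-differentials are again given by the explicit formula~(\ref{eq:Dk-Psi}), now read in the path scales $E_k$, and it suffices to check the three conditions of Proposition~\ref{prop:char-sc-diff} for these formulas. For this I would isolate two facts about the shift $\tau_*$ on the scales of Examples~\ref{eq:path-Morse} and~\ref{eq:path-Floer}: it is a bounded operator on each level, and $\tau\mapsto\tau_* w$ is norm continuous on each level.

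First I would prove boundedness. Writing $\gamma_\delta\cdot\tau_* v=r_\tau\cdot\tau_*(\gamma_\delta v)$ with $r_\tau(s):=\gamma_\delta(s)/\gamma_\delta(s+\tau)$, one sees from $\gamma_\delta(s)=e^{\delta\beta(s)s}$ that $r_\tau$ is smooth and bounded with all derivatives bounded, locally uniformly in $\tau$; indeed $r_\tau\equiv e^{\mp\delta\tau}$ outside a compact interval. As $\tau_*$ commutes with $\partial_s$ and is an isometry of the unweighted $W^{i,p}$, the Leibniz rule for the multiplier $r_\tau$ yields $\Norm{\tau_* v}_{W^{i,p}_\delta}\le C(\tau)\Norm{v}_{W^{i,p}_\delta}$, so $\tau_*$ maps each $W^{i,p}_\delta(\R,H_j)$, and therefore each level $E_k$, boundedly to itself. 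Strong continuity $\tau\mapsto\tau_* w$ is then the exact analogue of Lemma~\ref{le:shift-C0-compopen}: compactly supported smooth $H_\infty$-valued functions are dense in every level by Appendix~\ref{sec:Hilb-Sob}, for such $w$ the map $\tau\mapsto\tau_* w$ is continuous by uniform continuity, and the uniform boundedness just obtained lets one pass to the limit by the three-$\eps$ estimate of Lemma~\ref{le:shift-C0-compopen}. As anticipated, the precise growth type of $\gamma_\delta$ never enters, precisely because $\Psi$ is linear in $v$.

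Granting these two facts, the conditions of Proposition~\ref{prop:char-sc-diff} reduce to bookkeeping on the indices, identically for the Morse and the Floer scales. For level preservation take $(\tau,v)\in F_{k+1}$ and arguments $(T_j,V_j)\in F_k$; every summand of~(\ref{eq:Dk-Psi}) is $\tau_* v^{(m)}$ or $\tau_* V_j^{(m-1)}$ times scalars. Since $\partial_s$ maps $W^{i,p}_\delta(\R,H_j)$ to $W^{i-1,p}_\delta(\R,H_j)$ and the weights satisfy $\delta_{k+1}\ge\delta_{k-m+1}$ and $\delta_k\ge\delta_{k-m+1}$ (so that the higher-weight space embeds into the lower-weight one), one gets $v^{(m)}\in E_{k-m+1}$ from $v\in E_{k+1}$ and $V_j^{(m-1)}\in E_{k-m+1}$ from $V_j\in E_k$; boundedness of $\tau_*$ keeps each summand in $E_{k-m+1}$. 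Conditions (i) and (ii) are the special case $k=m-1$, landing in $E_0$, together with the continuous extension from $F_m$ to $F_{m-1}$, which the same computation supplies. Finally the required compact-open continuity of $(\tau,v)\mapsto D^m\Psi(\tau,v)(\cdots)$ into $E_{k-m+1}$ is exactly the strong continuity of $\tau\mapsto\tau_* w$ applied to $w=v^{(m)}$ and $w=V_j^{(m-1)}$, together with the $\SCz$-continuity of $\Psi$ itself.

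The main obstacle is the first step: making boundedness and strong continuity of the translation group hold uniformly across all the weighted Hilbert space valued levels. The non-invariance of $\gamma_\delta$ under the shift is the one new phenomenon compared with the loop case of Theorem~\ref{thm:shift-sc-smooth}, and it is tamed by the bounded, slowly varying factor $r_\tau$ together with the density statement of Appendix~\ref{sec:Hilb-Sob}. Once that is in place, the remainder is the same index bookkeeping and the same appeal to Lemma~\ref{le:shift-C0-compopen} as in the loop space proof.
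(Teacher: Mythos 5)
Your proposal follows essentially the same route as the paper: reuse the loop-space proof of Theorem~\ref{thm:shift-sc-smooth} verbatim, with the only new input being continuity of $\tau\mapsto\tau_*w$ in the compact open topology on each weighted, Hilbert-space-valued level (the paper's Lemma~\ref{le:shift-C0-compopen-gen} and Corollary~\ref{cor:hghjbjk}). In fact you supply more detail than the paper does, notably the explicit multiplier $r_\tau=\gamma_\delta(s)/\gamma_\delta(s+\tau)$ taming the non-shift-invariance of the exponential weight, which the paper passes over with ``same arguments as in Lemma~\ref{le:shift-C0-compopen}.''
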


\begin{proof}
As proof of Theorem~\ref{thm:shift-sc-smooth}.
More precisely, replace Lemma~\ref{le:shift-C0-compopen}
\begin{itemize}
\item[-]
  in the Morse case by Lemma~\ref{le:shift-C0-compopen-gen}
  with finite dimensional $H$
\item[-]
  and in the Floer case by Corollary~\ref{cor:hghjbjk}.
\end{itemize}
\end{proof}

It is surprising that the proof of
Theorem~\ref{thm:main}
can be given uniformly, independent of the
monotone unbounded function $f:\N\to(0,\infty)$.
This hinges on the observation that in formula~(\ref{eq:Dk-Psi})
the $V_j$'s only enter linearly -- there are no products.
In fact, if there would be products, the regularity
would strongly depend on the growth type of $f$,
which is well known from Sobolev theory.
It is easy to understand why the $V_j$'s
only enter linearly in the formula~(\ref{eq:Dk-Psi})
of the differential: The shift map~(\ref{eq:shift-map})
is linear in the second variable.

\begin{lemma}[Continuity in compact open topology]
\label{le:shift-C0-compopen-gen}
Let $k\in\N_0$ and pick constants $p\in(1,\infty)$ and $\delta\ge 0$.
Suppose $H$ is a separable Hilbert space,
i.e. $H$ is either isometric to $\ell^2$ or of finite dimension,
and define $W^{k,p}_\delta(\R,H)$
by~(\ref{eq:W-kp-delta}) with $\R^n$ replaced by $H$.
Then the shift map
\[
     \Psi_\tau:W^{k,p}_\delta(\R,H)\to \Psi_\tau:W^{k,p}_\delta(\R,H),\quad
     v\mapsto \tau_* v,
\]
is continuous in the compact open topology, i.e.
\[
     \lim_{\tau\to 0}\Norm{\Psi_\tau(v)-v}_{W^{k,p}_\delta(\R,H)}=0.
\]
for each $v\in W^{k,p}_\delta(\R,H)$.
\end{lemma}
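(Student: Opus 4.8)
The plan is to follow the three-$\eps$ strategy of Lemma~\ref{le:shift-C0-compopen}: reduce to a dense class of nice functions on which convergence is easy, and control the error by a uniform operator bound. The essential new difficulty, compared with the warmup on $L^2(\SS^1,\R)$, is that the exponential weight $\gamma_\delta$ destroys the isometry property of $\Psi_\tau$ that drove that proof. So the first thing I would isolate is a \emph{uniform} bound
\[
     \Norm{\Psi_\tau w}_{W^{k,p}_\delta(\R,H)}\le C\,\Norm{w}_{W^{k,p}_\delta(\R,H)},
     \qquad |\tau|\le 1,
\]
with $C$ independent of $\tau$, to serve as the replacement for the isometry.

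For the density step, note that by the very definition of the norm the map $v\mapsto\gamma_\delta v$ is an isometric isomorphism of $W^{k,p}_\delta(\R,H)$ onto $W^{k,p}(\R,H)$, and that multiplication by the smooth positive function $1/\gamma_\delta$ preserves $C_c^\infty(\R,H)$; since $C_c^\infty(\R,H)$ is dense in $W^{k,p}(\R,H)$ (as in the scalar case, cf.\ Appendix~\ref{sec:Hilb-Sob}), it is dense in $W^{k,p}_\delta(\R,H)$ as well. For a fixed $w\in C_c^\infty(\R,H)$ the difference $\tau_*w-w$ is supported, for all $|\tau|\le1$, in one fixed compact set $K$ on which $\gamma_\delta,1/\gamma_\delta$ and their derivatives up to order $k$ are bounded; hence on functions supported in $K$ the weighted and unweighted $W^{k,p}$-norms are equivalent. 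Applying the strong continuity of translation on $L^p(\R,H)$ to each derivative $w^{(j)}$, $0\le j\le k$, together with the identity $(\tau_*w)^{(j)}=\tau_*\bigl(w^{(j)}\bigr)$, gives $\tau_*w\to w$ in $W^{k,p}(\R,H)$, and therefore $\Norm{\Psi_\tau w-w}_{W^{k,p}_\delta(\R,H)}\to 0$ as $\tau\to0$.

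The crux is the uniform bound, and here I would exploit the factorisation
\[
     \gamma_\delta(s)\,w(s+\tau)=\rho_\tau(s)\,(\gamma_\delta w)(s+\tau),
     \qquad
     \rho_\tau:=\frac{\gamma_\delta}{\tau_*\gamma_\delta}.
\]
Writing $\gamma_\delta=e^{\delta\beta(s)s}$, the function $h(s):=\beta(s)s$ equals $|s|$ for $|s|\ge1$ and, since $\beta',\beta'',\dots$ all have compact support, has every derivative of order $\ge1$ bounded; consequently $\log\rho_\tau(s)=\delta\bigl(h(s)-h(s+\tau)\bigr)$ is bounded together with all its derivatives, uniformly for $|\tau|\le1$ (the order-zero term by the mean value theorem), so $\rho_\tau$ ranges in a fixed bounded subset of $C^k_b(\R)$ independent of $\tau$. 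Since $\tau_*$ is an isometry of the unweighted space $W^{k,p}(\R,H)$ and multiplication by a $C^k_b$ function is bounded on $W^{k,p}(\R,H)$ with operator norm controlled by its $C^k$-norm, the Leibniz rule yields
\[
     \Norm{\Psi_\tau w}_{W^{k,p}_\delta(\R,H)}
     =\Norm{\rho_\tau\cdot\tau_*(\gamma_\delta w)}_{W^{k,p}(\R,H)}
     \le C\,\Norm{\gamma_\delta w}_{W^{k,p}(\R,H)}
     =C\,\Norm{w}_{W^{k,p}_\delta(\R,H)},
\]
with $C$ uniform in $|\tau|\le1$.

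Assembling the three-$\eps$ estimate then finishes the argument as in Lemma~\ref{le:shift-C0-compopen}: given $v$ and $\eps>0$, choose $w\in C_c^\infty(\R,H)$ with $\Norm{v-w}_{W^{k,p}_\delta(\R,H)}$ small, dominate $\Norm{\Psi_\tau(v-w)}_{W^{k,p}_\delta(\R,H)}\le C\Norm{v-w}_{W^{k,p}_\delta(\R,H)}$ by the uniform bound, and send the middle term $\Norm{\Psi_\tau w-w}_{W^{k,p}_\delta(\R,H)}$ to zero by the density-set convergence. I expect the main obstacle to be precisely this uniform boundedness step: verifying that the weight ratio $\rho_\tau$ stays in a fixed $C^k_b$-ball as $\tau\to0$ is where the explicit form $\gamma_\delta=e^{\delta\beta(s)s}$ is genuinely used, and it is the one place where the non-compactness of $\R$, and hence the need for the exponential weight, really enters.
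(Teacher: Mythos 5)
Your proof is correct and follows the route the paper intends: the paper's own proof of this lemma consists of the single sentence ``Same arguments as in Lemma~\ref{le:shift-C0-compopen},'' i.e.\ the three-$\eps$ reduction to a dense class of nice functions. What you add, and what the paper leaves entirely implicit, is the one ingredient that does \emph{not} transfer literally from the warmup: in Lemma~\ref{le:shift-C0-compopen} the error term $\Norm{\Psi_\tau(v)-\Psi_\tau(v_{\nu_0})}_H$ was controlled because $\Psi_\tau$ is an isometry of $L^2(\SS^1)$, whereas on $W^{k,p}_\delta(\R,H)$ the exponential weight destroys this, and one genuinely needs the uniform bound $\Norm{\Psi_\tau}_{\Ll(W^{k,p}_\delta,W^{k,p}_\delta)}\le C$ for $|\tau|\le 1$. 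Your factorisation $\gamma_\delta\cdot\tau_*w=\rho_\tau\cdot\tau_*(\gamma_\delta w)$ with $\rho_\tau=\gamma_\delta/\tau_*\gamma_\delta$, together with the observation that $\log\rho_\tau=\delta\bigl(h-\tau_*h\bigr)$ for $h(s)=\beta(s)s$ stays in a fixed $C^k_b$-ball (mean value theorem for the order-zero term, compact support of $\beta',\beta'',\dots$ for the rest), is exactly the right way to establish this, and the remaining steps (density of $C^\infty_{\rm c}(\R,H)$, strong continuity of translation on $L^p(\R,H)$ applied to each $w^{(j)}$, equivalence of weighted and unweighted norms on the fixed compact set containing $\supp(\tau_*w-w)$) are all sound. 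In short, your write-up is the honest, complete version of the argument the paper compresses into one line, and you have correctly identified the weight-ratio estimate as the point where the non-compactness of $\R$ actually enters.
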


\begin{proof}
Same arguments as in Lemma~\ref{le:shift-C0-compopen}.
\end{proof}

An immediate Corollary of the lemma is the following.

\begin{corollary}\label{cor:hghjbjk}
Let $k\in\N_0$ and pick constants $p\in(1,\infty)$ and $\delta_k\ge 0$.
Let $f:\N\to(0,\infty)$ be a monotone unbounded function
and consider the weighted Hilbert spaces $H_0:=\ell^2$ and
$H_j:=\ell^2_{f^j}$ for $j\in\N$ as in~(\ref{eq:ell-2-f-k}).
Then the shift map on the intersection Banach space
\[
     \Psi_\tau:
     E_k=\bigcap_{i=0}^k W^{i,p}_{\delta_k}(\R,H_{k-i})
     \to
     E_k=\bigcap_{i=0}^k W^{i,p}_{\delta_k}(\R,H_{k-i}) ,\quad
     v\mapsto \tau_* v,
\]
is continuous in the compact open topology.
\end{corollary}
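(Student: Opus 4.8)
The plan is to reduce the claim for the intersection space to finitely many applications of Lemma~\ref{le:shift-C0-compopen-gen}. Recall from Example~\ref{eq:path-Floer} that the norm on $E_k=\bigcap_{i=0}^k W^{i,p}_{\delta_k}(\R,H_{k-i})$ is the maximum of the $k+1$ individual norms $\Norm{\cdot}_{W^{i,p}_{\delta_k}(\R,H_{k-i})}$, $i=0,\dots,k$. Since a maximum of finitely many nonnegative quantities tends to $0$ precisely when each of them does, it suffices to show, for every fixed $i\in\{0,\dots,k\}$, that $\Norm{\Psi_\tau(v)-v}_{W^{i,p}_{\delta_k}(\R,H_{k-i})}\to 0$ as $\tau\to 0$. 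Note that the index set is finite, so no uniformity of the convergence across the $i$ is required; this is the precise sense in which the corollary is \emph{immediate}.

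First I would check that each factor satisfies the hypotheses of Lemma~\ref{le:shift-C0-compopen-gen}. The exponent $p\in(1,\infty)$ is fixed and the single weight $\delta_k\ge 0$ is common to all $k+1$ factors, so only the coefficient space changes with $i$. The space $H_{k-i}$ is either $\ell^2$ (when $k-i=0$) or the weighted space $\ell^2_{f^{k-i}}$; in the latter case the rescaling map $\phi_{k-i}$ of Example~\ref{eq:sc-ell2} is an isometric Hilbert space isomorphism $\ell^2\to\ell^2_{f^{k-i}}$, so $H_{k-i}$ is in every case a separable Hilbert space isometric to $\ell^2$ (or finite dimensional). Thus each factor $W^{i,p}_{\delta_k}(\R,H_{k-i})$ is exactly of the type covered by the lemma.

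Finally I would assemble the pieces. A given $v\in E_k$ lies, by definition of the intersection, in each $W^{i,p}_{\delta_k}(\R,H_{k-i})$, and the shift $\Psi_\tau$ acts by the same formula $(\tau_*v)(t)=v(t+\tau)$ independently of the ambient factor. Applying Lemma~\ref{le:shift-C0-compopen-gen} with $H=H_{k-i}$ and $\delta=\delta_k$ therefore yields $\Norm{\Psi_\tau(v)-v}_{W^{i,p}_{\delta_k}(\R,H_{k-i})}\to 0$ for each $i$, and taking the maximum over the finite range $i=0,\dots,k$ gives $\Norm{\Psi_\tau(v)-v}_{E_k}\to 0$. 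I expect essentially no obstacle beyond recording the isometry $\phi_{k-i}$ that makes the weighted coefficient spaces admissible in the lemma; everything else is bookkeeping forced by the maximum norm on the intersection, and the shared weight $\delta_k$ is exactly what lets a single invocation of the lemma handle each factor.
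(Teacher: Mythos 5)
Your argument is correct and is exactly what the paper intends by calling the corollary ``immediate'': the norm on $E_k$ is the maximum of the $k+1$ factor norms, each factor $W^{i,p}_{\delta_k}(\R,H_{k-i})$ has a separable Hilbert coefficient space and so falls under Lemma~\ref{le:shift-C0-compopen-gen}, and a finite maximum of quantities tending to zero tends to zero. The paper omits the proof entirely, so your write-up simply makes the same reduction explicit.
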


\section{Fractal Hilbert scale structures on mapping spaces}
\label{sec:mapping-spaces}
In this section we explain how fractal scale Hilbert spaces
can be used to model the targets in Floer homology.
Let $N$ be a closed manifold. Fix an integer $k_0>\frac12\dim N$
and consider the Hilbert scale defined by
\[
     \Map(N,\R^r)=\Map(N,\R^r)_0
     :=W^{k_0,2}(N,\R^r)\supset W^{k_0+1,2}(N,\R^r)\supset\dots .
\]
The spectral theory of the Laplace operator $\Delta_g$
associated to a Riemannian metric $g$ on $N$
shows that this Hilbert scale is given by the fractal
Hilbert scale $\ell^{2,f}$ associated in Example~\ref{eq:sc-ell2} to
the weight function
\begin{equation}\label{eq:dim-frac}
     f(\nu)=\nu^{2/\dim N}.
\end{equation}
Observe that $f$ only
depends on the dimension of \emph{the domain} $N$,
it is independent of the dimension of the target;
cf.~\cite{Kang:2011a}.
This phenomenon is very reminiscent of the Sobolev theory
which is sensible to the dimension of the domain, but not of the target.

\subsection*{Periodic boundary conditions}

We illustrate this for the Hilbert space
$H=\Map(\SS^1,\C):=L^2(\SS^1,\C)$.
Here we get away with $k_0=0$, because we view
maps $\SS^1\to\C$ as maps $\R\to\C$ which are
$1$-periodic. So there is no need to take local
coordinate charts on $\SS^1$ and therefore
we don't need continuity of the elements of
our mapping space $\Map(\SS^1,\C)$.
The Hilbert space $H$
consists of all infinite sums of the form
\[
     v(t)=\sum_{\ell\in\Z} v_\ell e^{2\pi i\ell t}
\]
whose Fourier coefficient sequences $(v_\ell)_{\ell\in\N}\subset\C$ are square
summable, that is
\[
     \sum_{\ell\in\Z}\abs{v_\ell}^2<\infty.
\]
For $k\in\N_0$ the subspace $W^{k,2}(\SS^1,\C)$ of $L^2(\SS^1,\C)$
consists of those $v$ for which the weighted sum
$\sum_{\ell\in\Z} (1+\ell^2)^k \abs{v_\ell}^2$ is finite.
Up to equivalent weight functions
the space $W^{k,2}(\SS^1,\C)$ coincides with the $k^{\rm th}$ level
\[
     H_k:=(\ell^{2,f})_k=\ell^2_{f^k}
\]
of the scale Hilbert space $\ell^{2,f}$ associated in
Example~\ref{eq:sc-ell2} to the weight function
$f:\N\to(0,\infty)$, $\nu\mapsto \nu^2$.
This is consistent with formula~(\ref{eq:dim-frac}),
because $\dim\SS^1=1$.

\subsection*{Lagrangian boundary conditions}

Consider the relative mapping space
\[
     H=\Map([0,1],\{0,1\};\C,\R):=W^{1,2}(([0,1],\{0,1\}),(\C,\R))
\]
that consists of all $W^{1,2}$ paths $\gamma:[0,1]\to\C=\R\times i\R$ whose initial
and end points lie on the real line $\R$.
We define a scale structure on $H$ by choosing as $k^{\rm th}$ level
the space
\[
     H_k:=\left\{
     \gamma\in W^{k+1,2}([0,1],\C)\,\big|\,
     \gamma^{(\ell)}(0), \gamma^{(\ell)}(1)\in i^\ell\R,\, 0\le\ell\le k
     \right\}.
\]
Note that we not have just Lagrangian boundary conditions
for the path, but also for its derivatives.
This is crucial to get a fractal scale Hilbert structure.
These boundary conditions also appeared in the
thesis of Sim\v{c}evi\'{c}~\cite{Simcevic:2014a}
in her Hardy space approach to gluing.
Observe that these boundary conditions are well posed,
because the $k^{\rm th}$ derivative of such a path is still
continuous by the Sobolev embedding theorem $W^{k+1,2}\INTO C^k$
on the 1-dimensional domain $[0,1]$.

Given a path $\gamma\in H_k$, consider the associated loop in $\C$
defined by doubling
\[
     \Gamma_\gamma (t):=\begin{cases}
       \gamma(2t)&\text{, $t\in[0,\frac12]$,}\\
       \widebar\gamma(2-2t)&\text{, $t\in[\frac12,1]$,}\\
     \end{cases}
\]
\begin{figure}
  \centering
  \includegraphics
                             [height=4.5cm]
                             {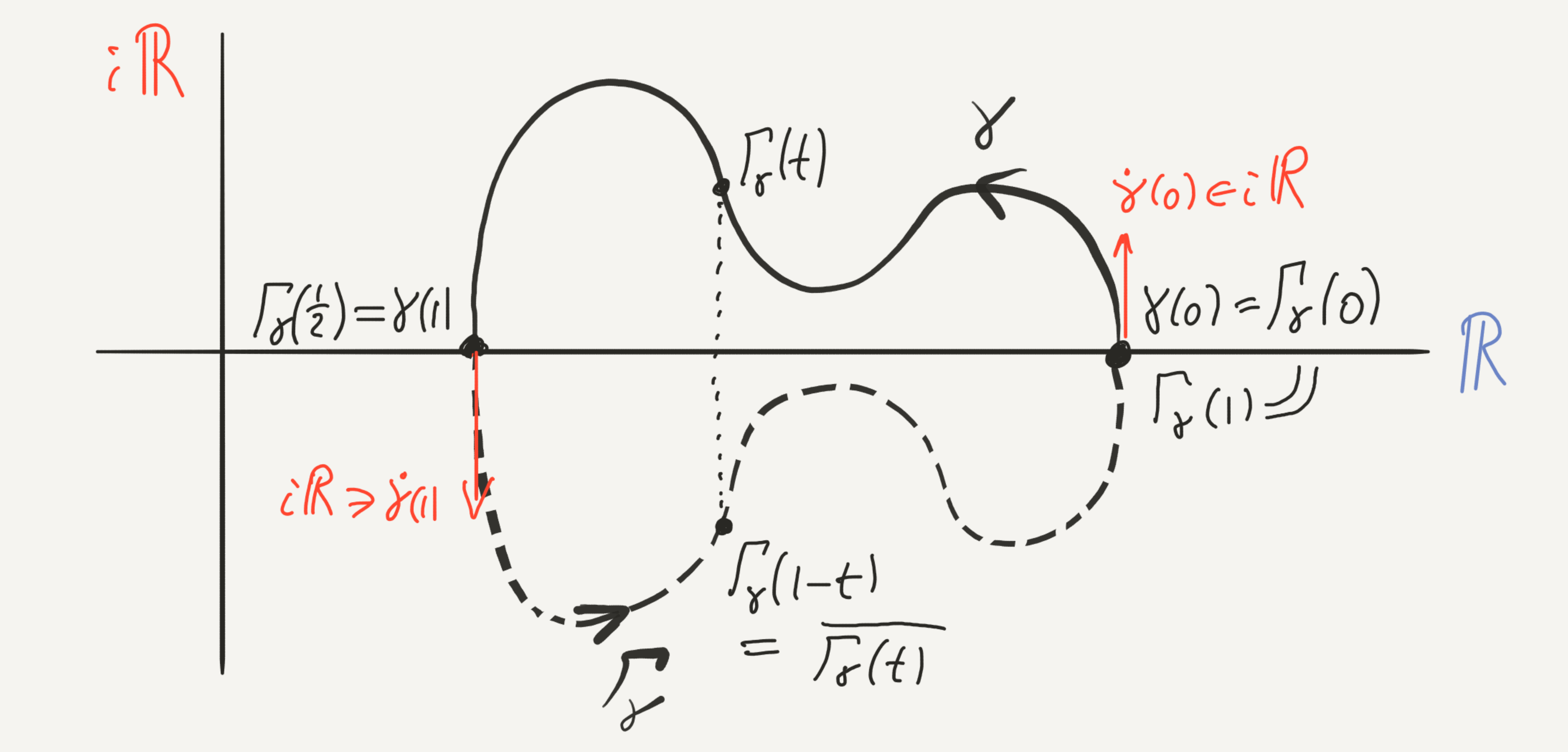}
  \caption{Doubling a path $\gamma\in H_k$ gives a loop $\Gamma_\gamma$}
  \label{fig:fig-LAG-doubling-map}
\end{figure}
where $\bar z=x-iy$ denotes complex conjugation of a complex number
$z=x+iy$. Note that $\Gamma_\gamma$ is indeed a loop
\[
     \Gamma_\gamma (1)=\widebar\gamma(0)=\gamma(0)=\Gamma_\gamma(0)
\]
where the second step holds due to the condition that initial 
points of our paths lie on the real line $\R\subset\C$.
The real endpoint condition guarantees that
the loop is also continuous at $t=\frac12$, hence everywhere.
We claim that
\[
     \Gamma_\gamma \in W^{k+1,2}(\SS^1,\C).
\]
Because $\gamma\in W^{k+1,2}([0,1],\C)$, it suffices to show that
$\Gamma_\gamma$ is $k$ times differentiable at the points $0$ and
$\frac12$. This follows from the boundary conditions imposed in the
definition of the space $H_k$.

As illustrated by Figure~\ref{fig:fig-LAG-doubling-map}
The doubling map $\gamma\mapsto\Gamma_\gamma$ gives us an embedding
\[
     I:H_k\INTO W^{k+1,2}(\SS^1,\C),\quad
     \gamma\mapsto\Gamma_\gamma .
\]
The elements of the image of $I$ are precisely
those $W^{k+1,2}$ loops $\Gamma$ in $\C$ that
are symmetric with respect to the real line $\R\subset\C$,
more precisely
\begin{equation}\label{eq:Gamma-symm}
     \Gamma(t)=\overline{\Gamma(1-t)},\quad t\in[0,1].
\end{equation}
Indeed suppose $\Gamma\in W^{k+1,2}(\SS^1,\C)$
satisfies~(\ref{eq:Gamma-symm}), taking its first half
$\gamma_\Gamma (t):=\Gamma(t/2)$ for $t\in[0,1]$
it follows from~(\ref{eq:Gamma-symm}) that $\gamma_\Gamma \in H_k$ and
$I(\gamma_\Gamma )=\Gamma_{\gamma_\Gamma}=\Gamma$.

Suppose $\Gamma\in\im(I)$ lies in the image of $I$, that is
$\Gamma:\SS^1\to\C$ is of class $W^{k+1,2}$ and
satisfies~(\ref{eq:Gamma-symm}).
Writing $\Gamma$ as a Fourier series we obtain that
\[
     \sum_{\ell\in\Z} v_k e^{2\pi i\ell t}
     =\Gamma(t)=\overline{\Gamma(1-t)}=\overline{\Gamma(-t)}
     =\overline{\sum_{\ell\in\Z} v_k e^{-2\pi i\ell t}}
     =\sum_{\ell\in\Z} \widebar v_k e^{2\pi i\ell t}.
\]
This shows that all Fourier coefficients $v_k=\widebar v_k$
are real. In particular, up to scale isomorphism,
the scale relative mapping space $H$
is scale isomorphic to the fractal Hilbert scale $\ell^{2,f}$
for the weight function $f(\nu)=\nu^2$,
in symbols
\[
     H=W^{1,2}(([0,1],\{0,1\}),(\C,\R))\simeq \ell^{2,f},\quad f(\nu)=\nu^2 .
\]

\subsection*{The growth type of various Floer homologies}

In view of the discussion before we can now list the
growth type of the various Floer homologies mentioned in the
introduction.

The growth types are different, however, we point out that
the main result, Theorem~\ref{thm:main},
is independent of the growth type and therefore applies
to all of the following.

\vspace{.5cm}
\begin{tabular}{llll}
\toprule
     Floer homology
  & Order
  & Mapping space
  & Growth type
\\
\midrule
     Periodic
  & $1^{\rm st}$ 
  & loop space
  & $f(\nu)=\nu^2$
\\
     Lagrangian
  & $1^{\rm st}$ 
  & path space
  & $f(\nu)=\nu^2$
\\
     Hyperk\"ahler
  & $1^{\rm st}$ 
  & $\Map(M^3,\R^{2n})$
  & $f(\nu)=\nu^{2/3}$
\\
     Heat flow
  & $2^{\rm nd}$ 
  & loop space
  & $f(\nu)=\nu^4$
\\
\bottomrule
\end{tabular}

\section{Banach scale structures -- main examples}
\label{sec:Bsc-str-ex}
In this section we show that the examples of scale structures
introduced in Section~\ref{sec:scale-structures}
actually satisfy the axioms of scale structures.

\subsection*{Fractal scale Hilbert spaces}

Consider a monotone unbounded function $f:\N\to(0,\infty)$
and consider the weighted Hilbert spaces $H_0:=\ell^2$ and
$H_k:=\ell^2_{f^k}$ for $k\in\N$ as in Example~\ref{eq:sc-ell2}.
Our aim is to show that the nested sequence of Hilbert spaces
$\ell^{2,f}=\ell^2\supset\ell^2_f\supset\ell^2_{f^2}\dots$ carries
the structure of a scale Hilbert space, that is
compact inclusions and density of $\cap_{k=0}^\infty \ell^2_{f^k2}$
in each level $\ell^2_{f^k2}$.

\textbf{Compact inclusions.}
Consider the finite dimensional subspace
$V_N:=\{\sum_{i=1}^N a_i e_i\mid a_i\in\R\}\subset\ell^2_f$,
the orthogonal projection $\pi_N:\ell^2_f\to V_N$, and the
non-commutative diagram
\begin{equation*}
\begin{tikzcd} 
\ell^2_f
\arrow[rr, hook, "I"]
\arrow[d, "\pi_N"']
  && \ell^2
\\
V_N
\arrow[rru, hook, "I_N"']
  &&
\end{tikzcd}
\end{equation*}
By finite dimension of $V_N$ the inclusion $I_N$ is
a compact operator. Therefore the composition
$I^N:=I_N\pi_N:\ell^2_f\to\ell^2$ is compact.
The estimate
\begin{equation*}
\begin{split}
     \Norm{I-I^N}_{\Ll(\ell^2_f,\ell^2}
   &=\sup_{\Norm{v}_{\ell^2_f}=1}\Norm{(I-I^N)v}_{\ell^2}\\
   &=\sup_{\Norm{v}_{\ell^2_f}=1}\Norm{(\Id-\pi_N)v}_{\ell^2}\\
   &\le 1/f(N)
\end{split}
\end{equation*}
shows that $I^N\to I$, as $N\to\infty$, in the operator norm topology.
Hence $I$ is compact by Theorem~\ref{thm:compact-limit-op}.

\textbf{Density.}
Let $V=\cup_{N=1}^\infty V_N$ be the union of all the $V_N$.
The inclusions
\[
     V
     \subset
     \bigcap_{k=0}^\infty \ell^2_{f^k}
     \subset \ell^2_{f^k}
\]
together with density of $V$ in $\ell^2_{f^k}$
implies density of $\cap_{k=0}^\infty \ell^2_{f^k}$ in each weighted
Hilbert space $\ell^2_{f^k}$. We proved the following theorem.
 
\begin{theorem}[The fractal Hilbert scale]\label{thm:fractal-scale}
The sequence of fractal Hilbert spaces $H_k=\ell^2_{f^k}$
defined by~(\ref{eq:ell-2-f-k})
forms a Banach scale.
\end{theorem}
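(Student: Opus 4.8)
The plan is to verify the two axioms of a Banach scale from Definition~\ref{def:Banach-scale} for the nested sequence $H_k=\ell^2_{f^k}$: compactness of each inclusion $H_{k+1}\INTO H_k$, and density of the intersection $H_\infty=\cap_{k\ge 0}H_k$ in every level. Nestedness itself is immediate, since $f$ is monotone unbounded and hence bounded below by $f(1)>0$, so $f^{k}\le f(1)^{-1}f^{k+1}$ termwise and each weighted norm dominates the previous one.

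First I would treat the base inclusion $I:\ell^2_f\INTO\ell^2$, that is $H_1\INTO H_0$, and approximate $I$ in operator norm by finite-rank, hence compact, operators. Let $V_N:=\SPAN\{e_1,\dots,e_N\}$, let $\pi_N$ be the coordinate projection onto $V_N$, and set $I^N:=I_N\pi_N$ with $I_N:V_N\INTO\ell^2$; each $I^N$ has finite rank and is therefore compact. The key is the tail estimate: for $\Norm{v}_{\ell^2_f}=1$ one has $\Norm{(I-I^N)v}_{\ell^2}^2=\sum_{\nu>N}v_\nu^2\le f(N)^{-1}\sum_{\nu>N}f(\nu)v_\nu^2\le f(N)^{-1}$, using monotonicity of $f$. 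Hence $\Norm{I-I^N}_{\Ll(\ell^2_f,\ell^2)}\le f(N)^{-1/2}\to 0$ because $f$ is unbounded, so $I$ is a norm limit of compact operators and is compact by Theorem~\ref{thm:compact-limit-op}.

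To pass to all levels I would run the very same tail estimate with the weights $f^{k+1}$ and $f^{k}$: for $\Norm{v}_{H_{k+1}}=1$ one gets $\Norm{(I-I^N)v}_{H_k}^2=\sum_{\nu>N}f(\nu)^{k}v_\nu^2\le f(N)^{-1}\sum_{\nu>N}f(\nu)^{k+1}v_\nu^2\le f(N)^{-1}$, which again forces $\Norm{I-I^N}_{\Ll(H_{k+1},H_k)}\to 0$ and hence compactness of $H_{k+1}\INTO H_k$ for every $k$. Alternatively, one may invoke the fractal self-similarity recorded in Example~\ref{eq:sc-ell2}: the isometric $\SC$-isomorphism $\phi_k$ identifies $H_{k+1}\INTO H_k$ with the base inclusion $H_1\INTO H_0$, so compactness transfers verbatim. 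Either route closes axiom (i). For density, I would note that the finitely supported sequences $V=\cup_N V_N$ lie in every $\ell^2_{f^k}$, hence in $H_\infty$, and that $V$ is already dense in each $\ell^2_{f^k}$ since the partial sums of any element converge in the weighted norm; as $V\subset H_\infty\subset H_k$, density of $H_\infty$ in $H_k$ follows at once, giving axiom (ii).

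I expect the main obstacle to be the compactness claim, specifically the combination of the clean weighted tail estimate with the principle that a norm limit of compact operators is compact (Theorem~\ref{thm:compact-limit-op}); the density argument is routine. The one subtlety worth flagging is making the reduction across levels precise — either by re-running the estimate with weights $f^{k+1}$ versus $f^{k}$ or by appealing to $\phi_k$ — rather than proving only the single inclusion $\ell^2_f\INTO\ell^2$.
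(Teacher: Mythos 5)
Your proof is correct and follows essentially the same route as the paper: finite-rank truncations $I^N=I_N\pi_N$ converging in operator norm via the weighted tail estimate (so that Theorem~\ref{thm:compact-limit-op} applies), and density of the finitely supported sequences $V=\cup_N V_N$ for axiom~(ii). If anything you are slightly more careful than the paper, which records the bound as $1/f(N)$ where the correct exponent is $f(N)^{-1/2}$ as you have it, and which only writes out the inclusion $\ell^2_f\INTO\ell^2$ explicitly, whereas you make the reduction to general levels precise.
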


\subsection*{Morse path spaces}

Our aim is to show that the sequence of Morse path spaces
$E_k=W^{k,p}_{\delta_k}(\R,\R^n)$ introduced in
Example~\ref{eq:path-Morse} has the two defining properties of
a Banach scale, compact inclusions and density.

\begin{theorem}[The Morse path Banach scale]\label{thm:Morse-path}
The sequence of Morse path spaces $E_k=W^{k,p}_{\delta_k}(\R,\R^n)$
defined by~(\ref{eq:Morse-path-Ek})
forms a Banach scale.
\end{theorem}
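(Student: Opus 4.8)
The plan is to verify the two axioms of Definition~\ref{def:Banach-scale} for the nested sequence $E_k=W^{k,p}_{\delta_k}(\R,\R^n)$ of~(\ref{eq:Morse-path-Ek}): compactness of each inclusion $E_{k+1}\INTO E_k$ and density of $E_\infty=\cap_k E_k$ in every level. The whole argument rests on two elementary features of the weights. First, by definition $v\mapsto\gamma_\delta v$ is an isometry from $W^{k,p}_\delta(\R,\R^n)$ onto $W^{k,p}(\R,\R^n)$. Second, the weights multiply, $\gamma_a\gamma_b=\gamma_{a+b}$, so that writing $\mu:=\delta_{k+1}-\delta_k>0$ one has $\gamma_{\delta_k}=\gamma_{-\mu}\gamma_{\delta_{k+1}}$, and on the region $\abs{s}\ge 1$ (where $\beta(s)s=\abs{s}$) the factor $\gamma_{-\mu}(s)=e^{-\mu\abs{s}}$ together with all of its derivatives is bounded by $C_je^{-\mu\abs{s}}$. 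It is precisely this extra exponential gain, absent in a pure Sobolev embedding on $\R$, that produces compactness.

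\textbf{Compact inclusions.} I would follow the template of the proof of Theorem~\ref{thm:fractal-scale}, approximating the inclusion $I:E_{k+1}\to E_k$ in operator norm by compact operators and invoking Theorem~\ref{thm:compact-limit-op}. Fix a smooth cutoff $\chi_R$ with $\chi_R\equiv 1$ on $[-R,R]$, $\supp\chi_R\subset[-R-1,R+1]$, and derivatives bounded uniformly in $R\ge 1$, and set $I^Rv:=\chi_Rv$. Since $\chi_Rv$ is supported in the compact interval $[-R-1,R+1]$, where the weights $\gamma_{\delta_k},\gamma_{\delta_{k+1}}$ are bounded above and below so that the weighted norms are equivalent to the ordinary ones, the operator $I^R$ factors through the compact Rellich--Kondrachov embedding $W^{k+1,p}([-R-1,R+1])\INTO W^{k,p}([-R-1,R+1])$ and is therefore compact. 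For the complementary tail I would write $\gamma_{\delta_k}(1-\chi_R)v=m_Rw$ with $w:=\gamma_{\delta_{k+1}}v$ and $m_R:=(1-\chi_R)\gamma_{-\mu}$; the Leibniz rule together with the bounds on $\gamma_{-\mu}$ gives $\Norm{m_R}_{C^k}\le Ce^{-\mu R}$, whence
\[
     \Norm{(I-I^R)v}_{E_k}
     =\Norm{m_Rw}_{W^{k,p}}
     \le Ce^{-\mu R}\Norm{w}_{W^{k+1,p}}
     =Ce^{-\mu R}\Norm{v}_{E_{k+1}}.
\]
Hence $\Norm{I-I^R}\to 0$ as $R\to\infty$, and $I$ is compact as a norm limit of compact operators.

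\textbf{Density.} Here the key observation is that $C_c^\infty(\R,\R^n)\subset E_\infty$: a smooth function supported in some $[-a,a]$ satisfies $\gamma_{\delta_k}\phi\in W^{k,p}$ for every $k$, because on $[-a,a]$ the factor $\gamma_{\delta_k}$ is smooth and bounded. It then suffices to show that $C_c^\infty$ is dense in each level $E_k$. Given $v\in E_k$, the isometry sends it to $w:=\gamma_{\delta_k}v\in W^{k,p}$; approximating $w$ by $w_\nu\in C_c^\infty$ in $W^{k,p}$ and setting $v_\nu:=\gamma_{-\delta_k}w_\nu\in C_c^\infty$ yields $\Norm{v-v_\nu}_{E_k}=\Norm{w-w_\nu}_{W^{k,p}}\to 0$. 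Since $C_c^\infty\subset E_\infty\subset E_k$, density of $E_\infty$ in $E_k$ follows.

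The main obstacle is the compactness step, and specifically the uniform derivative bookkeeping hidden in $\Norm{m_R}_{C^k}\le Ce^{-\mu R}$: one must check that every term produced by differentiating the product $(1-\chi_R)\gamma_{-\mu}$ up to order $k$ is controlled by $e^{-\mu R}$ on its support, using that the derivatives of $\gamma_{-\mu}$ on $\abs{s}\ge R$ equal $(\pm\mu)^je^{-\mu\abs{s}}$ and that the cutoff derivatives are bounded independently of $R$. Everything else is the same isometry-plus-Rellich argument already used for the fractal Hilbert scale, and in particular the conclusion is insensitive to whether the sequence $\delta_k$ is bounded.
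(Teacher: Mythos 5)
Your proof is correct, but the compactness step takes a genuinely different route from the paper's. The paper proves compactness of $E_{k+1}\INTO E_k$ in three stages: first the base case $W^{1,p}_\delta(\R)\INTO L^p(\R)$ via sharp truncation to $[-T,T]$ (restriction, Rellich, extension by zero) together with the operator-norm tail estimate $e^{-\delta T}$ and Theorem~\ref{thm:compact-limit-op}; then an induction on $k$ in which a bounded sequence and its weak derivatives are compactified separately and reassembled through the weak-derivative identity~(\ref{eq:w=dot-v}); and finally a conjugation by multiplication with $\gamma_{\delta_0}$ (the commutative diagram~(\ref{eq:morse-coboundary})) to pass from target weight $0$ to general $\delta_0$. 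Your smooth cutoff $\chi_R$ collapses all three stages into one: because $\chi_R v$ extends by zero without losing regularity --- which a sharp truncation would not, for targets $W^{k,p}$ with $k\ge 1$, and which is exactly why the paper needs the induction on $k$ --- you can land directly in $W^{k,p}_{\delta_k}$, and the Leibniz estimate $\Norm{m_R}_{C^k}\le Ce^{-\mu R}$ simultaneously absorbs the weight shift that the paper handles by conjugation. The price is the derivative bookkeeping you flag, which is routine since $\gamma_{-\mu}^{(j)}(s)=(\mp\mu)^j e^{-\mu\abs{s}}$ for $\abs{s}\ge 1$ and the cutoff derivatives are uniform in $R$. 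For density both arguments are the same reduction to density of $C^\infty_{\rm c}(\R,\R^n)$ in each level $E_k$; the paper takes that density for granted, whereas you derive it from the isometry $v\mapsto\gamma_{\delta_k}v$ onto the unweighted space, a welcome extra detail.
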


\begin{proof}
Density: The inclusions
\[
     C^\infty_{\rm c}(\R,\R^n)
     \subset
     E_\infty:=\bigcap_{k=0}^\infty W^{k,p}_{\delta_k}(\R,\R^n)
     \subset W^{k,p}_{\delta_k}(\R,\R^n) =:E_k
\]
together with density of the set of compactly supported smooth functions
in the Banach space $W^{k,p}_{\delta_k}(\R,\R^n)$
implies density of $E_\infty$ in each level $E_k$.

Compact inclusions: Proposition~\ref{prop:Morse-path-compact-k}.
\end{proof}

\begin{proposition}[Compact inclusions]\label{prop:Morse-path-compact-k}
Suppose $k\in\N$ and $p\in(1,\infty)$. For non-negative reals
$\delta_1>\delta_0$ the inclusion
\[
     I:W^{k,p}_{\delta_1}(\R)\INTO W^{k-1,p}_{\delta_0}(\R)
\]
is a compact linear operator. The Banach spaces are
defined by~(\ref{eq:W-kp-delta}).
\end{proposition}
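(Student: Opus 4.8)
The plan is to realize $I$ as an operator-norm limit of compact operators obtained by spatial truncation, and then to invoke Theorem~\ref{thm:compact-limit-op}. Two mechanisms are combined: the gain of one derivative, which produces compactness on bounded intervals via the Rellich--Kondrachov theorem, and the gap $\delta_1>\delta_0$ between the exponential weights, which forces the contribution of the tails $\{|s|\ge N\}$ to decay exponentially in $N$.

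First I would fix a family of smooth cutoffs $\chi_N\in C^\infty(\R,[0,1])$ with $\chi_N\equiv 1$ on $[-N,N]$ and $\chi_N\equiv 0$ outside $[-N-1,N+1]$, chosen as translates of a fixed profile near $\pm N$ so that all derivatives $\chi_N^{(i)}$ are bounded uniformly in $N$, and set $I^N v:=\chi_N v$. Each function $\chi_N v$ is supported in the fixed compact interval $K_N:=[-N-1,N+1]$, on which the smooth positive weights $\gamma_{\delta_0},\gamma_{\delta_1}$ and their derivatives are bounded above and below; hence multiplication by a fixed such weight is an isomorphism of $W^{m,p}(K_N)$, so on functions supported in $K_N$ the weighted norms are equivalent to the unweighted $W^{m,p}(K_N)$-norms. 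Consequently $\{\chi_N v:\Norm{v}_{W^{k,p}_{\delta_1}}\le 1\}$ is bounded in $W^{k,p}(K_N)$, hence precompact in $W^{k-1,p}(K_N)$ by Rellich--Kondrachov (the embedding $W^{k,p}\INTO W^{k-1,p}$ is compact on bounded intervals), and therefore precompact in $W^{k-1,p}_{\delta_0}(\R)$. Thus each $I^N$ is a compact operator.

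Next I would estimate the tail $(I-I^N)v=(1-\chi_N)v$, which is supported in $\{|s|\ge N\}$. The key pointwise inequality is that for $N\ge 1$ and $|s|\ge N$ one has, since $\beta(s)s=|s|$ there,
\[
     \gamma_{\delta_0}(s)=e^{(\delta_0-\delta_1)|s|}\,\gamma_{\delta_1}(s)\le e^{(\delta_0-\delta_1)N}\,\gamma_{\delta_1}(s),
\]
using $\delta_0<\delta_1$. To exploit this inside the Sobolev norm I would first record the standard equivalence $\Norm{\gamma_\delta u}_{W^{m,p}}\le C\sum_{l=0}^m\Norm{\gamma_\delta u^{(l)}}_{L^p}$ and conversely, which holds because each derivative $\gamma_\delta^{(i)}$ is pointwise comparable to $\gamma_\delta$ (the logarithmic derivative of $\gamma_\delta$ is bounded). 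Expanding $\gamma_{\delta_0}(1-\chi_N)v$ by the Leibniz rule and using the uniform bounds on $\chi_N^{(i)}$ together with the displayed weight inequality then yields
\[
     \Norm{(1-\chi_N)v}_{W^{k-1,p}_{\delta_0}}\le C\,e^{(\delta_0-\delta_1)N}\,\Norm{v}_{W^{k,p}_{\delta_1}},
\]
with $C$ independent of $N$; the loss of one derivative is harmless, since only orders up to $k-1$ appear on the left while $v\in W^{k,p}$ controls orders up to $k$. As $\delta_0-\delta_1<0$, this gives $\Norm{I-I^N}_{\Ll(W^{k,p}_{\delta_1}(\R),W^{k-1,p}_{\delta_0}(\R))}\to 0$, and Theorem~\ref{thm:compact-limit-op} concludes that $I$ is compact.

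I expect the only real obstacle to be the bookkeeping in this tail estimate: one must control all the Leibniz cross-terms arising from differentiating the triple product $\gamma_{\delta_0}(1-\chi_N)v$ up to order $k-1$, keep the constant uniform in $N$ (this is precisely why the cutoff derivatives must be bounded independently of $N$), and justify the weighted-norm equivalence so that the exponential factor $e^{(\delta_0-\delta_1)N}$ can be measured against the $\gamma_{\delta_1}$-weighted norm of $v$. Everything else is a routine combination of Rellich--Kondrachov with the norm-limit criterion for compact operators.
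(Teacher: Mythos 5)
Your proof is correct, but it takes a genuinely different route from the paper's. The paper proceeds in three separate steps: it first proves the special case $k=1$, $\delta_0=0$ (Lemma~\ref{le:Morse-path-compact}) by restricting to $[-T,T]$, using the compact Sobolev embedding there, extending by zero, and controlling the tail by $e^{-\delta T}$; it then reaches general $k$ with $\delta_0=0$ (Lemma~\ref{le:Morse-path-compact-k}) by induction, extracting convergent subsequences of $v_\nu$ and $\dot v_\nu$ separately and identifying the limit of $\dot v_\nu$ as the weak derivative of the limit of $v_\nu$; finally it removes the restriction $\delta_0=0$ by conjugating with the multiplication isomorphisms $T,S$ given by $\gamma_{\delta_0}$ in the commutative diagram~(\ref{eq:morse-coboundary}). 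You collapse all three steps into a single cutoff decomposition $I=\chi_N\cdot{}+(1-\chi_N)\cdot{}$: local compactness comes from Rellich--Kondrachov applied directly at the level $W^{k,p}\INTO W^{k-1,p}$ on a bounded interval (no induction on $k$), and the tail is killed in operator norm by the factor $e^{(\delta_0-\delta_1)N}$ coming straight from the weight gap (no conjugation). Both arguments rest on the same two mechanisms---derivative gain for local compactness, weight gap for tail decay---and both conclude via the norm-closedness of compact operators, Theorem~\ref{thm:compact-limit-op}. What your version buys is uniformity and self-containedness; what the paper's version buys is a reusable $\delta_0=0$ lemma and an induction scheme that it re-runs almost verbatim for the Hilbert-space-valued Floer path spaces in Lemma~\ref{le:Floer-path-comp-k}. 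One point you should make explicit: the extension by zero of $\chi_N v$ from $K_N$ to $\R$ lies in $W^{k-1,p}(\R)$ (with comparable norm) because $\chi_N v$ vanishes identically near $\partial K_N$; with that remark, and the observation that the logarithmic derivative of $\gamma_\delta$ is bounded so that all Leibniz cross-terms are dominated by $\gamma_{\delta_0}\abs{v^{(c)}}$ with $c\le k-1$, your tail estimate closes and the argument is complete.
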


In order to prove the proposition we first prove two lemmas.

\begin{lemma}[$k=1$, $\delta_0=0$]\label{le:Morse-path-compact}
For constants $p\in(1,\infty)$ and $\delta>0$ the inclusion
\[
     I:W^{1,p}_\delta(\R)\INTO L^p(\R)
\]
is compact where the Banach space $W^{1,p}_\delta$ is defined
by~(\ref{eq:W-kp-delta}).
\end{lemma}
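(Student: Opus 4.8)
The plan is to realize $I$ as an operator-norm limit of compact operators and then invoke Theorem~\ref{thm:compact-limit-op}, exactly in the spirit of the proof for the fractal Hilbert scales. The starting observation is that, by the very definition~(\ref{eq:W-kp-delta}) of the norm, multiplication by the weight $v\mapsto\gamma_\delta v$ is an isometric isomorphism $W^{1,p}_\delta(\R)\to W^{1,p}(\R)$; writing $w:=\gamma_\delta v$ one has $\Norm{w}_{W^{1,p}}=\Norm{v}_{W^{1,p}_\delta}$ and $I(v)=\gamma_\delta^{-1}w$. The weight forces exponential decay, since $\gamma_\delta(s)=e^{\delta\abs{s}}$ for $\abs{s}\ge 1$, and this is precisely what will produce smallness in the tail.

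For each $R\ge 1$ I would introduce the cut-off operator $I^R:W^{1,p}_\delta(\R)\to L^p(\R)$, $I^R(v):=\chi_{[-R,R]}\,v$, i.e.\ restrict to $[-R,R]$ and extend by zero. The first step is to check that each $I^R$ is compact, and I would do this by factoring it as the composition of the restriction $\rho:W^{1,p}_\delta(\R)\to W^{1,p}([-R,R])$, $v\mapsto v|_{[-R,R]}$, the Rellich--Kondrachov embedding $\kappa:W^{1,p}([-R,R])\INTO L^p([-R,R])$, which is compact on a bounded interval for $p\in(1,\infty)$, and extension by zero $L^p([-R,R])\to L^p(\R)$, which is isometric. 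Since $\kappa$ is compact and the other two maps are bounded, $I^R=\epsilon\circ\kappa\circ\rho$ is compact. The only point needing a word here is the boundedness of $\rho$: on the compact interval $[-R,R]$ the smooth positive function $\gamma_\delta$ and its reciprocal are bounded together with their first derivatives, so multiplication by $\gamma_\delta^{-1}$ is a bounded operator on $W^{1,p}([-R,R])$, whence $\Norm{v}_{W^{1,p}([-R,R])}\le C_R\Norm{\gamma_\delta v}_{W^{1,p}(\R)}=C_R\Norm{v}_{W^{1,p}_\delta}$.

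The second step is the tail estimate giving $I^R\to I$ in operator norm. For $\abs{s}>R\ge 1$ one has $\gamma_\delta(s)=e^{\delta\abs{s}}\ge e^{\delta R}$, hence $\gamma_\delta(s)^{-1}\le e^{-\delta R}$, so that
\[
   \Norm{(I-I^R)v}_{L^p(\R)}^p
   =\int_{\abs{s}>R}\abs{v}^p\,ds
   \le e^{-\delta pR}\int_{\abs{s}>R}\abs{\gamma_\delta v}^p\,ds
   \le e^{-\delta pR}\Norm{v}_{W^{1,p}_\delta}^p .
\]
Therefore $\Norm{I-I^R}_{\Ll(W^{1,p}_\delta,L^p)}\le e^{-\delta R}\to 0$ as $R\to\infty$. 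As each $I^R$ is compact and $I$ is their operator-norm limit, Theorem~\ref{thm:compact-limit-op} yields compactness of $I$. Note that it is harmless that $I^R(v)$ has jumps at $\pm R$ and so need not lie in $W^{1,p}$, since the target of $I^R$ is only $L^p(\R)$.

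I expect the sole genuinely delicate input to be the local compactness furnished by $\kappa$: one must invoke the full Rellich--Kondrachov theorem on a bounded interval (and use $p>1$) to know $W^{1,p}([-R,R])\INTO L^p([-R,R])$ is compact, while being careful to keep the exponential weight out of this interior estimate and to exploit it only in the tail. Everything else is bookkeeping around the isometry $v\mapsto\gamma_\delta v$ and the comparability of $\gamma_\delta$ to a positive constant on each fixed compact interval.
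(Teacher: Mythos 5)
Your proof is correct and follows essentially the same route as the paper: restriction to $[-T,T]$, the compact Sobolev embedding $W^{1,p}([-T,T])\INTO L^p([-T,T])$, extension by zero, the tail estimate $\Norm{I-I^T}\le e^{-\delta T}$ from the exponential weight, and the norm-closedness of compact operators (Theorem~\ref{thm:compact-limit-op}). The only difference is cosmetic: you spell out the boundedness of the restriction operator and the jump discontinuity of the truncation, which the paper leaves implicit.
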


Without the exponential weights the inclusion
$W^{1,p}(\R)\INTO L^p(\R)$
is in general not compact, as shown by the sequence
$v_\nu(t):=v(t-\nu)$ of right shifts of a given function $v$
of positive norm, for instance a bump function $v$.

\begin{proof}
For $T>0$ consider the continuous operators given by restriction
\[
     R_T: W^{1,p}_\delta(\R)\to W^{1,p}([-T,T]),\quad
     v\mapsto v|_{[-T,T]},
\]
and extension by zero $E_T: L^{p}([-T,T])\to L^p(\R)$.
Since $p>\dim [-T,T]=1$ the inclusion operator
$
     I_T: W^{1,p}([-T,T])\INTO L^p([-T,T])
$
is compact by the Sobolev embedding theorem.
Hence the composition
\[
     I^T:=E_T I_T R_T:W^{1,p}_\delta(\R)\to L^p(\R)
\]
is compact. Because the set of compact linear operators
is norm closed, see Theorem~\ref{thm:compact-limit-op},
it suffices to show that $I^T\to I$ in the
norm topology, as $T\to\infty$. Indeed
\begin{equation*}
\begin{split}
     \Norm{I-I^T}_{\Ll (W^{1,p}_\delta , L^p)}
   &=\sup_{\Norm{v}_{W^{1,p}_\delta}=1} \Norm{(I-I^T)v}_{L^p}\\
   &=\sup_{\Norm{v}_{W^{1,p}_\delta}=1}\Norm{v|_{(-\infty,-T]\cup[T,\infty)}}_{L^p}\\
   &\le e^{-\delta T}
\end{split}
\end{equation*}
whenever $T\ge 1$.
To see the final estimate observe that
\begin{equation*}
\begin{split}
     \Norm{v|_{(-\infty,-T]\cup[T,\infty)}}_{L^p}
   &\le\frac{1}{e^{\delta T}}\Norm{\gamma_\delta\cdot v|_{(-\infty,-T]\cup[T,\infty)}}_{L^p}\\
   &\le\frac{1}{e^{\delta T}}\Norm{v|_{(-\infty,-T]\cup[T,\infty)}}_{W^{1,p}_\delta}\\
   &\le\frac{1}{e^{\delta T}}\Norm{v}_{W^{1,p}_\delta}\\
   &=e^{-\delta T}
\end{split}
\end{equation*}
whenever $\norm{v}_{W^{1,p}_\delta}=1$ and
where step one uses that $T\ge 1$.
\end{proof}

\begin{lemma}[General $k$, $\delta_0=0$]
\label{le:Morse-path-compact-k}
Given $k\in\N$ and reals $p\in(1,\infty)$ and $\delta>0$, then the
inclusion
\[
     I:W^{k,p}_\delta(\R)\INTO W^{k-1,p}(\R)
\]
is compact. The Banach spaces $W^{k,p}_\delta$ are defined by~(\ref{eq:W-kp-delta}).
\end{lemma}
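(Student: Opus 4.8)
The plan is to reduce the general statement to the case $k=1$ already settled in Lemma~\ref{le:Morse-path-compact}, by differentiating and extracting subsequences. The key point is that differentiation sends an element of the order-$k$ weighted space boundedly into the order-one weighted spaces. To make this precise I would first record a norm equivalence: writing $\Norm{v}_{W^{k,p}_\delta}=\Norm{\gamma_\delta v}_{W^{k,p}}$ and applying the Leibniz rule gives
\[
     (\gamma_\delta v)^{(m)}=\sum_{i=0}^m\binom{m}{i}\gamma_\delta^{(i)}\,v^{(m-i)},
\]
where each $\gamma_\delta^{(i)}$ equals $\gamma_\delta$ times a bounded function, since every derivative of $s\mapsto\beta(s)s$ is bounded (the cutoff $\beta$ is constant outside a compact set). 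Hence the $W^{k,p}_\delta$-norm is equivalent to $\sum_{m=0}^k\Norm{\gamma_\delta v^{(m)}}_{L^p}$; see Appendix~\ref{sec:Hilb-Sob}. In particular, for $0\le j\le k-1$ one has $j+1\le k$, so $v^{(j)}$ lies in $W^{1,p}_\delta(\R)$ with $\Norm{v^{(j)}}_{W^{1,p}_\delta}\le C\Norm{v}_{W^{k,p}_\delta}$ for a constant $C=C(k,p,\delta)$.

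Next I would run the compactness argument. Let $(v_\nu)$ be a bounded sequence in $W^{k,p}_\delta(\R)$. By the previous step, for each fixed $j\in\{0,\dots,k-1\}$ the sequence $(v_\nu^{(j)})_\nu$ is bounded in $W^{1,p}_\delta(\R)$. Lemma~\ref{le:Morse-path-compact} asserts that the inclusion $W^{1,p}_\delta(\R)\INTO L^p(\R)$ is compact, so $(v_\nu^{(j)})_\nu$ has an $L^p$-convergent subsequence. Since there are only finitely many indices $j$, extracting successively over them yields a single subsequence, still denoted $(v_\nu)$, together with limits $w_0,\dots,w_{k-1}\in L^p(\R)$ such that $v_\nu^{(j)}\to w_j$ in $L^p(\R)$ for every $0\le j\le k-1$.

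Finally I would identify the limits. Because $v_\nu\to w_0$ and $v_\nu'\to w_1$ in $L^p(\R)$, passing to the limit in the defining identity $\int v_\nu\,\varphi'=-\int v_\nu'\,\varphi$ for test functions $\varphi$ shows $w_0'=w_1$ weakly, and inductively $w_j=w_0^{(j)}$. Thus $v_\nu^{(j)}\to w_0^{(j)}$ in $L^p(\R)$ for all $0\le j\le k-1$, which is precisely $v_\nu\to w_0$ in $W^{k-1,p}(\R)$. Hence $I$ maps bounded sequences to sequences admitting convergent subsequences, so it is compact.

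The main obstacle is the first step: establishing the norm equivalence $\Norm{\gamma_\delta v}_{W^{k,p}}\simeq\sum_{m=0}^k\Norm{\gamma_\delta v^{(m)}}_{L^p}$, which is what guarantees that differentiation carries $W^{k,p}_\delta$ boundedly into the order-one weighted spaces and thereby lets Lemma~\ref{le:Morse-path-compact} be applied componentwise. Once this is in place, the identification of the $L^p$-limits as successive derivatives of $w_0$ is only the routine closedness of weak differentiation.
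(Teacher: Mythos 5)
Your proof is correct and follows essentially the same route as the paper's: the paper proves the lemma by induction on $k$, applying the induction hypothesis to $v_\nu$ and $\dot v_\nu$ and then identifying the limit of the derivatives as the weak derivative of the limit, which unwinds to exactly your scheme of applying the $k=1$ case to each $v_\nu^{(j)}$ and using closedness of weak differentiation. The only difference is that you make explicit the Leibniz-rule estimate showing that differentiation maps $W^{k,p}_\delta$ boundedly into the lower-order weighted spaces (via the boundedness of the derivatives of $s\mapsto\beta(s)s$), a point the paper uses silently when it asserts that $\dot v_\nu$ again lies in the unit ball of $W^{k,p}_\delta$.
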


\begin{proof}
The lemma follows by induction on $k$. For $k=1$ the assertion is true
by Lemma~\ref{le:Morse-path-compact}. To prove the induction step
$k\Rightarrow k+1$ suppose the lemma holds true for $k$.
Let $v_\nu$ be a sequence in the unit ball of $W^{k+1,p}_\delta(\R)$.
Hence both $v_\nu$ and its derivative $\dot v_\nu$ lie in the unit
ball of $W^{k,p}_\delta(\R)$. By induction hypothesis $k$
there exist elements $v,w\in W^{k-1,p}_\delta(\R)$
and a subsequence, still denoted by $v_\nu$, such that
as $\nu\to\infty$ it holds that
\[
     \text{$v_\nu\stackrel{W^{k-1,p}}{\longrightarrow} v$
     \quad and \quad
     $\dot v_\nu\stackrel{W^{k-1,p}}{\longrightarrow} w$.}
\]
Note that $w$ is equal to the weak derivative $\dot v$. Indeed the
definition of the weak derivative provides the first of the identities
\begin{equation}\label{eq:w=dot-v}
\begin{split}
     \int_\R\phi\dot v
     =-\int_\R\dot\phi  v
     =-\lim_{\nu\to\infty} \int_\R\dot\phi  v_\nu
     =\lim_{\nu\to\infty} \int_\R\phi \dot v_\nu
     =\int_\R\phi w
\end{split}
\end{equation}
which hold true for every test function $\phi\in C^\infty_0(\R)$.
In particular, the element $v$ lies in $W^{k,p}$
and $v_\nu\to v$ in $W^{k,p}$.
\end{proof}

\begin{proof}[Proof of Proposition~\ref{prop:Morse-path-compact-k}]
Consider the isomorphisms
\[
     T:W^{k,p}_{\delta_1}\to W^{k,p}_{\delta_1-\delta_0}
     ,\quad
     S:W^{k-1,p}_{\delta_0}\to W^{k-1,p}
     ,\quad v\mapsto \gamma_{\delta_0}\cdot v ,
\]
which both act by multiplication by the weight function $\gamma_{\delta_0}$.
The assertion of the proposition -- compactness of the
inclusion $I:W^{k,p}_{\delta_1}(\R)\INTO W^{k-1,p}_{\delta_0}(\R)$ --
follows since the diagram commutes
\begin{equation}\label{eq:morse-coboundary}
\begin{tikzcd} [column sep=tiny]
W^{k,p}_{\delta_1-\delta_0}
\arrow[rr, hook]
  && W^{k-1,p}
\\
W^{k,p}_{\delta_1}
\arrow[rr, hook, "I"]
\arrow[u, "T", "\simeq"']
  && W^{k-1,p}_{\delta_0}
    \arrow[u, "\simeq", "S"']
\end{tikzcd}
\end{equation}
and the upper inclusion is compact by
Lemma~\ref{le:Morse-path-compact-k}.
\end{proof}

\subsection*{Floer path spaces}

Our aim is to show that the sequence of Floer path spaces $E_k$ introduced in
Example~\ref{eq:path-Floer} has the two defining properties of
a Banach scale, compact inclusions and density.
Hence from now on let $f:\N\to(0,\infty)$ be a monotone unbounded function
and consider the weighted Hilbert spaces $H_0:=\ell^2$ and
$H_k:=\ell^2_{f^k}$ for $k\in\N$ as in~(\ref{eq:ell-2-f-k}).

\begin{theorem}[The Floer path Banach scale]\label{thm:Floer-path}
The sequence of Floer path spaces $E_k$
defined by~(\ref{eq:E_k-Floer}) forms a Banach scale.
\end{theorem}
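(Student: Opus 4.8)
The plan is to verify the two Banach-scale axioms of Definition~\ref{def:Banach-scale} for the spaces $E_k=\bigcap_{i=0}^k W^{i,p}_{\delta_k}(\R,H_{k-i})$, following the pattern of the proofs of Theorems~\ref{thm:fractal-scale} and~\ref{thm:Morse-path} but interleaving the two mechanisms used there. For the compactness axiom the key reduction is that it suffices to prove, for every $i\in\{0,\dots,k\}$, that the inclusion of $E_{k+1}$ into the single factor $G_i:=W^{i,p}_{\delta_k}(\R,H_{k-i})$ of $E_k$ is compact; compactness of $E_{k+1}\INTO E_k$ for the maximum norm then follows by extracting from a bounded sequence successive subsequences converging in $G_0,G_1,\dots,G_k$ and noting that the common limit lies in $E_k$. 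Here $E_{k+1}$ embeds boundedly into each of its defining factors $F_{i'}:=W^{i',p}_{\delta_{k+1}}(\R,H_{k+1-i'})$, and the proof uses two of them: $F_i$ (same Sobolev order as $G_i$, one degree more regular target) and $F_{i+1}$ (one more derivative, same target $H_{k-i}$).

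I would then imitate the norm-limit-of-compact-operators argument justified by Theorem~\ref{thm:compact-limit-op}. Let $\pi_N$ be the orthogonal projection onto $V_N=\SPAN\{e_1,\dots,e_N\}$, which commutes with $\partial_t$ and with multiplication by $\gamma_{\delta}$, and fix a smooth cutoff $\beta_T$ equal to $1$ on $[-T,T]$ and $0$ outside $[-2T,2T]$. Define the approximating operators $A_{N,T}:E_{k+1}\to G_i$, $v\mapsto\beta_T\,\pi_N v$. Each $A_{N,T}$ is compact: using membership in $F_{i+1}$, the map $v\mapsto\beta_T\pi_N v$ is bounded from $E_{k+1}$ into $W^{i+1,p}([-2T,2T],V_N)$, the target is finite dimensional, so the Rellich--Kondrachov embedding $W^{i+1,p}([-2T,2T],V_N)\INTO W^{i,p}([-2T,2T],V_N)$ used in Lemma~\ref{le:Morse-path-compact} is compact, and extension by zero into $G_i$ is bounded because $\beta_T\pi_N v$ vanishes near the endpoints. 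It remains to show $A_{N,T}\to I$ in operator norm as $N,T\to\infty$. The mode-truncation error is controlled by the fractal weight gain, $\|(I-\pi_N)v\|_{G_i}\le C f(N)^{-1/2}\|v\|_{F_i}$, coming from $\sum_{j>N}f(j)^{k-i}|h_j|^2\le f(N)^{-1}\sum_{j>N}f(j)^{k+1-i}|h_j|^2$ together with $\gamma_{\delta_k}\le C\gamma_{\delta_{k+1}}$; and the domain-truncation error is controlled by the exponential weight gain exactly as in Lemma~\ref{le:Morse-path-compact}, since $\gamma_{\delta_k}\le e^{-(\delta_{k+1}-\delta_k)T}\gamma_{\delta_{k+1}}$ on $\{|t|\ge T\}$, so the terms produced by $(1-\beta_T)$ and by the derivatives of $\beta_T$ are bounded by $C e^{-(\delta_{k+1}-\delta_k)T}\|v\|_{F_i}$. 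Adding the two estimates yields $\|I-A_{N,T}\|\le C\bigl(f(N)^{-1/2}+e^{-(\delta_{k+1}-\delta_k)T}\bigr)\to 0$, whence $I:E_{k+1}\to G_i$ is compact.

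For the density axiom I would argue as in Theorem~\ref{thm:Morse-path}, inserting one extra truncation in the Hilbert target. Since every finite combination of the $e_i$ lies in $H_\infty=\cap_j H_j$, any $\varphi\in C_c^\infty(\R,V_N)$ is smooth, compactly supported and $H_\infty$-valued, hence lies in $E_m$ for every $m$; thus $\bigcup_N C_c^\infty(\R,V_N)\subset E_\infty$. Given $v\in E_k$, I first replace $v$ by $\pi_N v$, which converges to $v$ in every factor norm $W^{i,p}_{\delta_k}(\R,H_{k-i})$ as $N\to\infty$ by dominated convergence in the mode sum (again $\pi_N$ commutes with $\partial_t$ and $\gamma_{\delta_k}$), and then approximate the finite-dimensional-target function $\pi_N v\in\bigcap_{i=0}^k W^{i,p}_{\delta_k}(\R,V_N)$ by elements of $C_c^\infty(\R,V_N)$ through mollification and a weighted cutoff, which converge simultaneously in all $k+1$ norms exactly as in the Morse case. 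This shows $E_\infty$ is dense in each $E_k$ and completes the verification of both axioms.

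The main obstacle, and the step I would be most careful about, is the compactness axiom: neither the fractal argument (target truncation, which fails to control oscillations in $t$) nor the Morse argument (domain truncation, whose Rellich step breaks down for an infinite-dimensional target) suffices on its own. The content is that one must use a derivative gain and a target compactification at the same time -- an Aubin--Lions--Simon phenomenon -- and the clean way to package it is to reduce to a finite-dimensional target by $\pi_N$ (justified because the weight $f$ is unbounded) and to a bounded interval by $\beta_T$ (justified because $\delta_{k+1}>\delta_k$), landing precisely in the finite-dimensional Rellich situation already used for the Morse path spaces.
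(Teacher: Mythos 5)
Your proof is correct, and it uses the same two mechanisms as the paper's proof --- truncation of the Hilbert target by $\pi_N$, paid for by the weight gain from $H_{k+1-i}$ to $H_{k-i}$; truncation of the domain, paid for by $\delta_{k+1}>\delta_k$; and Theorem~\ref{thm:compact-limit-op} to pass to the norm limit --- but you package them quite differently. The paper is modular and inductive: it first proves the single base case $W^{1,p}_\delta(\R,H_0)\cap L^p_\delta(\R,H_1)\INTO L^p(\R,H_0)$ (Lemma~\ref{le:Floer-path-comp-k=1}) by composing $\pi_N$ with the already-established Morse compactness of $W^{1,p}_\delta(\R,V_N)\INTO L^p(\R,V_N)$ (Lemma~\ref{le:Morse-path-compact}); it then climbs up in $k$ by extracting subsequences of $v_\nu$ and $\dot v_\nu$ separately and identifying the limit of the derivatives as the weak derivative of the limit, as in~(\ref{eq:w=dot-v}) (Lemma~\ref{le:Floer-path-comp-k}); and it removes the assumption $\delta_{k-1}=0$ only at the very end, by conjugating with multiplication by $\gamma_{\delta_{k-1}}$ as in diagram~(\ref{eq:morse-coboundary}). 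You instead reduce to one factor $G_i$ at a time and run a single two-parameter norm-limit argument with the approximants $\beta_T\pi_N$ and the explicit error $f(N)^{-1/2}+e^{-(\delta_{k+1}-\delta_k)T}$; this avoids both the induction and the conjugation diagram and makes the Aubin--Lions trade-off (one derivative against one level of target regularity) visible in one estimate, at the price of handling the Leibniz terms from $\beta_T$ and the extension-by-zero step in $W^{i,p}$ yourself, plus the (easy) remark that the subsequential limits in the various $G_i$ coincide. Two minor notes: your exponent $f(N)^{-1/2}$ is the correct one for the norm $\norm{h}_{H_m}^2=\sum_\nu f(\nu)^m h_\nu^2$ (the paper's displayed bound $1/f(N)$ is off by a square, harmlessly, since both tend to zero); and when extending $\beta_T\pi_N v$ by zero you should either perform the multiplication by $\beta_T$ after the Rellich step or restrict the extension operator to the closed subspace of functions supported in the interior of $[-2T,2T]$. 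Your density argument supplies the $\pi_N$-plus-mollification details that the paper only asserts.
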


\begin{proof}
Density: Consider the dense subset $V$ of $\ell^2$
that consists of all finite sums
\[
     V:=\biggl\{\text{$\sum_{i=1}^N a_i e_i$ $\Big|$
     $N\in\N$, $a_1,\dots,a_N\in\R$}\biggr\}
     \subset \ell^2.
\]
The inclusions
$
     V
     \subset
     \bigcap_{k=0}^\infty \ell^2_{f^k}
     \subset \ell^2_{f^k}
$
together with density of $V$ in $\ell^2_{f^k}$
implies density of $V$ in each weighted Hilbert space $\ell^2_{f^k}$.
The inclusions
\[
     C^\infty_{\rm c}(\R,V)
     \subset
     E_\infty:=\bigcap_{k=0}^\infty E_k
     \subset E_k
\]
together with density of the set $C^\infty_{\rm c}(\R,V)$
in the Banach space $E_k$
implies density of $E_\infty$ in each level $E_k$.

Compact inclusions: Proposition~\ref{prop:Floer-path-compact-k}.
\end{proof}

\begin{proposition}[Compact inclusions]\label{prop:Floer-path-compact-k}
Suppose $k\in\N$ and $p\in(1,\infty)$. For non-negative reals
$\delta_k>\delta_{k-1}$ the inclusion
\[
     I:E_k=\bigcap_{i=0}^k W^{i,p}_{\delta_k}(\R,H_{k-i})
     \to
     \bigcap_{i=0}^{k-1} W^{i,p}_{\delta_{k-1}}(\R,H_{k-1-i})=E_{k-1}
\]
is a compact linear operator.
\end{proposition}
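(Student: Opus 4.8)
The plan is to reduce the compactness of $I:E_k\to E_{k-1}$ to a single analytic input, a weighted Aubin--Lions--Simon lemma for Hilbert-valued Sobolev functions, and then to feed in the two compactnesses already available: the fractal-scale compactness $H_{j+1}\INTO H_j$ from Theorem~\ref{thm:fractal-scale}, and the exponential-weight tail estimate from the proof of Lemma~\ref{le:Morse-path-compact}. Since $E_{k-1}=\bigcap_{j=0}^{k-1}W^{j,p}_{\delta_{k-1}}(\R,H_{k-1-j})$ carries the maximum norm, it suffices to show that every bounded sequence $(v_\nu)$ in $E_k$ has a subsequence that converges in each of the $k$ component spaces $W^{j,p}_{\delta_{k-1}}(\R,H_{k-1-j})$; extracting successively over the finitely many indices $j=0,\dots,k-1$ then yields one subsequence converging in $E_{k-1}$.

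First I would fix $j\in\{0,\dots,k-1\}$ and isolate the two memberships of $E_k$ that matter for the $j$-th component. Writing $B:=H_{k-1-j}$ and $X:=H_{k-j}=H_{(k-1-j)+1}$, the inclusion $X\INTO B$ is compact by Theorem~\ref{thm:fractal-scale}. Membership in the $i=j$ factor $W^{j,p}_{\delta_k}(\R,X)$ bounds $\partial_t^\ell v_\nu$ in $L^p_{\delta_k}(\R,X)$ for all $\ell\le j$, while membership in the $i=j+1$ factor $W^{j+1,p}_{\delta_k}(\R,B)$ bounds $\partial_t^{\ell+1}v_\nu$ in $L^p_{\delta_k}(\R,B)$ for all $\ell\le j$. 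Thus, for each fixed $\ell\le j$, the sequence $u_\nu:=\partial_t^\ell v_\nu$ is bounded in $L^p_{\delta_k}(\R,X)$ with time-derivative bounded in $L^p_{\delta_k}(\R,B)$, which is exactly the Aubin--Lions situation with the compact embedding $X\INTO B$.

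The core step is therefore the weighted Aubin--Lions claim: if $u_\nu$ is bounded in $L^p_{\delta_k}(\R,X)$ and $\dot u_\nu$ is bounded in $L^p_{\delta_k}(\R,B)$ with $X\INTO B$ compact and $\delta_k>\delta_{k-1}$, then a subsequence converges in $L^p_{\delta_{k-1}}(\R,B)$. I would prove this by the same truncation scheme as in Lemma~\ref{le:Morse-path-compact}: restriction to $[-T,T]$ followed by the classical Aubin--Lions--Simon theorem (applied on the bounded interval with compact embedding $X\INTO B$ and $Y:=B$) produces, for each $T$, a compact operator into $L^p([-T,T],B)$; the tail on $\R\setminus[-T,T]$ is bounded by $e^{-(\delta_k-\delta_{k-1})T}$ exactly as in Lemma~\ref{le:Morse-path-compact}, so these operators converge in norm and the limit is compact by Theorem~\ref{thm:compact-limit-op}. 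Applying this for $\ell=0,1,\dots,j$ and extracting successive subsequences gives convergence of every $\partial_t^\ell v_\nu$, $\ell\le j$, in $L^p_{\delta_{k-1}}(\R,B)$; the weak-derivative identification of the limits, as in~\eqref{eq:w=dot-v}, shows the common limit lies in $W^{j,p}_{\delta_{k-1}}(\R,B)$ and that $v_\nu$ converges there.

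The main obstacle to flag is precisely the infinite-dimensionality of the targets $H_j$: the Rellich--Kondrachov argument used for $\R^n$-valued maps in Lemma~\ref{le:Morse-path-compact} fails outright, since $W^{1,p}([-T,T],H)\INTO L^p([-T,T],H)$ is \emph{not} compact for infinite-dimensional $H$. What rescues compactness is that the intersection structure of $E_k$ supplies extra target regularity -- the $i=j$ factor lands in the strictly smaller space $X=H_{k-j}$, compactly included in $B$ -- and it is the interplay of this target compactness with the one-order time-derivative control from the $i=j+1$ factor, i.e.\ Aubin--Lions rather than plain Rellich, that carries the argument. Everything else, including the harmless scalar weights $\gamma_{\delta}$ that commute with the Hilbert-space structure, is routine and, as with the shift map itself, independent of the growth type $f$.
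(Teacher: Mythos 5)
Your argument is correct, and the outer layers (truncation to $[-T,T]$ with the exponential tail estimate, norm-limits of compact operators via Theorem~\ref{thm:compact-limit-op}, successive subsequence extraction with weak-derivative identification as in~(\ref{eq:w=dot-v})) coincide with the paper's. You also correctly identify the crux: plain Rellich fails for infinite-dimensional targets, and the missing compactness must come from the drop in target regularity between consecutive factors of the intersection. Where you genuinely diverge is in how that innermost compactness on a bounded interval is obtained. You import the Aubin--Lions--Simon theorem as a black box, using only the abstract compactness $H_{j+1}\INTO H_j$; the paper instead proves the needed special case from scratch in Lemma~\ref{le:Floer-path-comp-k=1} by finite-rank approximation in the target: it projects onto the span $V_N$ of the first $N$ basis vectors of $\ell^2$, applies the finite-dimensional Morse case (Lemma~\ref{le:Morse-path-compact}) to the projected sequence, and controls the error $\Id-\pi_N$ in operator norm by $f(N)^{-1/2}$, which tends to zero precisely because $f$ is unbounded. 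The paper then propagates to general $k$ by induction on the derivative order and handles the weight shift $\delta_k>\delta_{k-1}$ at the very end by conjugating with multiplication by $\gamma_{\delta_{k-1}}$ (diagram~(\ref{eq:morse-coboundary})), whereas you build the weight gap directly into the tail estimate and treat each component $j$ of $E_{k-1}$ in one pass. Your route is more general --- it would work for any scale of separable Hilbert (or suitable Banach) spaces with compact inclusions, with no appeal to a common orthogonal basis --- but it rests on a nontrivial external theorem for Banach-space-valued $L^p$ spaces that the paper deliberately avoids, since its appendix develops only the basic $L^p(\R,H)$ and $W^{k,p}(\R,H)$ theory; the paper's finite-rank argument is self-contained and exploits exactly the fractal structure of $\ell^{2,f}$ that it has already set up. If you keep your version, you should state the Hilbert-valued, $L^p$-in-time variant of Aubin--Lions--Simon precisely and give a reference or proof, since that is where all the analytic content now lives.
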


In order to prove the proposition we first prove two lemmas.

\begin{lemma}[$k=1$, $\delta_0=0$]\label{le:Floer-path-comp-k=1}
Pick reals $p>1$ and $\delta>0=\delta_0$. Then the inclusion
\[
     I:E_1=W^{1,p}_\delta(\R,H_0)\cap L^p _\delta(\R,H_1)
     \to
     L^p(\R,H_0)=E_0
\]
is a compact linear operator.
\end{lemma}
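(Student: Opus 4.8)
The compactness of $I$ draws on two independent mechanisms, and the plan is to isolate them. The exponential weight $\gamma_\delta$ forces decay at the two ends of $\R$, while the compact inclusion $H_1\INTO H_0$ supplied by Theorem~\ref{thm:fractal-scale} provides compactness in the Hilbert space fibre. First I would strip off the behaviour at infinity exactly as in the proof of Lemma~\ref{le:Morse-path-compact}. For $T>0$ introduce the restriction operator $R_T\colon E_1\to W^{1,p}([-T,T],H_0)\cap L^p([-T,T],H_1)$, the extension-by-zero operator $E_T\colon L^p([-T,T],H_0)\to L^p(\R,H_0)$, the inclusion $I_T$ on the bounded interval, and set $I^T:=E_TI_TR_T$, so that $(I-I^T)v=v\cdot\mathbf 1_{\{\abs{t}>T\}}$. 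Since $\gamma_\delta(t)=e^{\delta\abs{t}}\ge e^{\delta T}$ for $\abs{t}\ge T\ge 1$, the same computation as in the Morse case gives
\[
     \Norm{I-I^T}\le e^{-\delta T},\qquad T\ge 1,
\]
using only that $\Norm{v}_{L^p_\delta(\R,H_0)}\le\Norm{v}_{W^{1,p}_\delta(\R,H_0)}\le\Norm{v}_{E_1}$. As the compact operators are norm-closed (Theorem~\ref{thm:compact-limit-op}), it then suffices to show that each $I_T$ is compact.

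The heart of the matter is therefore the compactness of the bounded-interval inclusion
\[
     I_T\colon W^{1,p}([-T,T],H_0)\cap L^p([-T,T],H_1)\INTO L^p([-T,T],H_0).
\]
This is an \emph{Aubin--Lions} phenomenon: a sequence $v_\nu$ bounded in the source is bounded in $L^p([-T,T],H_1)$, which through the compact fibre inclusion $H_1\INTO H_0$ controls the spatial oscillation, and simultaneously bounded in $W^{1,p}([-T,T],H_0)$, whose derivative bound controls the oscillation in time. Both ingredients are genuinely needed: the sequence $v_\nu(t)=(\sin\nu t)\,\xi$ with fixed $\xi\in H_1$ is bounded in $L^p([-T,T],H_1)$ yet admits no $L^p([-T,T],H_0)$-convergent subsequence, so fibre compactness without time regularity is not enough, and conversely the scalar one-dimensional Sobolev bound alone does not compactify an infinite-dimensional fibre.

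To make this rigorous I would invoke the vector-valued Fr\'echet--Kolmogorov criterion for relative compactness in $L^p([-T,T],H_0)$. The tightness condition is automatic on the bounded interval. Equicontinuity of time translations follows from the derivative bound, since
\[
     \Norm{v_\nu(\cdot+h)-v_\nu}_{L^p([-T,T-h],H_0)}\le\abs{h}\,\Norm{\dot v_\nu}_{L^p(H_0)} .
\]
The remaining infinite-dimensional condition is that the integrals $\int_A v_\nu\,dt$ over measurable $A\subset[-T,T]$ lie in a fixed relatively compact subset of $H_0$; this holds because these integrals are bounded in $H_1$ by H\"older's inequality applied to the $L^p([-T,T],H_1)$-bound, and $H_1\INTO H_0$ is compact. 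The Fr\'echet--Kolmogorov theorem then yields a subsequence converging in $L^p([-T,T],H_0)$, so $I_T$, and hence $I$, is compact.

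The main obstacle is precisely this bounded-interval step. Unlike the scalar Lemma~\ref{le:Morse-path-compact}, where the one-dimensional Sobolev embedding $W^{1,p}([-T,T])\INTO L^p([-T,T])$ already delivers compactness, here the infinite-dimensional fibre means time regularity is not by itself compactifying, and the compact inclusion $H_1\INTO H_0$ must be fed in through a genuine Aubin--Lions argument. Verifying the ``relatively compact local averages'' condition for $H_0$-valued $L^p$-functions is the one technical point that goes beyond the scalar theory; everything else transfers verbatim from the Morse case.
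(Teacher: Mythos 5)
Your argument is correct, but it is organized in the opposite order from the paper's and rests on a different key tool. You cut off in \emph{time} first, reducing to a bounded interval, and then handle the infinite\-/dimensional fibre there by an Aubin--Lions/vector-valued Fr\'echet--Kolmogorov argument (Simon's criterion: equicontinuity of translations from the $W^{1,p}(\R,H_0)$ bound, plus relative compactness of the local averages $\int_A v_\nu\,dt$ in $H_0$ obtained by pushing them into a bounded set of $H_1$ via H\"older and using the compact inclusion $H_1\INTO H_0$). The paper instead truncates in the \emph{fibre} first: it composes with the finite-rank spectral projections $\pi_N$ onto $V_N=\SPAN\{e_1,\dots,e_N\}$, observes that the resulting operator $I^N=J_NI_N\Pi_N$ is compact because it factors through the already-proved Morse case (Lemma~\ref{le:Morse-path-compact} with finite-dimensional target $V_N$, inside which the time cutoff is hidden), and then shows $\Norm{I-I^N}\le 1/f(N)\to 0$ using only the $L^p_\delta(\R,H_1)$ component of the $E_1$-norm, concluding by norm-closedness of the compact operators (Theorem~\ref{thm:compact-limit-op}). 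The paper's route is more self-contained -- it needs nothing beyond the scalar Sobolev embedding and the appendix on limits of compact operators, and it reuses the Morse lemma as a black box -- whereas yours imports the vector-valued compactness criterion as an external theorem. In exchange, your version makes the two compactness mechanisms (decay at infinity versus fibre compactness) explicit and separate, your counterexample $v_\nu(t)=(\sin\nu t)\,\xi$ correctly shows both are needed, and your argument would survive in situations where $H_1\INTO H_0$ is compact but not diagonalized by a common orthogonal basis, which the paper's projection argument uses (though for Hilbert scales this is no real loss, since any compact inclusion is a norm limit of finite-rank maps). Both proofs are sound; only the supporting citation for the vector-valued Fr\'echet--Kolmogorov theorem would need to be supplied.
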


Recall from Example~\ref{eq:path-Floer} that the norm on an
intersection of Banach spaces is the maximum of the individual norms.

\begin{proof}[Proof of Lemma~\ref{le:Floer-path-comp-k=1} ($k=1$)]
Denote by $e_i=(0,\dots,0,1,0,\dots)$ the sequence whose members are
all $0$ except for member $i$ which is $1$.
The set of all $e_i$ not only form an orthonormal basis of the Hilbert
space $H_0=\ell^2$, but simultaneously an orthogonal basis of
$H_1=\ell^2_f$, although not of unit length any more.

For $N\in\N$ consider the subspace
$V_N=\SPAN\{e_1,\dots,e_N\}\subset H_0$ of finite dimension
and the corresponding orthogonal projection $\pi_N:H_0=\ell^2\to V_N$.
Its restriction $\pi_N|_{\ell^2_f}:H_1=\ell^2_f\to V_N$
is also an orthogonal projection.

Define a linear projection by
\begin{equation*}
\begin{split}
     \Pi_N:W^{1,p}_\delta(\R,H_0)\cap L^p_\delta(\R,H_1)
   &\to W^{1,p}_\delta(\R,V_N)
     \\
     v
   &\mapsto \pi_N\circ v
\end{split}
\end{equation*}
The linear operator given by inclusion and denoted by
\[
     I_N: W^{1,p}_\delta(\R,V_N)\INTO L^p(\R,V_N),\quad
     v\mapsto v,
\]
is compact by Lemma~\ref{le:Morse-path-compact} for $I=I_N$.
The inclusion $j_N:V_N\INTO \ell^2=H_0$ induces
the inclusion
\[
     J_N: L^p(\R,V_N)\INTO L^p(\R,H_0),\quad
     v\mapsto j_N\circ v.
\]
The inclusion given by composition of bounded linear operators
\[
     I^N:=J_N I_N \Pi_N:E_1=W^{1,p}_\delta(\R,H_0)\cap L^p_\delta(\R,H_1)
     \to
     L^p(\R,H_0)=E_0
\]
is compact since $I_N$ is compact.

To see that $I^N$ converges to $I$ in the norm topology
observe that
\[
     \Norm{I-I^N}_{\Ll(E_1,E_0)}
     =\sup_{\Norm{v}_{E_1}=1} \Norm{(I-I^N)v}_{E_0}
     =\sup_{\Norm{v}_{E_1}=1} \Norm{(\Id-\pi_N)v}_{L^p(\R,H_0)}.
\]
Observe that
\begin{equation*}
\begin{split}
    \Norm{(\Id-\pi_N)v}_{L^p(\R,H_0)}
   &=\biggl(\int_{-\infty}^\infty 
     \underbrace{\Norm{(\Id-\pi_N)v(s)}_{H_0}^p}
     _{\le\frac{1}{f(N)^p}\norm{v(s)}_{H_1}^p}
     \, ds
     \biggr)^{1/p}\\
   &\le\frac{1}{f(N)}
     \Norm{v}_{L^p_\delta(\R,H_1)}\\
   &\le\frac{1}{f(N)}
     \Norm{v}_{W^{1,p}_\delta(\R,H_0)\cap L^p_\delta(\R,H_1)}.
\end{split}
\end{equation*}
This proves that $\norm{I-I^N}_{\Ll(E_1,E_0)}\le 1/f(N)$.
By unboundedness of $f$ the sequence of compact
operators $I^N$ converges to $I$ in norm. Thus the limit $I$ is
compact, too, by Theorem~\ref{thm:compact-limit-op}.
\end{proof}

In view of the fractal structure, i.e. all levels are self-similar
(isometrically isomorphic), an immediate Corollary
to Lemma~\ref{le:Floer-path-comp-k=1} is the following.

\begin{corollary}\label{cor:bjbnjgft}
Pick reals $p>1$ and $\delta>0=\delta_0$. Then each of the
inclusions
\[
     I:W^{1,p}_\delta(\R,H_k)\cap L^p_\delta(\R,H_{k+1})
     \to
     L^p(\R,H_k),\quad k\in\N_0,
\]
is a compact linear operator.
\end{corollary}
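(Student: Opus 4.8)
The plan is to deduce the general level $k$ from the base case $k=0$ (Lemma~\ref{le:Floer-path-comp-k=1}) by conjugating with the fractal rescaling isomorphism, so that no new analysis is needed. The starting point is the self-similarity recorded in Example~\ref{eq:sc-ell2}: the map $\phi_k:H_0\to H_k$, $e_i\mapsto f(i)^{-k/2}e_i$, is an isometric Hilbert space isomorphism which, being the same rescaling on every level, simultaneously restricts to isometric isomorphisms $H_i\to H_{k+i}$ for all $i\in\N_0$. The crucial feature is that a single fixed linear map carries $H_0$ onto $H_k$ and $H_1$ onto $H_{k+1}$ at once, which is exactly what the intersection space in the corollary requires.

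Next I would push $\phi_k$ forward to the path spaces by postcomposition, $\Phi_k(v):=\phi_k\circ v$. Since $\phi_k$ is a fixed bounded linear map, it commutes with weak differentiation, so $(\phi_k\circ v)'=\phi_k\circ v'$, and with multiplication by the scalar weight $\gamma_\delta$, so $\gamma_\delta\,(\phi_k\circ v)=\phi_k\circ(\gamma_\delta v)$. Using that $\phi_k$ is a levelwise isometry, $\Phi_k$ therefore restricts to isometric isomorphisms
\[
   \Phi_k:W^{1,p}_\delta(\R,H_0)\to W^{1,p}_\delta(\R,H_k),\quad
   \Phi_k:L^p_\delta(\R,H_1)\to L^p_\delta(\R,H_{k+1}),\quad
   \Phi_k:L^p(\R,H_0)\to L^p(\R,H_k).
\]
Taking maxima of the individual norms (the intersection norm of Example~\ref{eq:path-Floer}), the first two combine to an isometric isomorphism $\Phi_k:E_1\to W^{1,p}_\delta(\R,H_k)\cap L^p_\delta(\R,H_{k+1})$.

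Finally I would observe that postcomposition with a fixed map commutes with the inclusion, so that the square
\begin{equation*}
\begin{tikzcd}[column sep=large]
W^{1,p}_\delta(\R,H_0)\cap L^p_\delta(\R,H_1)
\arrow[r, "I_0"]
\arrow[d, "\Phi_k"', "\simeq"]
  & L^p(\R,H_0)
    \arrow[d, "\simeq"', "\Phi_k"]
\\
W^{1,p}_\delta(\R,H_k)\cap L^p_\delta(\R,H_{k+1})
\arrow[r, "I"']
  & L^p(\R,H_k)
\end{tikzcd}
\end{equation*}
commutes, where $I_0$ is the compact inclusion of Lemma~\ref{le:Floer-path-comp-k=1} and $I$ is the inclusion of the corollary. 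Hence $I=\Phi_k\circ I_0\circ\Phi_k^{-1}$, and composing the compact operator $I_0$ with the isomorphisms $\Phi_k,\Phi_k^{-1}$ keeps it compact. There is no genuine obstacle here: the statement is indeed immediate once the conjugation is set up. The only point needing a line of care is that the one map $\phi_k$ simultaneously induces isometries on all three function spaces, i.e.\ that postcomposition by a bounded linear Hilbert-space isomorphism preserves the weighted $W^{1,p}$-structure, which is the content of the fractal property together with the fact that $\phi_k$ commutes with $\d/\d s$ and with the scalar weight $\gamma_\delta$.
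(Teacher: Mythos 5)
Your proposal is correct and is essentially the paper's own argument: the paper derives the corollary in one line by invoking the fractal self-similarity of the levels $H_i\simeq H_{k+i}$ to reduce to the $k=0$ case of Lemma~\ref{le:Floer-path-comp-k=1}, and your conjugation by $\Phi_k$ is exactly the detailed version of that reduction. No gaps.
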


\begin{lemma}[$k\in\N$, $\delta_0=0$]\label{le:Floer-path-comp-k}
For $p\in(1,\infty)$ and $\delta>0$ the inclusion
\[
     I:\bigcap_{i=0}^k W^{i,p}_{\delta}(\R,H_{k-i})
     \to
     \bigcap_{i=0}^{k-1} W^{i,p}(\R,H_{k-1-i})
\]
is a compact linear operator. The Banach spaces $W^{i,p}_{\delta}$ are
defined by~(\ref{eq:W-kp-delta}).
\end{lemma}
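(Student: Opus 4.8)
The plan is to mimic the proof of Lemma~\ref{le:Floer-path-comp-k=1}, replacing its use of the $k=1$ Morse compactness by the general-$k$ statement Lemma~\ref{le:Morse-path-compact-k}. Write $X:=\bigcap_{i=0}^k W^{i,p}_\delta(\R,H_{k-i})$ for the source and $Y:=\bigcap_{i=0}^{k-1}W^{i,p}(\R,H_{k-1-i})$ for the target. I would approximate the inclusion $I$ in the operator norm of $\Ll(X,Y)$ by compact operators $I^N$ built from the orthogonal projection $\pi_N:\ell^2\to V_N$ onto $V_N:=\SPAN\{e_1,\dots,e_N\}$, and then invoke Theorem~\ref{thm:compact-limit-op} to deduce that the norm limit $I$ is itself compact. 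The two compactness mechanisms of the paper decouple cleanly here: each $I^N$ is compact because of the Morse mechanism (Rellich together with the exponential weight $\delta>0$ in the $\R$-direction), whereas the convergence $I^N\to I$ rests on the fractal mechanism, namely the quantitative gain of one Hilbert level.

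Since $\pi_N$ restricts to a norm-decreasing projection on every weighted level $\ell^2_{f^m}$, the assignment $\Pi_N v:=\pi_N\circ v$ is a bounded projection from $X$ onto $\bigcap_{i=0}^k W^{i,p}_\delta(\R,V_N)=W^{k,p}_\delta(\R,V_N)$, where the equality holds because all Hilbert levels coincide on the finite-dimensional space $V_N$. The inclusion $I_N:W^{k,p}_\delta(\R,V_N)\INTO W^{k-1,p}(\R,V_N)$ is compact by Lemma~\ref{le:Morse-path-compact-k} applied with $\R^n$ replaced by $V_N$, and the tautological inclusion $\iota:W^{k-1,p}(\R,V_N)\INTO Y$ is bounded. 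Hence $I^N:=\iota\circ I_N\circ\Pi_N$ is compact; and since $\pi_N$ acts pointwise and commutes with differentiation one has $(I-I^N)v=(\Id-\pi_N)\circ v$.

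It remains to estimate $\Norm{(\Id-\pi_N)\circ v}_{Y}$. The crucial pointwise inequality is that for $x\in\ell^2_{f^{m+1}}$ one has $\Norm{(\Id-\pi_N)x}_{\ell^2_{f^m}}\le f(N)^{-1/2}\Norm{x}_{\ell^2_{f^{m+1}}}$, because $f(\nu)\ge f(N)$ for $\nu>N$ by monotonicity. To exploit this at a target index $i\in\{0,\dots,k-1\}$ and a derivative order $j\le i$, I would read off from the source index $l=i$ that $v\in W^{i,p}_\delta(\R,H_{k-i})$, so that $\partial^j v$ lies in $L^p_\delta(\R,H_{k-i})$ with norm controlled by $\Norm{v}_X$; applying the pointwise gain with $m=k-1-i$ and integrating then yields $\Norm{(\Id-\pi_N)\partial^j v}_{L^p(\R,H_{k-1-i})}\le C f(N)^{-1/2}\Norm{v}_X$. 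Summing over $j\le i$ and maximizing over $i$ gives $\Norm{I-I^N}_{\Ll(X,Y)}\le C' f(N)^{-1/2}$, which tends to $0$ by unboundedness of $f$. The one real subtlety, and the step I would verify most carefully, is exactly this index bookkeeping: one must check that at every target index $i$ the single source index $l=i$ simultaneously supplies the derivative $\partial^j v$ (for all $j\le i$) and leaves one spare Hilbert level, $H_{k-i}$ over $H_{k-1-i}$, so that the fractal estimate can be applied term by term.
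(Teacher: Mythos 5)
Your proof is correct, but it takes a genuinely different route from the paper's. The paper proves this lemma by induction on $k$: given a sequence $v_\nu$ in the unit ball of $E_k$, it applies the $k=1$ case (Lemma~\ref{le:Floer-path-comp-k=1} and its shifted version, Corollary~\ref{cor:bjbnjgft}) separately to $v_\nu$ and to $\dot v_\nu$, extracts subsequences, and identifies the limit of the derivatives with the weak derivative of the limit as in~(\ref{eq:w=dot-v}). You instead run the projection argument of the paper's $k=1$ proof directly at level $k$: the approximants $I^N=\iota\circ I_N\circ\Pi_N$ are compact because the general-$k$ Morse lemma (Lemma~\ref{le:Morse-path-compact-k}) applies on the finite-dimensional target $V_N$, and the convergence $I^N\to I$ in $\Ll(X,Y)$ follows from the one-level fractal gain $\Norm{(\Id-\pi_N)x}_{\ell^2_{f^m}}\le f(N)^{-1/2}\Norm{x}_{\ell^2_{f^{m+1}}}$ together with the index bookkeeping you spell out (source index $l=i$ controls $\partial^j v$ for all $j\le i$ one Hilbert level above the target level $H_{k-1-i}$), plus Theorem~\ref{thm:compact-limit-op}. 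What your route buys: no induction, no diagonal subsequence extraction, a clean separation of the two compactness mechanisms, and a quantitative rate $\Norm{I-I^N}=O(f(N)^{-1/2})$ (your exponent $-1/2$ is in fact the correct one for consecutive levels; the $1/f(N)^p$ appearing in the paper's $k=1$ estimate is a harmless overstatement). What the paper's route buys: it reuses the already-proved $k=1$ Floer lemma and the Morse induction template verbatim, at the cost of the weak-derivative identification step. The only points you should make explicit, both routine and at the paper's own level of rigor, are that $\pi_N$ commutes with weak derivatives and with multiplication by $\gamma_\delta$, and that the unweighted norms are controlled by the $\delta$-weighted ones (since $\gamma_\delta$ and $\gamma_\delta^{-1}$ have bounded derivatives of all orders away from the exponential growth, this is the same norm-equivalence used implicitly in diagram~(\ref{eq:morse-coboundary})).
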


\begin{proof}
Lemma~\ref{le:Floer-path-comp-k}
follows from Lemma~\ref{le:Floer-path-comp-k=1} ($k=1$)
by induction similarly as in the Morse case
(where Lemma~\ref{le:Morse-path-compact-k}
followed from Lemma~\ref{le:Morse-path-compact}).
In order to illustrate the adjustments one has to do,
we show how the case $k=2$ follows from the case $k=1$
which is Lemma~\ref{le:Floer-path-comp-k=1}.

Case $k=1$ $\Rightarrow$ case $k=2$:
Pick a sequence $v_\nu$ in the unit ball of the space
\[
     W^{2,p}_\delta(\R,H_0)\cap
     W^{1,p}_\delta(\R,H_1)\cap
     L^p_\delta(\R,H_2).
\]
So $v_\nu$ is a sequence in the unit ball of
\[
     W^{1,p}_\delta(\R,H_1)\cap
     L^p_\delta(\R,H_2).
\]
Hence by Corollary~\ref{cor:bjbnjgft} for $k=1$
there is a subsequence, still denoted by $v_\nu$,
and an element $v\in L^p(\R,H_1)$ such that
\[
     \text{$v_\nu\longrightarrow v$\quad in $L^p(\R,H_1)$}.
\]
Moreover, the weak derivatives $\dot v_\nu$ form a sequence
in the unit ball of
\[
     W^{1,p}_\delta(\R,H_0)\cap
     L^p_\delta(\R,H_1).
\]
Hence by Lemma~\ref{le:Floer-path-comp-k=1}
there is a subsequence, still denoted by $v_\nu$,
and an element $w\in L^p(\R,H_0)$ such that
\[
     \text{$\dot v_\nu\longrightarrow w$\quad in $L^p(\R,H_0)$}.
\]
Similarly as in~(\ref{eq:w=dot-v}) one gets $w=\dot v$.
Hence $v$ is in $W^{1,p}(\R,H_0)\cap L^p(\R,H_1)$
and $v_\nu\to v$ in  $W^{1,p}(\R,H_0)\cap L^p(\R,H_1)$.
This shows that the inclusion
\[
     W^{2,p}_\delta(\R,H_0)\cap
     W^{1,p}_\delta(\R,H_1)\cap
     L^p_\delta(\R,H_2)
     \INTO
     W^{1,p}(\R,H_1)\cap
     L^p(\R,H_2)
\]
is a compact linear operator.

Case $k\Rightarrow k+1$: Follows along similar lines as
$k=1\Rightarrow k=2$.
\end{proof}

\begin{proof}[Proof of Proposition~\ref{prop:Floer-path-compact-k}]
As in the Morse case, Lemma~\ref{le:Floer-path-comp-k}
implies Proposition~\ref{prop:Morse-path-compact-k}
in view of the commutative diagram~(\ref{eq:morse-coboundary}).
\end{proof}

\appendix

\section{Background from Functional Analysis}

\subsection{Compact operators}
A useful fact to prove compactness of a linear operator
is that the space of compact operators is closed
in the space of bounded linear operators with respect
to the operator norm topology. We use this fact heavily
in Section~\ref{sec:Bsc-str-ex}. For the readers convenience
we recall in this section the proof of this well knwon fact.

Suppose $E$ and $F$ are Banach spaces.
Let $\Ll(E,F)$ be the Banach space of bounded
linear operators $T:E\to F$ whose operator norm defined by
\[
     \Norm{T}=\Norm{T}_{\Ll(E,F)}:=\sup_{\Norm{x}_E=1}\Norm{Tx}_F
\]
is finite. An operator $T\in\Ll(E,F)$ is called \textbf{compact}
if the image under $T$ of any bounded sequence $x_\nu\in E$
admits a convergent subsequence. Since $T$ is linear
it suffices to show this for sequences in the unit ball of $E$.

\begin{theorem}\label{thm:compact-limit-op}
Let $T_\nu\in\Ll(E,F)$ be a sequence of compact linear operators
which converges in the operator topology to $T\in\Ll(E,F)$.
Then $T$ is compact.
\end{theorem}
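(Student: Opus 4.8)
The plan is to prove that the space of compact operators $\Kk(E,F) \subset \Ll(E,F)$ is closed under the operator norm. Given the hypothesis, I want to show directly that $T$ maps the unit ball into a set with compact closure. Since $F$ is a complete metric space, it suffices to show that $T(\text{unit ball})$ is totally bounded, or equivalently that every sequence $(Tx_\nu)$ with $\norm{x_\nu}_E \le 1$ admits a Cauchy (hence convergent) subsequence. The standard tool here is a diagonal argument: I would use the fact that each $T_n$ is compact to extract successively finer subsequences, then pass to the diagonal and combine with the uniform approximation $T_n \to T$ via a three-$\eps$ estimate.

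Let me lay out the key steps in order. First I would fix a bounded sequence $x_\nu \in E$, which I may assume lies in the unit ball. Second, I would extract nested subsequences: since $T_1$ is compact, there is a subsequence $(x_\nu^{(1)})$ so that $(T_1 x_\nu^{(1)})$ converges; since $T_2$ is compact, a further subsequence $(x_\nu^{(2)})$ of $(x_\nu^{(1)})$ makes $(T_2 x_\nu^{(2)})$ converge; and so on inductively. Third, I would take the diagonal sequence $y_\nu := x_\nu^{(\nu)}$, which is eventually a subsequence of every $(x_\nu^{(n)})$, so that $(T_n y_\nu)_\nu$ converges for \emph{every} fixed $n$. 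Fourth, the heart of the matter is the three-term estimate: for any $\eps > 0$, choose $n$ with $\norm{T - T_n}_{\Ll(E,F)} < \eps/3$, then for $\mu,\nu$ large enough that $\norm{T_n y_\mu - T_n y_\nu}_F < \eps/3$, I would bound
\[
     \Norm{T y_\mu - T y_\nu}_F
     \le \Norm{(T - T_n)y_\mu}_F
       + \Norm{T_n y_\mu - T_n y_\nu}_F
       + \Norm{(T_n - T)y_\nu}_F
     < \eps,
\]
using $\norm{y_\mu}_E, \norm{y_\nu}_E \le 1$ to control the outer two terms by $\norm{T-T_n} < \eps/3$. This shows $(Ty_\nu)$ is Cauchy; completeness of $F$ gives convergence, so $T$ is compact.

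The step I expect to require the most care is the interplay in the three-$\eps$ estimate between the uniform operator-norm convergence $T_n \to T$ and the pointwise convergence of $(T_n y_\nu)_\nu$ supplied by compactness. The subtlety is that the index $n$ must be chosen \emph{first}, depending only on $\eps$, and only afterward are $\mu,\nu$ taken large relative to that fixed $n$; the diagonal construction is precisely what guarantees $(T_n y_\nu)_\nu$ converges for that chosen $n$. The two extremal terms are handled uniformly because the $y_\nu$ all sit in the unit ball, which is exactly where the operator-norm hypothesis (rather than mere strong convergence) is essential. The diagonal extraction itself is routine but must be stated carefully enough that $(T_n y_\nu)_\nu$ is genuinely convergent for each fixed $n$.
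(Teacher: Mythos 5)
Your proposal is correct and follows essentially the same route as the paper: a diagonal extraction making $(T_n y_\nu)_\nu$ convergent for every fixed $n$, followed by the three-$\eps$ triangle-inequality estimate using the operator-norm convergence $T_n\to T$ on the unit ball and completeness of $F$. No substantive differences.
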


\begin{proof}
Let $x_k\in E$ be a sequence in the unit ball of $E$. 
Because each $T_\nu$ is compact, by a diagonal argument there is a
subsequence $x_{k_j}$ such that each image sequence $(T_\nu
x_{k_j})_j$ converges in $F$.

We claim that $(T x_{k_j})_j$ is a Cauchy sequence in $F$.
In order to see this pick $\eps>0$.
Because $T_\nu\to T$ in the operator topology, there is $\nu_0\in\N$
such that $\norm{T-T_{\nu_0}}\le\eps/3$.
Because the sequence $(T_{\nu_0} x_{k_j})_j$ converges in $F$,
it is a Cauchy sequence. In particular, there is $j_0\in\N$
such that for every $j_1,j_2\ge j_0$ we have
\[
    \Norm{T_{\nu_0} x_{k_{j_1}}-T_{\nu_0} x_{k_{j_2}}}_F\le\eps/3.
\]
We estimate
\begin{equation*}
\begin{split}
   &\Norm{T x_{k_{j_1}}-T x_{k_{j_2}}}_F\\
   &\le\Norm{T x_{k_{j_1}}-T_{\nu_0} x_{k_{j_1}}}_F
     +\Norm{T_{\nu_0} x_{k_{j_1}}-T_{\nu_0} x_{k_{j_2}}}_F
     +\Norm{T_{\nu_0} x_{k_{j_2}}-T x_{k_{j_2}}}_F\\
   &\le\Norm{T -T_{\nu_0}}\cdot\Norm{x_{k_{j_1}}}_E
     +\Norm{T_{\nu_0} x_{k_{j_1}}-T_{\nu_0} x_{k_{j_2}}}_F
     +\Norm{T -T_{\nu_0}}\cdot\Norm{x_{k_{j_2}}}_E\\
   &\le \eps/3\cdot 1+\eps/3+\eps/3\cdot 1
     =\eps.
\end{split}
\end{equation*}
This shows that $T x_{k_{j}}$ is a Cauchy sequence in $F$.
Since $F$ is a Banach space each Cauchy sequence converges.
Hence the linear operator $T$ is compact.
\end{proof}

\subsection{Hilbert space valued Sobolev theory}
\label{sec:Hilb-Sob}

In this appendix we recall Sobolev theory for Hilbert space
valued functions in the separable case. For a more general
treatment, which as well treats non-separable Banach spaces,
see e.g.~\cite{Cohn:1993a,evans:1998a,Kreuter:2015a,Yosida:1995a}.
In particular, we recall that Hilbert valued Sobolev spaces
are complete and therefore Banach spaces.

Suppose $H$ is a separable Hilbert space, i.e. $H$ is
isometrically isomorphic to $\ell^2$ or of finite dimension,
with inner product $\INNER{\cdot}{\cdot}$ and induced norm
$\norm{\cdot}$. Throughout $H$ is endowed with the 
\textbf{\boldmath Borel $\sigma$-algebra} $\Bb=\Bb(H)$,
i.e. the smallest $\sigma$-algebra that contains the open sets.
Let $\Bb=\Bb(\R)$ be the Borel $\sigma$-algebra
on the real line and $\Aa=\Aa(\R)$ be the Lebesgue $\sigma$-algebra.
The elements of a $\sigma$-algebra are called \textbf{measurable sets}.
Recall that a map is called measurable
if pre-images of measurable sets are measurable.

\subsection*{The Banach space \boldmath$L^1(\R,H)$}

We need the following theorem of Pettis
which makes use of the fact that our
Hilbert space is separable.

\begin{theorem}[{Pettis~\cite{Pettis:1938a}}]\label{thm:Pettis}
Consider a Hilbert space valued function $f:\R\to H$.
The following assertions are equivalent.
\begin{itemize}
\item[1)]
  Every function $\inner{f}{x} :(\R,\Aa)\to(\R,\Bb)$, where $x\in H$,
  is measurable.
\item[2)]
  The map $f:(\R,\Aa)\to(H,\Bb)$ is measurable.
\end{itemize}
\end{theorem}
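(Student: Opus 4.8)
The plan is to establish the two implications separately, with separability of $H$ entering crucially only in the direction 1) $\Rightarrow$ 2). The implication 2) $\Rightarrow$ 1) is immediate: for fixed $x\in H$ the linear functional $\inner{\cdot}{x}:H\to\R$ is continuous, hence Borel measurable as a map $(H,\Bb)\to(\R,\Bb)$, so the composition $\inner{f}{x}=\inner{\cdot}{x}\circ f$ of the $\Aa$-$\Bb$ measurable map $f$ with a Borel map is again measurable. The substance of the theorem is the converse.

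For that direction, the first step is to turn weak measurability into measurability of norms. Fixing an orthonormal basis $(e_n)_{n\in\N}$ of the separable Hilbert space $H$, Parseval's identity gives
\[
     \Norm{f(t)}^2=\sum_{n=1}^\infty \inner{f(t)}{e_n}^2 .
\]
Each summand $t\mapsto\inner{f(t)}{e_n}^2$ is measurable by hypothesis 1), so the pointwise limit of the partial sums $t\mapsto\Norm{f(t)}^2$ is measurable, and hence so is $t\mapsto\Norm{f(t)}$. For an arbitrary fixed $a\in H$ one then expands
\[
     \Norm{f(t)-a}^2=\Norm{f(t)}^2-2\inner{f(t)}{a}+\Norm{a}^2 ,
\]
whose three terms are respectively measurable (just shown), measurable by 1), and constant; thus $t\mapsto\Norm{f(t)-a}$ is measurable for every $a\in H$.

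The final step uses separability a second time, now of $H$ as a metric space. Let $D\subset H$ be a countable dense set. Every open subset of $H$ is a countable union of open balls $B(a,r)$ with centres $a\in D$ and rational radii $r$, so the Borel $\sigma$-algebra $\Bb(H)$ is generated by this countable family of balls. Since
\[
     f^{-1}\bigl(B(a,r)\bigr)=\{t\in\R\mid \Norm{f(t)-a}<r\}
\]
is measurable by the previous step, and since $f^{-1}$ commutes with countable unions, the preimage of every open set is measurable; this is precisely the assertion that $f:(\R,\Aa)\to(H,\Bb)$ is measurable. I expect the main obstacle to be conceptual rather than computational: one must recognise that separability is exactly what makes both reductions work, allowing the norm to be written as a countable sum of measurable functions and allowing Borel measurability to be tested on only countably many ball-preimages. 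Without separability neither the Parseval expansion nor the countable generation of $\Bb(H)$ by balls is available, which is why the theorem genuinely requires the separable hypothesis.
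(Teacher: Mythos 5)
Your proposal is correct. Note, however, that the paper does not actually prove this theorem: it is quoted from Pettis, and the only argument supplied is Remark~\ref{rem:hghgj-1}, which sketches exactly your easy direction 2)~$\Rightarrow$~1) (continuity of $\inner{\cdot}{x}$ plus stability of measurability under composition). So for that direction you coincide with the paper, and for the substantive direction 1)~$\Rightarrow$~2) you supply a proof where the paper defers to the literature. Your argument for that direction is the standard one in the separable Hilbert setting and is sound: Parseval with respect to a countable orthonormal basis exhibits $t\mapsto\Norm{f(t)}^2$ as a pointwise limit of measurable partial sums; the polarization-type expansion $\Norm{f(t)-a}^2=\Norm{f(t)}^2-2\inner{f(t)}{a}+\Norm{a}^2$ then gives measurability of $t\mapsto\Norm{f(t)-a}$ for each fixed $a$; and the Lindel\"of property of the separable metric space $H$ reduces measurability of $f$ to the countably many ball preimages $\{t\mid\Norm{f(t)-a}<r\}$ with $a$ in a countable dense set and $r$ rational. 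It is worth being aware that this is genuinely easier than Pettis' theorem in its general Banach-space form, where one has no orthonormal basis and must instead use a countable norming family of functionals (via Hahn--Banach) together with essential separable-valuedness of $f$; your proof trades that generality for a clean, self-contained argument that exploits the Hilbert structure, which is all that the paper needs. Your closing observation about where separability enters (twice: once for Parseval, once for the countable generation of $\Bb(H)$ by balls) is accurate.
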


\begin{remark}\label{rem:hghgj-1}
That 2) implies 1) follows from
two facts in measure theory. Firstly,
continuous maps are measurable and, secondly,
compositions of measurable maps are measurable.
\end{remark}

\begin{remark}\label{rem:hghgj-2}
By the same reasoning as in Remark~\ref{rem:hghgj-1}
if $f:\R\to H$ meets one, hence both, conditions
in Theorem~\ref{thm:Pettis} it follows that the map
$\norm{f}:(\R,\Aa)\to(\R,\Bb)$ is measurable.
\end{remark}

\begin{definition}\label{def:integrable}
A Hilbert space valued function $f:\R\to H$ is called
\textbf{integrable} if it satisfies the following two conditions.
\begin{itemize}
\item[(i)]
  Every function $\inner{f}{x} :(\R,\Aa)\to(\R,\Bb)$, where $x\in H$,
  is measurable.
\item[(ii)]
  The integral $\int_\R\norm{f(t)} dt<\infty$ is finite.
\end{itemize}
By $\Ll^1(\R,H)$ we denote the set  of integrable Hilbert space
valued functions.
\end{definition}

\begin{proposition}\label{prop:vector-space}
The set $\Ll^1(\R,H)$ is a real vector space.
\end{proposition}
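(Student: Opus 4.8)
The plan is to verify that $\Ll^1(\R,H)$ is a linear subspace of the vector space of all maps $\R\to H$ under the pointwise operations $(f+g)(t):=f(t)+g(t)$ and $(\lambda f)(t):=\lambda f(t)$. Since the zero map trivially satisfies conditions (i) and (ii) of Definition~\ref{def:integrable}, it suffices to establish closure under addition and under scalar multiplication, after which the remaining vector-space axioms hold automatically because the operations are inherited pointwise from the vector space $H$.

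First I would treat closure under addition. Given $f,g\in\Ll^1(\R,H)$ and $x\in H$, the identity $\inner{f+g}{x}=\inner{f}{x}+\inner{g}{x}$ exhibits $\inner{f+g}{x}$ as a sum of two real-valued measurable functions, hence measurable; this verifies condition (i) for $f+g$. Consequently, by Remark~\ref{rem:hghgj-2}, the real-valued function $\norm{f+g}$ is itself measurable, so that the integral in condition (ii) is meaningful. The triangle inequality $\norm{(f+g)(t)}\le\norm{f(t)}+\norm{g(t)}$ then yields $\int_\R\norm{(f+g)(t)}\,dt\le\int_\R\norm{f(t)}\,dt+\int_\R\norm{g(t)}\,dt<\infty$, which is condition (ii).

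Next I would handle scalar multiplication. For $\lambda\in\R$ the identity $\inner{\lambda f}{x}=\lambda\inner{f}{x}$ shows that $\lambda f$ satisfies (i); hence $\norm{\lambda f}$ is measurable by Remark~\ref{rem:hghgj-2}, and the homogeneity estimate $\int_\R\norm{\lambda f(t)}\,dt=\abs{\lambda}\int_\R\norm{f(t)}\,dt<\infty$ supplies (ii). Together with the addition step this shows that $\Ll^1(\R,H)$ is closed under arbitrary real linear combinations.

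The only genuinely non-formal point is the well-definedness of the integral in condition (ii): one must know that $\norm{f+g}$ and $\norm{\lambda f}$ are measurable before the triangle inequality and homogeneity can be invoked. This is precisely where separability of $H$ is used, through Pettis's Theorem~\ref{thm:Pettis} and its consequence Remark~\ref{rem:hghgj-2}; everything else is the routine observation that measurability and finiteness of the norm-integral are preserved under linear combinations.
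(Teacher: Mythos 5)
Your proposal is correct and follows essentially the same route as the paper's proof: measurability of $\inner{\lambda f+\mu g}{x}$ from linearity, then Pettis' Theorem~\ref{thm:Pettis} together with Remark~\ref{rem:hghgj-2} to get measurability of the norm, and finally the triangle inequality for finiteness of the integral. The only cosmetic difference is that you treat addition and scalar multiplication separately while the paper handles a general linear combination $\lambda f+\mu g$ in one step.
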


\begin{proof}
Suppose $f,g\in\Ll^1(\R,H)$ and $\lambda,\mu\in\R$
we need to show that $\lambda f+\mu g$ satisfies (i) and (ii)
in the definition.
Part (i) follows from the fact that
the space of measurable functions $(\R,\Aa)\to(\R,\Bb)$ is a vector space.
To prove part (ii) we observe that by part (i)
combined with Pettis' Theorem~\ref{thm:Pettis}, 1)$\Rightarrow$
2),
and Remark~\ref{rem:hghgj-2} the map
$
     \norm{\lambda f+\mu g}: (\R,\Aa)\to(\R,\Bb)
$
is measurable. Therefore the integral
\[
     \int_\R \Norm{\lambda f+\mu g}
     \le \Abs{\lambda}\int_\R \Norm{f}
     +\Abs{\mu}\int_\R\Norm{g}
     <\infty
\]
is finite. This proves part (ii), hence the proposition.
\end{proof}

\begin{proposition}\label{prop:Ll(R,H)-complete}
The vector space $\Ll^1(\R,H)$ is complete with respect
to the semi-norm defined by
\[
     \Norm{f}_1:=\Norm{f}_{\Ll^1(\R,H)}:=\int_\R\Norm{f(t)} dt.
\]
\end{proposition}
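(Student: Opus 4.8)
The plan is to run the classical Riesz--Fischer completeness argument, adapted to the Hilbert-valued setting, with Pettis' Theorem~\ref{thm:Pettis} supplying the one genuinely vector-valued step. First I would take a Cauchy sequence $(f_n)$ in $\Ll^1(\R,H)$ and extract a rapidly converging subsequence $(f_{n_k})$ with $\Norm{f_{n_{k+1}}-f_{n_k}}_1\le 2^{-k}$. It then suffices to produce a limit for this subsequence, since an elementary triangle-inequality argument shows that a Cauchy sequence possessing a convergent subsequence converges to the same limit.

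The heart of the argument is a monotone-convergence estimate. Each difference $f_{n_{k+1}}-f_{n_k}$ is measurable, so by Remark~\ref{rem:hghgj-2} the pointwise $H$-norms are measurable, and the partial sums $g_K(t):=\sum_{k=1}^K\Norm{f_{n_{k+1}}(t)-f_{n_k}(t)}$ form an increasing sequence of nonnegative measurable functions. By the monotone convergence theorem,
\[
   \int_\R g\;=\;\lim_{K\to\infty}\int_\R g_K
   \;\le\;\sum_{k=1}^\infty\Norm{f_{n_{k+1}}-f_{n_k}}_1\;\le\;1,
\]
where $g:=\lim_K g_K$. Hence $g(t)<\infty$ for almost every $t$, and on that full-measure set the series $\sum_k\bigl(f_{n_{k+1}}(t)-f_{n_k}(t)\bigr)$ converges absolutely in $H$, hence converges because $H$ is complete. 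Telescoping, $f_{n_K}(t)$ converges in $H$ for almost every $t$; I define $f(t)$ to be this limit where it exists and $0$ otherwise.

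The step I expect to be the main obstacle --- and the only place where separability of $H$ is essential --- is verifying that the limit $f$ is again measurable in the sense of Definition~\ref{def:integrable}. For each fixed $x\in H$ the scalar function $\inner{f}{x}$ is the almost-everywhere pointwise limit of the measurable functions $\inner{f_{n_k}}{x}$, hence measurable; Pettis' Theorem~\ref{thm:Pettis}, implication 1)$\IMP$2), then upgrades this to measurability of $f$ itself. Finally I would close the argument by dominated convergence: the estimate $\Norm{f(t)-f_{n_K}(t)}\le\sum_{k\ge K}\Norm{f_{n_{k+1}}(t)-f_{n_k}(t)}\le g(t)$ shows the integrands are dominated by the integrable function $g$ and tend to $0$ pointwise almost everywhere, so $\Norm{f-f_{n_K}}_1\to 0$. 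Since $f-f_{n_1}$ is dominated by $g$ and thus lies in $\Ll^1(\R,H)$, Proposition~\ref{prop:vector-space} gives $f=f_{n_1}+(f-f_{n_1})\in\Ll^1(\R,H)$; combined with $f_{n_K}\to f$ and the reduction from the first paragraph, this shows $f_n\to f$ in the semi-norm $\Norm{\cdot}_1$, completing the proof.
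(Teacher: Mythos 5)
Your proposal is correct and follows essentially the same Riesz--Fischer strategy as the paper: rapid subsequence, monotone convergence to get an a.e.\ finite dominating sum, pointwise limits via completeness of $H$, and Pettis' theorem for measurability of the limit. The only (immaterial) divergence is at the last step, where you invoke dominated convergence with the dominating function $g$ while the paper uses Fatou's lemma; you are also slightly more explicit than the paper about why $f\in\Ll^1(\R,H)$ and about passing from the subsequence back to the full Cauchy sequence.
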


\begin{proof}
Fix a Cauchy sequence $f_\nu\in \Ll^1(\R,H)$.
Motivated by the real valued case, see e.g.
Rudin~\cite[Ch.\,3]{Rudin:1987a}
or~\cite[Thm.\,4.9]{Salamon:2016a},
pick a subsequence, still denoted by $f_\nu$, such that
each difference $\Norm{f_{\nu}-f_{\nu+1}}_1$ is less than $2^{-\nu}$.
That is
\begin{equation}\label{eq:nkjnkj}
     \Norm{f_{\nu}-f_{\nu+1}}_{\Ll^1(\R,H)}
     =\int_\R \underbrace{\Norm{f_{\nu}(t)-f_{\nu+1}(t)}}_{=: g_\nu(t)} dt
     \le 2^{-\nu}, \quad \nu \in\N.
\end{equation}

\noindent
\textbf{Claim 1.} 
The infinite sum of the $g_\nu$'s is finite outside a null set.
More precisely, there is a 
function $g:\R\to [0,\infty)$ and a Lebesgue null set $N\subset \R$
such that
\[
     g=\sum_{\nu=1}^\infty g_\nu,\quad\text{on $\R\setminus N$}.
\]

\begin{proof}[Proof of Claim 1]
Setting $G_n:=\sum_{\nu=1}^n g_\nu$
we obtain a pointwise monotone sequence
$G_n\le G_{n+1}$ since the $g_\nu$ are non-negative.
Define $G:\R\to[0,\infty]$ by
\[
     G(t):=\sum_{\nu=1}^\infty g_\nu(t),\quad\text{for $t\in\R$}.
\]
The Lebesgue monotone convergence theorem,
see e.g.~\cite[Thm.\,1.37]{Salamon:2016a}, asserts that the function
$G$ is measurable and provides the first step in
\[
     \int_\R G(t)\, dt=\lim_{n\to\infty}\int_\R G_n(t)\, dt
     =\lim_{n\to\infty} \int_\R \sum_{\nu=1}^n g_\nu(t)\, dt
     =\sum_{\nu=1}^\infty \underbrace{\int_\R g_\nu(t)\, dt}
     _{\le 2^{-\nu}}
    \le 1.
\]
The inequality uses~(\ref{eq:nkjnkj}).
By finiteness of the integral $G$ can only take on an infinite value
on a set $N$ of measure zero. Consequently the function defined by
\[
     g(t):=
     \begin{cases} 
       G(t)&\text{, $t\in\R\setminus N$,}\\
       0)&\text{, $t\in N$,}
     \end{cases}
\]
is pointwise finite. It is also measurable.
\end{proof}

\noindent 
\textbf{Claim 2.} 
Outside the null set $N$ from Claim~1, that is for $t\in\R\setminus
N$, the sequence $f_\nu(t)$ is Cauchy in $H$.

\begin{proof}[Proof of Claim 2]
Given $t\in\R\setminus N$, pick $\eps>0$. Because
$G(t)=\sum_{\nu=1}^\infty g_\nu(t)<\infty$ is finite, there is an index
$\nu_0=\nu_0(\eps)$ such that $\sum_{\nu=\nu_0}^\infty g_\nu(t)<\eps$.
Pick $\nu_2\ge\nu_1\ge\nu_0$. We estimate
\begin{equation*}
\begin{split}
     \Norm{\left(f_{\nu_2}-f_{\nu_1}\right) (t)}
   &=\Norm{\sum_{\nu=\nu_1}^{\nu_2-1}\left(f_{\nu+1}-f_{\nu}\right)(t)}\\
   &\le\sum_{\nu=\nu_1}^{\nu_2-1}
     \underbrace{\Norm{\left(f_{\nu+1}-f_{\nu}\right)(t)}}_{g_\nu(t)}\\
   &\le\sum_{\nu=\nu_0}^\infty g_\nu(t)\\
   &<\eps.
\end{split}
\end{equation*}
This proves Claim 2.
\end{proof}

Because $H$ is complete it follows from Claim 2
that for all $t\in\R\setminus N$ the limit
$\lim_{\nu\to\infty} f_\nu(t) \in H$ exists.
We obtain a function $f:\R\to H$ by defining
\[
     f(t):=
     \begin{cases} 
       \lim_{\nu\to\infty} f_\nu(t)&\text{, $t\in\R\setminus N$,}\\
       0&\text{, $t\in N$.}
     \end{cases}
\]
By Theorem~\ref{thm:Pettis} of Pettis measurability of $f:\R\to H$
is equivalent to the following claim.

\vspace{.2cm}
\noindent
\textbf{Claim 3.} (Measurability of $f:\R\to H$)
The function $\varphi_x:=\inner{f}{x} :(\R,\Aa)\to(\R,\Bb)$
is measurable $\forall x\in H$.

\begin{proof}[Proof of Claim 3]
Define the function $\varphi_{x,\nu}:\R\to\R$ by
$t\mapsto\inner{x}{f_\nu(t)}$.
Note that for every $t\in\R\setminus N$
one has $\lim_{\nu\to\infty}\varphi_{x,\nu}(t)=\varphi_{x}(t)$,
because $f_\nu(t)\to f(t)$ by definition of $f$.
Therefore $\varphi_x$ is up to the set $N$ of measure zero
the pointwise limit of a sequence of measurable functions
and hence itself measurable.
\end{proof}

\vspace{.2cm}
\noindent
\textbf{Claim 4.} (Convergence)
$\lim_{\nu\to\infty}\norm{f-f_\nu}_{\Ll^1(\R,H)}=0$.

\begin{proof}[Proof of Claim 4]
Given $\eps>0$, choose $\nu$ such that
$1/2^{\nu-1}<\eps$. 
Using Fatou's Lemma to obtain the first inequality
we estimate
\begin{equation*}
\begin{split}
     \Norm{f-f_\nu}_{\Ll^1(\R,H)}
   &=\int_\R \Norm{f_\nu(t)-f(t)} dt\\
   &=\int_\R \liminf_{k\to\infty} \Norm{f_\nu(t)-f_k(t)} dt\\
   &\le\liminf_{k\to\infty} \int_\R
     \underbrace{\Norm{f_\nu(t)-f_k(t)}}_{\le \sum_{j=\nu}^{k-1} \Norm{f_j(t)-f_{j+1}(t)}} dt\\
   &\le\liminf_{k\to\infty} \sum_{j=\nu}^{k-1} 
     \underbrace{
                         \int_\R\overbrace{\Norm{f_j(t)-f_{j+1}(t)}}^{=g_j(t)} dt
                         }_{\le 1/2^j}
     \\
   &\le\sum_{j=\nu}^\infty\frac{1}{2^j}
     =\frac{1}{2^{\nu-1}}\\
   &<\eps.
\end{split}
\end{equation*}
This proves Claim~4.
\end{proof}

By Claim~4 the limit $f$ is in $\Ll^1(\R,H)$
and $f_\nu\to f$ in $\Ll^1(\R,H)$.
This proves Proposition~\ref{prop:Ll(R,H)-complete}.
\end{proof}

On $\Ll^1(\R,H)$ consider the equivalence relation
$f\sim g$ if the two maps are equal outside a set of measure zero.
On the quotient space 
\[
     L^1(\R,H):=\Ll^1(\R,H)/\sim
\]
the semi-norm $\norm{\cdot}_1$ is a norm. Hence $L^1(\R,H)$
is a Banach space by Proposition~\ref{prop:Ll(R,H)-complete}.
By abuse of notation we denote the elements of $L^1(\R,H)$ still~by~$f$.

\subsection*{The Banach spaces \boldmath$L^p(\R,H)$}

Similarly for $p\in(1,\infty)$ one calls a Hilbert space valued
function $f:\R\to H$ \textbf{\boldmath$p$-integrable}
if it satisfies~(i) in Definition~\ref{def:integrable}
and~(ii) is replaced by finiteness of the $p$-semi-norm
\[
     \Norm{f}_p:=\Norm{f}_{\Ll^p(\R,H)}
    :=\left(\int_\R\Norm{f(t)}^p dt\right)^{\frac{1}{p}}.
\]
Let $\Ll^p(\R,H)$ be the set of all $p$-integrable
functions $f:\R\to H$.
As in Propositions~\ref{prop:vector-space}
and~\ref{prop:Ll(R,H)-complete} one shows that
$\Ll^p(\R,H)$ is a vector space which is complete with respect
to the $p$-semi-norm.
On the quotient space 
\[
     L^p(\R,H):=\Ll^p(\R,H)/\sim
\]
the semi-norm $\norm{\cdot}_p$ is a norm. Hence $L^p(\R,H)$
is a Banach space. Again we denote the elements $[f]$ of $L^p(\R,H)$
still by $f$.

\subsection*{The Sobolev space \boldmath$W^{1,p}(\R,H)$}

Fix $p\in[1,\infty)$ and let $W^{1,p}(\R,H)$ be the vector
space of all $f\in L^p(\R,H)$ for which there exists
an element $v\in L^p(\R,H)$ such that
\[
     \int_\R\INNER{f(t)}{\dot \varphi(t)} dt
     =-\int_\R\INNER{v(t)}{\varphi(t)} dt
\]
for every $\varphi\in C^\infty_{\rm c}(\R,H)$.
If such a map $v$ exists, then it is unique and called
the \textbf{weak derivative} of $f$.
We denote $v$ by the symbol $\dot f$ or $f^\prime$.
The vector space $W^{1,p}(\R,H)$ is endowed with the norm
$\norm{f}_{1,p}:=\norm{f}_p+\norm{\dot f}_p$.

\begin{proposition}\label{prop:Lp(R,H)-complete}
The space $W^{1,p}(\R,H)$ is a Banach space.
\end{proposition}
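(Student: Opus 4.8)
The plan is to prove completeness directly, leaning on the already-established fact that $L^p(\R,H)$ is a Banach space (the analogue of Proposition~\ref{prop:Ll(R,H)-complete} for exponent $p$). First I would take a Cauchy sequence $f_\nu$ in $W^{1,p}(\R,H)$. Since the norm splits as $\norm{f}_{1,p}=\norm{f}_p+\norm{\dot f}_p$, both $f_\nu$ and its weak derivatives $\dot f_\nu$ are Cauchy sequences in $L^p(\R,H)$. By completeness of $L^p(\R,H)$ there exist limits $f,v\in L^p(\R,H)$ with $f_\nu\TO f$ and $\dot f_\nu\TO v$, both convergences taken in the $L^p$-norm.

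The crux is to identify $v$ as the weak derivative of $f$; this simultaneously shows $f\in W^{1,p}(\R,H)$ and prepares the norm convergence. For this I would fix a test function $\varphi\in C^\infty_{\rm c}(\R,H)$ and pass to the limit in the defining identity
\[
     \int_\R\INNER{f_\nu(t)}{\dot\varphi(t)}\,dt
     =-\int_\R\INNER{\dot f_\nu(t)}{\varphi(t)}\,dt .
\]
The one step requiring justification is interchanging limit and integral on each side. The key estimate exploits that $\varphi$ is smooth with compact support, so both $\varphi$ and $\dot\varphi$ are bounded with compact support and hence lie in $L^{p'}(\R,H)$, where $p'$ is the conjugate exponent. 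Using the pointwise Cauchy--Schwarz inequality $\abs{\INNER{g(t)}{\dot\varphi(t)}}\le\norm{g(t)}\norm{\dot\varphi(t)}$ followed by Hölder's inequality,
\[
     \Abs{\int_\R\INNER{f_\nu-f}{\dot\varphi}\,dt}
     \le\Norm{f_\nu-f}_p\Norm{\dot\varphi}_{p'}\TO 0,
\]
and symmetrically the right-hand side converges to $-\int_\R\INNER{v}{\varphi}\,dt$. Hence $f$ admits $v$ as its weak derivative, so $f\in W^{1,p}(\R,H)$ and $\dot f=v$; uniqueness of the weak derivative (noted in the definition) makes this identification unambiguous. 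Then $\norm{f_\nu-f}_{1,p}=\norm{f_\nu-f}_p+\norm{\dot f_\nu-v}_p\TO 0$, which is exactly convergence of $f_\nu$ to $f$ in $W^{1,p}(\R,H)$.

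I do not expect a serious obstacle here, since the argument is the standard completeness proof transported to the Hilbert-valued setting. The only points meriting care are measurability of the integrands $\inner{f_\nu-f}{\dot\varphi}$, which is handled by Pettis' Theorem~\ref{thm:Pettis} together with the fact that continuous maps are measurable, and the endpoint case $p=1$: there the conjugate exponent is $p'=\infty$, and one simply replaces the Hölder bound by $\Norm{f_\nu-f}_1\Norm{\dot\varphi}_\infty$, using that $\dot\varphi$ is bounded with compact support. With these observations the estimate above goes through verbatim for every $p\in[1,\infty)$, completing the proof.
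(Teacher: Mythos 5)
Your proof is correct and follows essentially the same route as the paper: both reduce completeness to that of $L^p(\R,H)$, extract limits $f$ and $v$ of $f_\nu$ and $\dot f_\nu$, and identify $v=\dot f$ by passing to the limit in the integration-by-parts identity, with the limit interchange justified by a pointwise Cauchy--Schwarz bound followed by H\"older against the compactly supported test function (the paper isolates exactly this estimate as Lemma~\ref{le:bhjbjh}). No substantive differences.
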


\begin{proof}
Let $f_\nu$ by a Cauchy sequence in $W^{1,p}(\R,H)$.
Hence $f_\nu$ forms a Cauchy sequence in $L^p(\R,H)$
as well as the weak derivatives $\dot f_\nu$.
By completeness of $L^p(\R,H)$
there are elements $f,v\in L^p(\R,H)$ such that
\[
     f_\nu\stackrel{L^p}{\longrightarrow} f,\qquad
     \dot f_\nu\stackrel{L^p}{\longrightarrow} v.
\]
In view of Lemma~\ref{le:bhjbjh}
we compute
\begin{equation*}
\begin{split}
     \int_\R\INNER{f(t)}{\dot\varphi(t)} dt
   &=\lim_{\nu\to\infty}\int_\R\INNER{f_\nu(t)}{\dot\varphi(t)} dt\\
   &=-\lim_{\nu\to\infty}\int_\R\INNER{\dot f_\nu(t)}{\varphi(t)} dt\\
   &=-\int_\R\INNER{v(t)}{\varphi(t)} dt.
\end{split}
\end{equation*}
This shows that $v$ is the weak derivative of $f$.
Hence $f\in W^{1,p}(\R,H)$ and $f_\nu\to f$ in $W^{1,p}(\R,H)$.
This shows completeness and proves
Proposition~\ref{prop:Lp(R,H)-complete}.
\end{proof}

\begin{lemma}\label{le:bhjbjh}
Let $p\in[1,\infty)$.
Let $f_\nu\in L^p(\R,H)$ be a sequence
that converges to an element $f\in L^p(\R,H)$ and $\varphi\in
C^\infty_{\rm c}(\R,H)$ is of compact support. Then
\[
     \int_\R\INNER{f}{\varphi} dt
     =\lim_{\nu\to\infty}\int_\R\INNER{f_\nu}{\varphi} dt.
\]
\end{lemma}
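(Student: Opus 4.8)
The plan is to reduce the statement to a single H\"older-type estimate, exploiting that $\varphi$ has compact support. By linearity of the integral the difference of the two integrals equals $\int_\R\INNER{f-f_\nu}{\varphi}\,dt$, so it suffices to show that this quantity tends to $0$. First I would verify that the scalar integrand $t\mapsto\INNER{f(t)-f_\nu(t)}{\varphi(t)}$ is measurable: since $f$ and $f_\nu$ are measurable as maps $\R\to H$ by Pettis' Theorem~\ref{thm:Pettis} and $\varphi$ is continuous, hence measurable, the map $t\mapsto\bigl(f(t)-f_\nu(t),\varphi(t)\bigr)$ into $H\times H$ is measurable; composing with the continuous inner product $H\times H\to\R$ then yields measurability of the integrand.

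Next, the pointwise Cauchy--Schwarz inequality in $H$ gives
$\Abs{\INNER{f(t)-f_\nu(t)}{\varphi(t)}}\le\norm{f(t)-f_\nu(t)}\cdot\norm{\varphi(t)}$
for almost every $t$. Integrating and applying H\"older's inequality with the conjugate exponent $q\in(1,\infty]$ determined by $1/p+1/q=1$, I obtain
\[
     \Abs{\int_\R\INNER{f-f_\nu}{\varphi}\,dt}
     \le\int_\R\norm{f(t)-f_\nu(t)}\,\norm{\varphi(t)}\,dt
     \le\Norm{f-f_\nu}_{L^p(\R,H)}\,\Norm{\varphi}_{L^q(\R,H)}.
\]
Because $\varphi$ is continuous with compact support it is bounded, so $\Norm{\varphi}_{L^q(\R,H)}$ is finite; in the endpoint case $p=1$ one takes $q=\infty$ and uses $\Norm{\varphi}_{L^\infty}=\sup_{t}\norm{\varphi(t)}<\infty$ together with $\int_\R\norm{f-f_\nu}\,dt=\Norm{f-f_\nu}_{L^1(\R,H)}$. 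Letting $\nu\to\infty$ and invoking the hypothesis $\Norm{f-f_\nu}_{L^p(\R,H)}\to 0$ forces the left-hand side to zero, which is exactly the asserted convergence.

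There is no serious obstacle here; the proof is essentially the scalar argument carried over verbatim, with $\abs{\cdot}$ replaced by $\norm{\cdot}$ and the elementary inequality $\abs{ab}\le\abs a\abs b$ replaced by pointwise Cauchy--Schwarz in $H$. The only points deserving care are the measurability of the $H$-valued integrand, which is precisely where separability enters through Pettis' theorem and continuity of the inner product, and the boundary exponent $p=1$, where the H\"older pair degenerates to $(1,\infty)$ and one must appeal to the boundedness of $\varphi$ rather than to finiteness of an $L^q$-norm for some $q<\infty$.
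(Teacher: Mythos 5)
Your proof is correct and follows essentially the same route as the paper's: pointwise Cauchy--Schwarz in $H$ followed by H\"older, using the compact support and boundedness of $\varphi$ to make the $L^q$ factor finite. The only cosmetic difference is that the paper first bounds $\norm{\varphi(t)}$ by a constant $c$ on $[-T,T]$ and then applies H\"older to $1\cdot\norm{f-f_\nu}$, yielding the equivalent bound $c\,(2T)^{1/q}\Norm{f-f_\nu}_{L^p(\R,H)}$; your added remarks on measurability and the endpoint $p=1$ are fine but do not change the argument.
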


\begin{proof}
The support of $\varphi$ is contained in $[-T,T$ for $T>0$
sufficiently large. Moreover, there is a constant $c>0$ such that
$\norm{\varphi(t)}\le c$ for every $t\in\R$. Let $q$ be such that
$1/p+1/q=1$. We estimate
\begin{equation*}
\begin{split}
     \Abs{\int_\R\INNER{f(t)-f_\nu(t)}{\varphi} dt}
   &=\Abs{\int_{-T}^T\INNER{f(t)-f_\nu(t)}{\varphi(t)} dt}\\
   &\le\int_{-T}^T\Norm{f(t)-f_\nu(t)}\cdot\Norm{\varphi(t)} dt\\
   &\le c\int_{-T}^T 1\cdot\Norm{f(t)-f_\nu(t)} dt\\
   &\le c (2T)^{1/q}
     \left(\int_{-T}^T \Norm{f(t)-f_\nu(t)}^p dt\right)^{1/p}\\
   &\le c (2T)^{1/q}\Norm{f-f_\nu}_{L^p(\R,H)}.
\end{split}
\end{equation*}
The first inequality uses the Cauchy-Schwarz inequality
in the Hilbert space $H$.
The third inequality uses H\"older for real valued functions.
\end{proof}

\subsection*{The Sobolev spaces \boldmath$W^{k,p}(\R,H)$}

Recursively, for $k\in\N$,
we define $W^{k+1,p}(\R,H)$
to be the space of all functions $f\in W^{1,p}(\R,H)$
whose weak derivative $\dot f$ lies in $W^{k,p}(\R,H)$.
The vector space $W^{k+1,p}(\R,H)$ is endowed with the norm
$\norm{f}_{k+1,p}:=\norm{f}_p+\norm{\dot f}_{k,p}$.
Using the argument in the proof of
Proposition~\ref{prop:Lp(R,H)-complete}
inductively we obtain that $W^{k+1,p}(\R,H)$ is a Banach space.

\begin{proposition}\label{prop:Wkp(R,H)-complete}
The space $W^{k,p}(\R,H)$ is a Banach space
whenever $k\in\N_0$ and $p\in[1,\infty)$.
\end{proposition}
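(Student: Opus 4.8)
The plan is to argue by induction on $k$, mirroring the proof of Proposition~\ref{prop:Lp(R,H)-complete} one level higher up in the recursion. For the base case $k=0$ the space $W^{0,p}(\R,H)=L^p(\R,H)$ is complete by the $L^p$-analogue of Proposition~\ref{prop:Ll(R,H)-complete}, and $k=1$ is precisely Proposition~\ref{prop:Lp(R,H)-complete}. So I would assume that $W^{k,p}(\R,H)$ is a Banach space and deduce the same for $W^{k+1,p}(\R,H)$, which by definition consists of those $f\in W^{1,p}(\R,H)$ whose weak derivative $\dot f$ lies in $W^{k,p}(\R,H)$, normed by $\norm{f}_{k+1,p}=\norm{f}_p+\norm{\dot f}_{k,p}$.

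Starting from a Cauchy sequence $f_\nu$ in $W^{k+1,p}(\R,H)$, the two summands of the norm immediately show that $f_\nu$ is Cauchy in $L^p(\R,H)$ and that the weak derivatives $\dot f_\nu$ form a Cauchy sequence in $W^{k,p}(\R,H)$. Completeness of $L^p(\R,H)$ then yields a limit $f\in L^p(\R,H)$ with $f_\nu\to f$, and the induction hypothesis yields $v\in W^{k,p}(\R,H)$ with $\dot f_\nu\to v$ in $W^{k,p}$, hence in particular $\dot f_\nu\to v$ in $L^p(\R,H)$.

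The one point that needs genuine work is identifying $v$ as the weak derivative of $f$; everything else is bookkeeping of the recursion. Here I would invoke Lemma~\ref{le:bhjbjh} exactly as in the $k=1$ step: for each test function $\varphi\in C^\infty_{\rm c}(\R,H)$,
\[
     \int_\R\INNER{f}{\dot\varphi}\,dt
   =\lim_{\nu\to\infty}\int_\R\INNER{f_\nu}{\dot\varphi}\,dt
   =-\lim_{\nu\to\infty}\int_\R\INNER{\dot f_\nu}{\varphi}\,dt
   =-\int_\R\INNER{v}{\varphi}\,dt,
\]
where the outer two equalities come from $L^p$-convergence of $f_\nu$ and of $\dot f_\nu$ combined with Lemma~\ref{le:bhjbjh}, and the middle equality is the defining property of the weak derivative $\dot f_\nu$. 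Thus $\dot f=v$ exists, so $f\in W^{1,p}(\R,H)$, and since $\dot f=v$ lies in $W^{k,p}(\R,H)$ the recursive definition places $f$ in $W^{k+1,p}(\R,H)$.

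Finally I would read off convergence in the correct norm, namely $\norm{f_\nu-f}_{k+1,p}=\norm{f_\nu-f}_p+\norm{\dot f_\nu-v}_{k,p}\to 0$, which follows from the two convergences already established. This closes the induction step. The main obstacle is less analytic than organizational: one must take convergence of the derivatives in the stronger $W^{k,p}$-norm rather than merely in $L^p$, so that the limit actually lands in $W^{k+1,p}$ and the claimed norm convergence holds; the genuine analytic content is entirely packaged into Lemma~\ref{le:bhjbjh}, so no new estimate is required beyond the base case.
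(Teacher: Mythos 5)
Your proof is correct and follows exactly the route the paper takes: the paper's own proof is the single remark that one applies the argument of Proposition~\ref{prop:Lp(R,H)-complete} inductively, and your write-up simply spells out that induction, including the identification of the limit's weak derivative via Lemma~\ref{le:bhjbjh}. No discrepancies.
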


%


\bibliographystyle{alpha}
\addcontentsline{toc}{section}{References}
\bibliography{$HOME/Dropbox/0-Libraries+app-data/Bibdesk-BibFiles/library_math}{}

%


\end{document}